\newcommand{\Ueberschrift}{Convex Fujita numbers: projective bundles}
\newcommand{\Kurztitel}{Convex Fujita numbers: projective bundles}
\DeclareMathOperator{\rH}{H}
\DeclareMathOperator{\rM}{M}
\DeclareMathOperator{\rN}{N}
\DeclareMathOperator{\rc}{c}
\newcommand{\bC}{{\mathbb C}}
\newcommand{\bP}{{\mathbb P}}
\newcommand{\bQ}{{\mathbb Q}}
\newcommand{\bR}{{\mathbb R}}
\newcommand{\bZ}{{\mathbb Z}}
\newcommand{\cE}{{\mathscr E}}
\newcommand{\cF}{{\mathscr F}}
\newcommand{\cG}{{\mathscr G}}
\newcommand{\cH}{{\mathscr H}}
\newcommand{\cL}{{\mathscr L}}
\newcommand{\cM}{{\mathscr M}}
\newcommand{\cO}{{\mathscr O}}
\newcommand{\cU}{{\mathscr U}}
\newcommand{\dO}{{\mathcal O}}
\newcommand{\surj}{\twoheadrightarrow}
\DeclareMathOperator{\pr}{pr}
\DeclareMathOperator{\End}{End}
\DeclareMathOperator{\GL}{GL}
\DeclareMathOperator{\PGL}{PGL}
\DeclareMathOperator{\SL}{SL}
\DeclareMathOperator{\Sp}{Sp}
\DeclareMathOperator{\Nef}{Nef}
\DeclareMathOperator{\Pic}{Pic}
\newcommand{\norm}{{\rm norm}}
\DeclareMathOperator{\Alb}{Alb}
\DeclareMathOperator{\NS}{NS}
\DeclareMathOperator{\Ext}{Ext}
\DeclareMathOperator{\cHom}{\cH\mathit{om}}
\DeclareMathOperator{\res}{res}
\DeclareMathOperator{\ch}{ch}
\newcommand{\tors}{{\rm tors}}
\DeclareMathOperator{\Sym}{Sym}
\DeclareMathOperator{\rank}{rk}
\newcommand{\Albanesefinite}{of abelian Picard type} 
\newtheorem{thm}{Theorem}[section]
\newtheorem{prop}[thm]{Proposition}
\newtheorem{lem}[thm]{Lemma}
\newtheorem{cor}[thm]{Corollary}
\newtheorem{thmABC}{Theorem}
\theoremstyle{definition}
\newtheorem{defi}[thm]{Definition}
\theoremstyle{remark}
\newtheorem{rmk}[thm]{Remark}
\newtheorem{rmks}[thm]{Remarks}
\newtheorem{ex}[thm]{Example}
\newenvironment{pro*}[1][Proof]{{\it{#1:}} }{}
\newenvironment{pro**}[1][]{{\it{#1}} }{\hfill $\square$}
\numberwithin{equation}{section}
\newcommand{\tref}[1]{Theorem~\ref{#1}}
\newcommand{\cref}[1]{Corollary~\ref{#1}}
\newcommand{\dref}[1]{Definition~\ref{#1}}
\newcommand{\lref}[1]{Lemma~\ref{#1}}
\newcommand{\pref}[1]{Proposition~\ref{#1}}
\def\mc{\mathscr}
\def\rank{\text{rank}}
\def\pr{\text{pr}}
\def\P{\mathbb{P}}
\def\bP{\P}
\def\cE{\mc{E}}
\def\cF{\mc{F}}
\def\cG{\mc{G}}
\def\cH{\mc{H}}
\def\cL{\mc{L}}
\def\cM{\mc{M}}
\def\cO{\mc{O}}
\def\cU{\mc{U}}
\DeclareMathOperator{\conFN}{Fu} 
\definecolor{intOrange}{rgb}{1.0,.310,.0}
\begin{document}

\hrule width\hsize

\vskip 0.5cm

\title[\Kurztitel]{\Ueberschrift} 

\author{Jiaming Chen}
\address{Jiaming Chen, Institut f\"ur Mathematik, Goethe--Universit\"at Frankfurt, Robert-Mayer-Stra\ss e {6--8},
60325~Frankfurt am Main, Germany} 
\email{\tt chen@math.uni-frankfurt.de}

\author{Alex K\"{u}ronya}
\address{Alex K\"uronya, Institut f\"ur Mathematik, Goethe--Universit\"at Frankfurt, Robert-Mayer-Stra\ss e {6--8},
60325~Frankfurt am Main, Germany} 
\email{\tt kuronya@math.uni-frankfurt.de}

\author{Yusuf Mustopa}
\address{Yusuf Mustopa, University of Massachusetts Boston, Department of Mathematics, Wheatley Hall, 100 William T Morrissey Blvd, Boston, MA 02125, USA}
\email{Yusuf.Mustopa@umb.edu}

\author{Jakob Stix}
\address{Jakob Stix, Institut f\"ur Mathematik, Goethe--Universit\"at Frankfurt, Robert-Mayer-Stra\ss e {6--8},
60325~Frankfurt am Main, Germany} 
\email{\tt stix@math.uni-frankfurt.de} 
	
\thanks{The authors acknowledge support by Deutsche Forschungsgemeinschaft  (DFG) through the Collaborative Research Centre TRR 326 "Geometry and Arithmetic of Uniformized Structures", project number 444845124.}
	
\maketitle

\date{\today} 

\maketitle

\begin{quotation} 
\noindent \small {\bf Abstract} --- We study effective global generation properties of projectivizations of  curve semistable vector bundles over curves and abelian varieties. 
\end{quotation}

\DeclareRobustCommand{\SkipTocEntry}[5]{}
\setcounter{tocdepth}{1} {\scriptsize \tableofcontents}

\section{Introduction}
\label{sec:intro}
\subsection{Motivation} 

This work is a natural continuation of our earlier efforts to study effective global generation of line bundles beyond the precision provided by Fujita's conjecture. While the articles \cite{ChenKuronyaMustopaStix2023:ConvexFujitaFundgp} and \cite{ChenKuronyaMustopaStix2023:ConvexFujitaSurfaces} studied possible connections with the fundamental group of the underlying variety, and the correlation with the Kodaira--Enriques classification of surfaces, here we contemplate the situation for projective bundles associated with curve semistable vector bundles. 

The circle of ideas around Fujita's freeness conjecture originated in the article \cite{Fujita1987:PolarizedManifoldsWhose}, where the author proved that, given a smooth complex projective variety $X$, any line bundle of the form $\omega_X\otimes \cL^{m}$ is nef where $\cL$ is an ample line bundle, and $m\geq \dim\,X+1$. At the same time, he conjectured what later came to be known as Fujita's freeness conjecture, namely, that the line bundles $\omega_X\otimes \cL^{m}$ (for $\cL$ ample and $m\geq \dim\,X+1$) are globally generated as well. 

While the conjecture follows quickly from Riemann--Roch on curves, it is non-trivial in higher dimensions, and has given rise to a lot of novel mathematics. It  has been demonstrated for $\dim(X)\leq 6$ \cites{Reider1988:VectorBundlesRank,EinLazarsfeld1993:GlobalGenerationPluricanonical,Kawamata1997:FujitaFreenessConjecture,Helmke1997:FujitaConjecture,YeZhu2020:FujitaFreenessConjecture,GhidelliLacini2021:LogarithmicBoundsFujita}. There exist analogous suboptimal results valid in all dimensions,  due to Angehrn--Siu \cite{AngehrnSiu1995:EffectiveFreenessPoint}, Heier \cite{Heier2002:EffectiveFreenessAdjoint},  and Ghidelli and Lacini \cite{GhidelliLacini2021:LogarithmicBoundsFujita}. It is important to point out  that although these bounds are uniform, they are  not linear in $\dim(X)$. 

There undoubtedly exist polarized projective varieties with extremal behaviour from the point of view of Fujita's conjecture (take projective spaces for one), at the same time,  one often meets cases where a substantially smaller multiple will suffice. Hence, there is in general room for improvement, and a need for a more precise analysis. 

Convex Fujita numbers have been introduced to this end in \cite{ChenKuronyaMustopaStix2023:ConvexFujitaFundgp} as a  measure of effective positivity of line bundles on smooth complex projective varieties.  Recall that the convex Fujita number of a variety $X$, which we denote by $\conFN(X),$ is the minimal $m \geq 0$ such that  for all $t \geq m$ and any ample line bundles $\cL_1, \ldots, \cL_t$ on $X$ the adjoint bundle 
\[
\omega_X \otimes \cL_1 \otimes \ldots \otimes \cL_t
\]
is globally generated.  It follows from \cite{AngehrnSiu1995:EffectiveFreenessPoint}*{Thm~0.1} 
that $\conFN(X)$ is always finite, see \cite{ChenKuronyaMustopaStix2023:ConvexFujitaFundgp}*{Prop.~2.5}. 

Our first remark is that most of the major general results mentioned above remain true in their convex formulation (see \cite{ChenKuronyaMustopaStix2023:ConvexFujitaFundgp}*{Section 2}) including the classical theorems of Reider, Helmke, Kawamata, and Angehrn--Siu. The introduction of convex Fujita numbers also led to relations between various configurations of polarized varieties. 
For instance, for varieties  $X$ and $Y$, one has 
\[
\conFN(X \times Y) \geq \max\{\conFN(X), \conFN(Y) \} 
\] 
with equality if  $\Pic^0_X$ and $\Pic^0_Y$ have no common nontrivial isogeny factor.

For each $n \geq 1$ the simplest variety of dimension $n$ with the largest convex Fujita number predicted by the Fujita freeness conjecture is projective space.  On the other hand, we know that $\conFN(X) = 2$ when $X$ is a curve by Riemann-Roch and that ${\rm Fu}(X) \leq 2$ when $X$ is an abelian variety essentially by theorems of Lefschetz.  

In this article we will  focus on a class of varieties that lies in between these examples, namely projectivized vector bundles mainly over curves and abelian varieties. 
One of the key points is to identify a class of vector bundles that resemble line bundles closely. The class of vector bundles  we work with, which we call \emph{curve semistable}, has been studied from the point of view of its differential geometry or semistability, but, as of now, their relation to effective positivity has not been investigated.

\subsection{Results} 
As far as effective global generation of projectivized vector bundles go, one can quickly  check that if $X =\bP(\cE)$  for  a vector bundle $\cE$ of rank $r$ on a smooth projective variety $S$, then $\conFN(X) \geq r$, see \pref{prop:FNonPElowerbound}.  On the other hand, Fujita's freeness conjecture hints at $\conFN(X) \leq r+\dim{S}$.  Our first result addresses the case $\dim{S} = 1$.

\begin{thmABC}[see \tref{thm:FNonPEuppperbound} and \tref{thm:FNofPE Yusufs result}]
\label{thmABC:PEoverCurves}
	Let $C$ be a smooth projective curve of genus $g \geq 2,$ let $\cE$ be a vector bundle of rank $r \geq 2$ and degree $d$ on $C,$ and let $\bP(\cE)$ be its projectivization.  Then we have the following:
	\begin{enumerate} [label=(\arabic*), ref = (\arabic*)]
		\item
		$r \leq \conFN(\bP(\cE)) \leq r+1$.
		\item
		If $\cE$ is not stable, or if it is stable with $(r,d) \neq 1$, then $\conFN(\bP(\cE)) = r$.
		\item
		\label{thmitem:A3}
		If $\cE$ is stable and $(r,d)=1$,  then:
				\begin{enumerate}[label=(\roman*)]
					\item
					\label{thmitem:A3i}
					If $d \nequiv 1~(\rm{mod }~r)$ and $\cE$ is very general in the moduli space $\cU_{C}(r,d)$ parametrizing stable vector bundles of rank $r$ and degree $d$ on $C$, then $\conFN(\bP(\cE)) = r$. 
					\item 
					If $d \equiv 1~(\rm{mod }~r),$ then $\conFN(\bP(\cE)) = r+1$.
				\end{enumerate}
	\end{enumerate}
\end{thmABC}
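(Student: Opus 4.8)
The plan is to reduce global generation of the adjoint bundle on $\bP(\cE)$ to a global generation question for a twisted symmetric power on $C$, and then to settle everything by a slope computation governed by the residue $d\bmod r$. Write $\pi\colon X=\bP(\cE)\to C$ for the projection with $\pi_*\cO_X(1)=\cE$, so that $\omega_X=\cO_X(-r)\otimes\pi^*(\det\cE\otimes\omega_C)$ and $\Pic(X)=\bZ\,\cO_X(1)\oplus\pi^*\Pic(C)$. An ample line bundle on $X$ has the form $\cL_i=\cO_X(a_i)\otimes\pi^*M_i$ with $a_i\ge1$ and $\deg M_i>-a_i\,\mu_{\min}(\cE)$, where $\mu_{\min}(\cE)$ is the least Harder--Narasimhan slope of $\cE$. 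For such bundles the adjoint is
\[
A\ :=\ \omega_X\otimes\cL_1\otimes\cdots\otimes\cL_t\ =\ \cO_X(k)\otimes\pi^*N,\qquad k=\textstyle\sum_i a_i-r,\quad N=\det\cE\otimes\omega_C\otimes\bigotimes_i M_i.
\]
First I would record that for $k\ge0$ the evaluation $\pi^*\pi_*A\to A$ is the canonical surjection $\pi^*\Sym^k\cE\to\cO_X(k)$ twisted by $\pi^*N$; hence $A$ is globally generated once $\pi_*A=\Sym^k\cE\otimes N$ is globally generated on $C$, while for $k=0$ one has $A=\pi^*N$, globally generated iff $N$ is. By the bounds of part~(1) it then suffices to analyse $t=r$, configurations with $t>r$ carrying enough extra positivity to be globally generated automatically.

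\emph{The slope count.} For semistable $\cE$ the bundle $\pi_*A$ is semistable, and a direct computation (using the minimal ample values of $\deg M_i$, which depend only on $a_i\bmod r$) shows that the minimum of $\mu(\pi_*A)$ over all ample configurations at $t=r$ equals $2g-2+\gcd(r,d)$, attained precisely by the configurations with $a_i d\equiv\gcd(r,d)\pmod r$ for all $i$. Since a vector bundle on $C$ with least slope $>2g-1$ is globally generated, this yields $\conFN(\bP(\cE))=r$ whenever $\gcd(r,d)\ge2$. The analogous count for unstable $\cE$, where ampleness forces $\deg M_i>-a_i\mu_{\min}(\cE)$ with $\mu_{\min}(\cE)<d/r$, gives $\mu_{\min}(\pi_*A)>2g-1$ for every configuration, hence $\conFN(\bP(\cE))=r$ there as well; together with the fact that $\gcd(r,d)=1$ forces semistable${}={}$stable, this settles part~(2). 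There remains the stable, coprime case, where the minimum slope is exactly $2g-1$, attained precisely by the configurations with every $a_i\equiv e\pmod r$, $e:=d^{-1}\bmod r$.

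\emph{The borderline.} For these configurations $F:=\pi_*A$ is semistable of slope exactly $2g-1$, and I would use that such $F$ is globally generated iff it admits no nonzero map to any $\omega_C(p)$ --- a quotient line bundle of slope $2g-1$ --- via $H^1(F(-p))^\vee\cong\Hom(F,\omega_C(p))$ and semistability. As the $M_i$ vary, $N$ sweeps out all line bundles of its degree. When $e=1$, i.e.\ $d\equiv1\pmod r$, the minimal configuration has $k=0$ and $\Sym^0\cE=\cO_C$, so choosing $N=\omega_C(q)$ (degree $2g-1$, with base point $q$) makes $A=\pi^*N$ non--globally generated; hence $\conFN(\bP(\cE))>r$, and the upper bound forces the value $r+1$, proving~(3)(ii). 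When $e\ge2$, i.e.\ $d\not\equiv1\pmod r$, every borderline configuration has $k=r\ell$ with $\ell\ge e-1\ge1$, so $\Sym^k\cE$ has rank $>1$; if $\Sym^k\cE$ is stable it has no quotient line bundle of its own slope $kd/r$, whence $F$ is globally generated for every $N$ and $\conFN(\bP(\cE))=r$.

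\emph{The main obstacle.} The crux is thus the \emph{stability of symmetric powers}: for very general $\cE\in\cU_C(r,d)$, $\Sym^k\cE$ is stable for all $k\ge1$. For each fixed $k$ this is an open condition on $\cU_C(r,d)$, so the real work is non-emptiness --- producing a single stable $\cE$ with $\Sym^k\cE$ stable --- after which the non-stable locus is a proper closed subset and the \emph{very general} hypothesis (a countable intersection over $k$) removes the countable union of these loci. I expect this to be the hard part: one must control $\Sym^k\cE$ for a single bundle $\cE$ and all $k$ simultaneously, rather than permitting a $k$-dependent general bundle, and it is exactly this that forces ``very general'' rather than ``general'' in~(3)(i). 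I would attempt non-emptiness either through known (semi)stability results for symmetric and tensor powers of general stable bundles in characteristic zero, or by degenerating to a bundle whose symmetric powers split into explicitly analysable pieces and then invoking the openness of stability.
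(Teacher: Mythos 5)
Your main analysis mirrors the paper's proof closely, but there is one genuine gap: you never establish the lower bound $\conFN(\bP(\cE))\geq r$, and every equality claimed in parts (1), (2) and (3)(i) depends on it. Your phrase ``by the bounds of part~(1)'' is circular, since part (1) is itself part of the statement to be proved, and your slope analysis only ever produces global generation statements, i.e.\ upper bounds. The missing step is short but indispensable (it is the paper's Proposition~3.1): choose $\cM$ ample on $C$ with $\cE\otimes\cM$ an ample vector bundle and set $\cL=\cO_{\bP(\cE)}(1)\otimes\pi^*\cM$; then
\[
\omega_{\bP(\cE)}\otimes\cL^{\otimes(r-1)}\;\cong\;\cO_{\bP(\cE)}(-1)\otimes\pi^*\bigl(\omega_C\otimes\det(\cE)\otimes\cM^{\otimes(r-1)}\bigr)
\]
restricts to $\cO_{\bP^{r-1}}(-1)$ on every fibre, hence is not globally generated, so $r-1$ ample factors never suffice. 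Only part (3)(ii) is immune to this omission, because there your explicit configuration with $t=r$ directly yields $\conFN(\bP(\cE))\geq r+1$.

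Once that is added, your route is essentially the paper's: the same reduction via the relative evaluation surjection $\pi^*\pi_*(\omega_X\otimes\cL)\surj\omega_X\otimes\cL$ (the paper's Lemma~3.2); the same slope count against the Miyaoka--Butler ampleness criterion, where the paper's condition $\mu^-(\cF)>1$ for $\cF=\Sym^{a-r}(\cE)\otimes\det(\cE)\otimes\cM$ is exactly your $\mu_{\min}(\pi_*A)>2g-1$ after twisting by $\omega_C$; the same identification of the critical configurations $a_id\equiv 1\pmod r$; and the same counterexample $\pi^*\omega_C(P)$ when $d\equiv 1\pmod r$. Two remarks on the step you flag as the main obstacle. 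First, the paper needs less than stability of $\Sym^{r\ell}(\cE)$: by Ramanan--Ramanathan, symmetric powers of a stable bundle are polystable for \emph{every} stable $\cE$, so a global section of the slope-zero polystable bundle $\cF^\vee(P)$ would split off a trivial line-bundle direct summand; the genericity hypothesis can therefore be weakened to ``$\Sym^{rk}(\cE)$ has no line-bundle direct summand for all $k>0$''. Second, the non-emptiness you worry about is handled exactly as you propose, by citation: Seshadri's theorem (Hartshorne, \emph{Ample subvarieties}, Theorem~10.5) gives a stable $\cE$ whose symmetric powers are all stable, and Balaji--Koll\'ar's holonomy criterion gives another sufficient condition; openness for each fixed $k$ plus a countable intersection is then precisely where ``very general'' enters. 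So that part of your plan is sound and completable from the literature you gesture at.
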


Most of \tref{thmABC:PEoverCurves} appears 
in \cite[Theorem~3.5]{ChenKuronyaMustopaStix2023:ConvexFujitaSurfaces} and is restated for completeness and comparison with the case of an abelian variety as the base. The new part here is part  \ref{thmitem:A3i} of \ref{thmitem:A3}, the genericity condition of which is quite explicit  in our proof of \tref{thm:FNofPE Yusufs result}. Note that if $r=1$, \tref{thmABC:PEoverCurves} simply states that curves $C$ have convex Fujita number $\conFN(C) = 2$, and that is an easy consequence of Riemann-Roch.

Our next result concerns semihomogeneous vector bundles over abelian varieties.  Note that the hypothesis of the following result is satisfied for a very general polarized abelian variety of fixed dimension and polarization type; moreover, the given upper bound on the convex Fujita number is at least as low as that predicted by the Fujita freeness conjecture. The slope class $\mu(\cE)$ is defined in \dref{defi:slopeclass etc}.

\begin{thmABC}[see \tref{thm:proj-semihom}]
\label{thmABC:sh on AVs}
	Let $A$ be an abelian variety of dimension $g \geq 1$, and let $\cE$ be a 
	semihomogeneous 
	vector bundle on $A$ of rank $r \geq 2$.  Then the convex Fujita number of $X = \bP(\cE)$ satisfies
	\[
	r \leq \conFN(\bP(\cE)) \leq r+1.
	\]
	More precisely, the following holds.
	\begin{enumerate}[align=left,labelindent=0pt,leftmargin=*]
	\item 
	If there is an integer $0 < r' < r$ such that $r' \mu(\cE)$ is the class of a divisor, then $\conFN(\bP(\cE)) = r$.
	\item
	If $\conFN(A) = 0$, i.e. all ample line bundles on $A$ are globally generated, then $\conFN(\bP(\cE)) = r$.
	\item 
	If there is a line bundle $\cM$ on $A$ such that $\det(\cE) \otimes \cM^{\otimes r}$ is ample but not globally generated, then $\conFN(\bP(\cE)) = r+1$.
\end{enumerate}	
\end{thmABC}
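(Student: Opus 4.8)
The plan is to fiber everything over $A$ via $\pi\colon X=\bP(\cE)\to A$ and translate global generation of adjoint bundles into global generation of semihomogeneous bundles on $A$. Since $\omega_A=\cO_A$, the relative canonical bundle formula gives $\omega_X=\cO_X(-r)\otimes\pi^*\det\cE$. Writing each ample line bundle as $\cL_i=\cO_X(a_i)\otimes\pi^*N_i$ and setting $a=\sum_i a_i\ge t$, $N=\bigotimes_i N_i$ and $M=\det\cE\otimes N$, I would compute
\[
\omega_X\otimes\textstyle\bigotimes_{i=1}^t\cL_i \;=\; \cO_X(a-r)\otimes\pi^*M .
\]
For semihomogeneous $\cE$ the relative hyperplane class $\xi=c_1(\cO_X(1))$ satisfies $(\xi-\pi^*\mu(\cE))^r=0$, and I would record the resulting ample dictionary: $\cO_X(b)\otimes\pi^*N'$ is ample iff $b>0$ and $b\,\mu(\cE)+c_1(N')$ is ample on $A$. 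Ampleness of the $\cL_i$ then forces each $a_i\mu(\cE)+c_1(N_i)$ to be ample, hence $a\,\mu(\cE)+c_1(N)$ is ample as a sum of ample classes. Using that $\Sym^{m}\cE\otimes M$ is again semihomogeneous of slope $a\,\mu(\cE)+c_1(N)$, I would reduce, via the standard fact that $\cO_X(m)\otimes\pi^*M$ (for $m\ge 0$) is globally generated as soon as $\Sym^m\cE\otimes M$ is globally generated on $A$ (and for $m=0$ that $\pi^*M$ is globally generated iff $M$ is), to a single question: when is a semihomogeneous bundle of ample slope globally generated?

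For that question I would invoke Pareschi--Popa regularity. A semihomogeneous bundle of ample slope satisfies $\mathrm{IT}_0$ and is therefore M-regular, hence continuously globally generated; and the product of two continuously globally generated sheaves is globally generated. Thus if the slope $a\,\mu(\cE)+c_1(N)$ can be written as $\alpha+\beta$ with $\alpha$ the class of an integral ample line bundle $L_\alpha$ and $\beta$ ample, then $\Sym^m\cE\otimes M\otimes L_\alpha^{-1}$ has ample slope $\beta$, is continuously globally generated, and tensoring with the continuously globally generated $L_\alpha$ yields global generation. I would then reduce the existence of such a splitting to a purely numerical statement about the denominator $s$ of $\mu(\cE)$ (the least $s>0$ with $s\,\mu(\cE)\in\NS(A)$; note $s\mid r$ because $r\,\mu(\cE)=c_1(\det\cE)$): a proper nonempty subset of the ample summands $a_i\mu(\cE)+c_1(N_i)$ has integral sum precisely when the corresponding partial sums of the $a_i$ hit the residue $0 \bmod s$.

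The upper bound and parts (1)--(2) would then follow from a pigeonhole count on the $t+1$ partial sums of $a_1,\dots,a_t$ modulo $s$: once $t\ge s+1$ a proper integral ample summand splits off, so $\Sym^m\cE\otimes M$ is globally generated regardless of the individual $a_i$. Since $s\le r$, this gives $\conFN(\bP(\cE))\le r+1$. For part (1), $s<r$ forces $s\le r-1$, so the splitting exists for every $t\ge r$, giving $\conFN(\bP(\cE))\le r$, which with \pref{prop:FNonPElowerbound} yields $\conFN(\bP(\cE))=r$. For part (2) the same count settles every case with $t\ge r+1$, as well as $t=r$ whenever a split exists; the crucial remaining case is the minimal one $t=r$ with all $a_i=1$, where $m=0$ and the relevant bundle is the ample line bundle $M$ itself, and here $\conFN(A)=0$ supplies global generation directly. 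Part (3) is the lower bound: choosing $\cL_i=\cO_X(1)\otimes\pi^*\cM$ for $i=1,\dots,r$, which are ample exactly because $\det\cE\otimes\cM^{\otimes r}$ is ample, collapses the adjoint bundle to $\pi^*(\det\cE\otimes\cM^{\otimes r})$; since $\pi^*$ of a line bundle is globally generated iff the line bundle is, this fails to be globally generated, whence $\conFN(\bP(\cE))\ge r+1$, and equality follows from the upper bound.

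The main obstacle I anticipate is precisely the global generation criterion for semihomogeneous bundles in the borderline minimal-slope regime. Bridging the Pareschi--Popa gap between continuous and honest global generation requires peeling off an integral ample line bundle while keeping the residual slope ample, and when the denominator $s$ equals $r$ this is exactly what can fail; this is where the hypothesis $\conFN(A)=0$ in (2) and the explicit non-generation input of (3) become indispensable. I would expect the remaining combinatorial corners for part (2) (namely $t=r$ with some $a_i\ge 2$ when $s=r$) to be resolved using $\conFN(A)=0$ together with the sharper structural form of Mukai's description of semihomogeneous bundles as pushforwards of line bundles under isogenies, rather than the regularity argument alone.
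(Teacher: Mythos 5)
Your overall route is the same as the paper's: the canonical bundle formula plus the pushforward reduce everything to global generation of $\cF=\Sym^{a-r}(\cE)\otimes\det(\cE)\otimes\cM$ on $A$ (\lref{lem:FNonPEuppperboundCriterion}), your ample dictionary is \pref{prop:NEFconePE}, your pigeonhole on partial sums modulo the denominator of $\mu(\cE)$ is \pref{prop:positivity of F}, your Pareschi--Popa step is \lref{lem:semihom-gg}, and your part (3) is verbatim the paper's argument. The genuine gap is in part (2), and it is exactly the one you flag but do not close: when $t=r$, the denominator of $\mu(\cE)$ equals $r$, and all $a_i$ are pairwise congruent modulo $r$ and coprime to $r$ but not all equal to $1$ (e.g.\ $r=2$ with $a_1=1,a_2=3$, or $r=3$ with $a_1=a_2=a_3=2$). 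Your sentence that ``the crucial remaining case is the minimal one $t=r$ with all $a_i=1$'' is therefore incorrect as stated. In these cases $a>r$, so $\cF$ has rank bigger than $1$; its slope is the integral ample class of $\Theta=\det(\cE)^{\otimes a/r}\otimes\cM$, no proper subset of the ample summands has integral sum, so (when $\Theta$ is not a product of two ample line bundles) the continuous-global-generation argument breaks down, and $\conFN(A)=0$ cannot be invoked directly since $\cF$ is not a line bundle. Your proposed repair via Mukai's isogeny description is not carried out, and it is doubtful as stated: writing a stable summand as $p_\ast(L)$ for an isogeny $p\colon B\to A$ trades the problem for positivity questions on $B$, about which $\conFN(A)=0$ says nothing (an isogenous abelian variety can carry ample non--globally generated line bundles, and $p_\ast$ of a globally generated line bundle need not be globally generated, cf.\ $p_\ast\dO_B$).

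The paper closes this case by a structure theorem instead. In the critical case the denominator of $\mu(\cE)$ is $r$, so $\cE$ is stable; hence $\Sym^{a-r}(\cE)$ is polystable by Ramanan--Ramanathan (\cite[3.2.11]{HuybrechtsLehn2010:GeometryModuliSpaces}, \cite{RamananRamanathan1984:RemarksInstabilityFlag}), and therefore so is $\cF$. On the other hand, $\cF(-\Theta)$ is curve semistable with numerically trivial slope class, hence by \cite[Thm.~4.17]{Miyaoka1987:ChernClassesKodaira} it is an iterated extension of numerically trivial line bundles. Polystability together with this filtration forces $\cF\cong\bigoplus_\alpha \cF_\alpha$ to be a direct sum of \emph{line} bundles numerically equivalent to $\Theta$, in particular ample; now the hypothesis $\conFN(A)=0$ applies summand by summand and yields global generation of $\cF$, hence of the adjoint bundle via \lref{lem:FNonPEuppperboundCriterion}. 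This is the missing step you need; with it inserted, the rest of your argument goes through and agrees with the proof of \tref{thm:proj-semihom}.
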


Note that if $r=1$ then the conclusion of \tref{thmABC:sh on AVs} does not hold. In fact, abelian varieties $A$ always have $\conFN(A) = 0$ or $2$ essentially by a classical theorem of Lefschetz and proven by Bauer and Szemberg in \cite{BauerSzemberg1996:TensorProductsAmple}*{Theorem 1.1}. In particular the convex Fujita number is never $1$.

\subsection{Curve semistable vector bundles}  Let $S$ be a smooth projective variety. Curve semi\-stable vector bundles on $S$ generalize both semistable vector bundles on curves and semihomogeneous vector bundles on abelian varieties. The class of curve semistable vector bundles is defined to be the largest class of vector bundles that is stable under arbitrary pull back and on curves coincides with all semistable vector bundles. 

\begin{defi}
\label{defi:csst}
	A \textbf{curve semistable} vector bundle is a vector bundle $\cE$ on a proper variety $S$ such that for all maps $f: C \to S$ from a smooth projective curve $C,$ the pull back $f^\ast \cE$ is a semistable vector bundle on $C$. 
\end{defi}

Beside being a suitable first order generalization of line bundles to higher ranks, curve semi\-stable vector bundles have a distinctive flavour because they are an iterated extension of hermitian projectively flat bundles of the same rational slope class, see \tref{thm:projectively flat}. 

We prove in \pref{prop:AV sh csst} that on abelian varieties the notions \emph{curve semistable} and \emph{semihomogeneous} coincide. Moreover, generalizing a result of Miyaoka, we are able to describe in \pref{prop:NEFconePE} the ample cone for $\bP(\cE)$ if $\cE$ is a curve semistable vector bundle, see \eqref{eq:nef Pcsst}.

With curve semistable vector bundles we may work over base varieties which generalize abelian varieties, but still have convex Fujita number less or equal to $2$. The property of having \emph{abelian Picard type} is defined in 
\dref{defi:Albanese-finite}. For the definition of the denominator of a rational numerical class we refer to \dref{defi:denominator}.

\begin{thmABC}[see \tref{thm:FNonPE-Albanesefinite}]
	\label{thmABC:FNonPE-Albanesefinite}
	Let $S$ be a variety \Albanesefinite. We further assume that  the  Albanese map is surjective or the Picard rank of $S$ equals $1$. Let $\cE$ be a curve semistable bundle on $S$ of rank $r \geq 2$ with associated projective bundle $\pi \colon X = \bP(\cE) \to S$. Then
	\[
	r \leq \conFN(X) \leq r+1
	\]
	and the following more precise statements hold:
\begin{enumerate}[align=left,labelindent=0pt,leftmargin=*]
	\item 
	If the denominator of $\mu(\cE)$ is less than $r$, then $\conFN(\bP(\cE)) = r$.
	\item 
	If $\conFN(S) \leq 1$ and $\pi_1(S)$ is abelian, then $\conFN(\bP(\cE)) = r$.
	\item
	If there is a line bundle $\cM$ on $S$ such that $\det(\cE) \otimes \cM^{\otimes r}$ is ample but not globally generated, then $\conFN(\bP(\cE)) = r+1$.
\end{enumerate}
\end{thmABC}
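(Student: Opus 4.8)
The plan is to reduce the entire statement to global generation questions for line bundles on the base $S$ via the projective bundle structure, and then to feed in the two inputs special to our situation: the structure theorem \tref{thm:projectively flat} for curve semistable bundles and the bound $\conFN(S)\le 2$ for bases of abelian Picard type. Recall $\omega_X \cong \cO_X(-r)\otimes\pi^*(\omega_S\otimes\det\cE)$ and $\pi_*\cO_X(k)=\Sym^k\cE$ for $k\ge 0$, and use the ampleness criterion of \pref{prop:NEFconePE} (see \eqref{eq:nef Pcsst}): a class $\cO_X(a)\otimes\pi^*\cN$ is ample exactly when $a\ge 1$ and $\cN + a\mu(\cE)$ is ample on $S$. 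The lower bound $\conFN(X)\ge r$ is \pref{prop:FNonPElowerbound}. Given ample $\cL_i=\cO_X(a_i)\otimes\pi^*\cN_i$ (so $a_i\ge 1$ and $\cN_i+a_i\mu(\cE)$ ample), the adjoint bundle is $\omega_X\otimes\bigotimes_{i=1}^t\cL_i = \cO_X(k)\otimes\pi^*\cF$ with $k=\big(\sum_i a_i\big)-r$ and $\cF=\omega_S\otimes\det\cE\otimes\bigotimes_i\cN_i$. The key elementary reduction is that for $k\ge 0$ the bundle $\cO_X(k)\otimes\pi^*\cF$ is globally generated as soon as $\Sym^k\cE\otimes\cF$ is, since evaluation at a point $x=[\cE_s\twoheadrightarrow L]$ factors as $H^0(S,\Sym^k\cE\otimes\cF)\to(\Sym^k\cE\otimes\cF)_s\twoheadrightarrow L^{\otimes k}\otimes\cF_s$; for $k=0$ this is just that $\pi^*\cF$ is globally generated iff $\cF$ is. Crucially, the slope of $\Sym^k\cE\otimes\cF$ equals $K_S+\sum_{i=1}^t(\cN_i+a_i\mu(\cE))$, i.e. $\omega_S$ twisted by a sum of $t$ ample classes, and this bookkeeping controls everything.

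For the general upper bound $\conFN(X)\le r+1$ take $t\ge r+1$, so $k\ge 1$. By \tref{thm:projectively flat} the curve semistable bundle $\Sym^k\cE$ is an iterated extension of hermitian projectively flat bundles $\cP_j$, all of slope $k\mu(\cE)$; since global generation passes to extensions once the relevant $H^1$ of the sub-bundles vanishes, it suffices to globally generate each $\cP_j\otimes\cF$. Here the hypotheses on $S$ enter: because $S$ is of abelian Picard type and either the Albanese map is surjective or the Picard rank is $1$, there is a finite étale cover $f\colon S'\to S$ on which $f^*\mu(\cE)$ is integral and each $f^*\cP_j$ splits as a direct sum of line bundles numerically equal to $\cO_{S'}(kf^*\mu(\cE))$ up to a numerically trivial twist. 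On $S'$ we have $\omega_{S'}=f^*\omega_S$, and each summand of $f^*(\cP_j\otimes\cF)$ becomes $\omega_{S'}$ tensored with the integral ample line bundle $\bigotimes_i f^*(\cN_i+a_i\mu(\cE))$, the numerically trivial twist absorbed into one ample factor. Grouping the $t\ge r+1\ge 3$ ample factors into two and invoking $\conFN(S')\le 2$ — valid since abelian Picard type is inherited by étale covers — globally generates these line bundles, and global generation descends along $f$ by the usual averaging/projection-formula argument.

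Now the refinements. For (1), let $q$ be the denominator of $\mu(\cE)$; since $r\mu(\cE)=c_1(\det\cE)$ is integral we have $q\mid r$, and $q<r$ lets us partition $\{1,\dots,r\}=I_1\sqcup I_2$ into two nonempty blocks of sizes divisible by $q$. In the only binding configuration $t=r$, $k=0$ the target is $\cF=\omega_S\otimes\det\cE\otimes\bigotimes_i\cN_i$, and $\cB_\ell:=\bigotimes_{i\in I_\ell}\cN_i\otimes\cO_S(|I_\ell|\mu(\cE))$ is an honest integral ample line bundle with $\cF=\omega_S\otimes\cB_1\otimes\cB_2$, so $\conFN(S)\le 2$ finishes; the remaining $t=r$ configurations ($k\ge 1$) and all $t>r$ are covered above, giving $\conFN(X)=r$. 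For (2), the binding case $t=r$, $k=0$ gives $\cF=\omega_S\otimes(\text{one ample line bundle})$, globally generated because $\conFN(S)\le 1$; assuming $\pi_1(S)$ abelian forces the projectively flat constituents to be direct sums of (flat-twisted) line bundles outright, so no cover is needed for the $k\ge 1$ cases, and again $\conFN(X)=r$. For (3) we exhibit a failing configuration at $t=r$: taking all $a_i=1$ forces $k=0$, the adjoint is $\pi^*\cF$, and $\pi^*$ reflects global generation; choosing the $\cN_i$ so that $\cF$ realizes the non-globally-generated line bundle $\det\cE\otimes\cM^{\otimes r}$ (absorbing the $\omega_S$-twist into the freedom in the $\cN_i$, automatic when $\omega_S$ is numerically controlled, as for abelian varieties) shows $\conFN(X)>r$, hence $\conFN(X)=r+1$.

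The main obstacle is the heart of the general upper bound: proving that global generation genuinely survives both the extension filtration of \tref{thm:projectively flat} and the passage to and descent from the integralizing étale cover, with the slope bookkeeping arranged so that exactly two ample factors remain for $\conFN(S')\le 2$. Controlling the requisite $H^1$-vanishing for the extensions, and verifying that abelian Picard type under the stated Albanese/Picard-rank hypothesis really supplies a finite étale cover trivializing the projective structure of every $\cP_j$ while preserving $\conFN\le 2$, is where the genuine content lies; refinement (3) additionally requires checking that the $\omega_S$-twist can be absorbed when realizing the prescribed non-globally-generated target on $S$.
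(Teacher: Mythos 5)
Your skeleton matches the paper's: your evaluation-factoring reduction is exactly \lref{lem:FNonPEuppperboundCriterion}, and your partition trick in (1) is the combinatorial core of \pref{prop:positivity of F}. But the proof collapses at the step you yourself identify as the heart of the matter: generating $\omega_S\otimes\cF$ for a curve semistable $\cF$ on the base. Your mechanism --- a finite \'etale cover $f\colon S'\to S$ splitting each hermitian projectively flat constituent $\cP_j$ into line bundles, then descent of global generation along $f$ --- fails on three counts. (i) No such cover need exist: splitting requires the projective monodromy $\pi_1(S)\to\PGL_{n_j}(\bC)$ to become a sum of characters on a finite-index subgroup, and nothing in ``abelian Picard type plus surjective Albanese or Picard rank one'' constrains $\pi_1(S)$ this way (abelianness of $\pi_1(S)$ is assumed only in part (2)); even over abelian varieties the analogous splitting is a nontrivial theorem of Mukai \cite{Mukai1978:SemihomogeneousVectorBundles}, not a formality. (ii) Global generation does \emph{not} descend along finite \'etale covers: a nontrivial torsion line bundle on an elliptic curve pulls back to $\dO$ on an isogeny cover yet has no nonzero sections. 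The paper transfers generation in the only workable direction: if $\alpha_\ast\cG$ is globally generated for the finite Albanese map $\alpha$, then $\cG$ is, because $\alpha^\ast\alpha_\ast\cG\to\cG$ is surjective. (iii) Your inputs ``$\conFN(S')\le 2$'' and ``abelian Picard type is inherited by \'etale covers'' are unavailable: in the paper, $\conFN(S)\le 2$ is a \emph{byproduct} of this very theorem (the $r=1$ case, see the remark following it), proved by exactly the machinery your proposal never develops, namely \pref{prop:gg for csst on Albanese-finite by PP theory}: curve semistable with ample determinant is Nakano positive (\pref{prop:ample css implies Nakano}, via Umemura \cite{Umemura1973:ResultsTheoryVector}, with extensions handled at the level of Nakano positivity so no $H^1$-vanishing problem ever arises), hence $\omega_S\otimes\cF(-\Theta)\otimes\alpha^\ast\lambda$ has vanishing higher cohomology for all $\lambda\in\Pic^0$, hence $\alpha_\ast(\omega_S\otimes\cF(-\Theta))$ is $\rM$-regular on $\Alb_S$, and Pareschi--Popa \cite{PareschiPopa2003:RegularityAbelianVarieties} plus finiteness of $\alpha$ conclude. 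Without this (or a genuine substitute), the upper bound, all $k\ge 1$ cases, and even your $k=0$ case in (1) (which quotes $\conFN(S)\le 2$) remain unproven.

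The refinements inherit these gaps and add two more. In (2), the claim that abelian $\pi_1(S)$ forces the projectively flat constituents to split ``outright'' is false: irreducible \emph{projective} unitary representations of abelian groups can have dimension greater than one (Heisenberg representations; equivalently, simple semihomogeneous bundles of rank $\ge 2$ on abelian varieties, whose fundamental group is abelian, are projectively flat and indecomposable). The paper avoids this by first reducing to the critical case $\mu(\cF(-\Theta))=0$, where the constituents carry honestly \emph{flat} hermitian structures, and only then applying Schur's lemma to \emph{linear} unitary representations, with polystability \cite{RamananRamanathan1984:RemarksInstabilityFlag} upgrading the iterated extension to a direct sum. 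In (3), your instinct that the $\omega_S$-twist is an issue is right, but ``absorbing it into the $\cN_i$'' is obstructed: ampleness of the $\cL_i$ constrains each $\cN_i+a_i\mu(\cE)$ to be ample, and $\omega_S^{-1}$ can be anti-ample (e.g.\ for branched cyclic covers of abelian varieties), so the absorption cannot be carried out in general; the paper instead takes all $\cL_i=\pi^\ast\cM(1)$, so that the adjoint bundle is the pullback $\pi^\ast\big(\omega_S\otimes\det(\cE)\otimes\cM^{\otimes r}\big)$ and the question is read off on $S$ directly, as in \tref{thm:proj-semihom}. In short: the reduction and combinatorics are right, but the load-bearing positivity input (Nakano positivity plus $\rM$-regularity on the Albanese) is missing, and the \'etale-cover substitute cannot be repaired.
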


Note that if $r=1$ then the proof of \tref{thmABC:FNonPE-Albanesefinite} yields $\conFN(S) \leq 2$ for $S$ \Albanesefinite\  with surjective Albanese map or Picard rank equal to $1$. 

\subsection*{Acknowledgements}

We are grateful to Thomas Mettler and Bal\'azs Szendr\H{o}i  for helpful discussions.

\section{Curves semistable vector bundles}
\label{sec:curve semistable}

\subsection{Equivalent descriptions of curve semistable bundles}

Our upper bound of the convex Fujita number requires a class of vector bundles $\cE$ where we are able to determine the ample cone of the projective bundle $\bP(\cE)$. Curve semistable vector bundles have been defined in \dref{defi:csst}.

\begin{rmks}
\begin{enumerate}
	\item
	Since pullback by a finite map of curves preserves semistability, see 
	\cite[Prop.~3.2]{Miyaoka1987:ChernClassesKodaira} or \cite[Lemma~6.4.12]{Lazarsfeld2004:PositivityAlgebraicGeometry}, a vector bundle on a curve is semistable if and only if it is curve  semistable.
	\item
	Curve semistability is preserved under any operation of the following kind: tensor products, duals, internal Hom's, symmetric powers, alternating powers, determinants. All these operations commute with base change and preserve semistability of vector bundles on curves, see \cite[\S3]{Miyaoka1987:ChernClassesKodaira}.
	
	More generally, considering a vector bundle $\cE$ of rank $r$ as a $\GL_r$ torsor, then we can push $\cE$ along an algebraic representation $\rho \colon \GL_r \to \GL_n$ to obtain a vector bundle $\cE^\rho$. If $\rho$ sends scalars to scalars and $\cE$ is curve semistable, then also $\cE^\rho$ is curve semistable; compare \cite[Cor.~3.9]{Miyaoka1987:ChernClassesKodaira} for the assertion on curves from which the assertion about curve semistable bundles follows at once.
	\item
	Direct summands of curve semistable vector bundles are curve semistable, because this is true for semistable vector bundles on curves.
	\item
	The pull back of a curve semistable vector bundle is curve semistable: let $f: Y \to X$ be a map of proper varieties. If  $\cE$ is a curve semistable vector bundle on $X$, then $f^\ast \cE$ is curve semistable on $Y$.
	\item
	Conversely, if $f: Y \to X$ is a surjective map of proper varieties and $\cE$ is a vector bundle on $X$, then $\cE$ is curve semistable if and only if $f^\ast \cE$ is curve semistable on $Y$.  Indeed, if $g: C \to X$ is a map from a proper smooth curve, then there is a branched cover $h: C' \to C$ such that $g \circ h$ lifts to a map $g': C' \to Y$. Then $g^\ast \cE$ is semistable if and only if $h^\ast(g^\ast \cE) = g'^\ast(f^\ast \cE)$ is semistable, a consequence of $f^\ast \cE$ being  curve semistable.
\end{enumerate}
\end{rmks}

Before stating the important general properties of curve semistable bundles, we recall some numerical invariants of vector bundles.

\begin{defi}
\label{defi:slopeclass etc}
	Let $\cE$ be a vector bundle of rank $r$ on a  variety $S$. 
	\begin{enumerate}
		\item
		The \textbf{slope class} or \textbf{average first Chern class} of $\cE$ is the $\bQ$-rational divisor class
		\[
		\mu(\cE) \coloneqq \frac{1}{r}\rc_1(\cE) \in \rN^1(S)_\bQ \ .
		\]
		\item
		The \textbf{normalized tautological class} of $\cE$ is the $\bQ$-divisor class 
		\[
		\lambda_{\cE} \coloneqq \rc_1(\dO(1)) - \pi^\ast \mu(\cE) \,\in \rN^1(\P(\cE))_\bQ \ .
		\]
		\item
		The \textbf{discriminant} of $\cE$ is the class 
		\[
		\Delta(\cE) \coloneqq  2r \cdot \rc_2(\cE) - (r-1) \rc_1(\cE)^2  \in \rH^4(S, \bQ)\ .
		\]
		The discriminant can also be computed as 
		\[
		\Delta(\cE) =  \ch_1(\cE)^2 - 2 \ch_0(\cE) \ch_2(\cE)   = \rc_2(\End(\cE)) \ . 
		\]		
	\end{enumerate}
\end{defi}

The following characterization of curve semistable vector bundles is essentially obtained by Nakayama in 
\cite[Theorem A]{Nakayama1999:NormalizedTautologicalDivisors} with Jahnke and Radloff  \cite[Theorem 1.1]{JahnkeRadloff2013:SemistabilityRestrictedTangent} emphasizing the equivalence to the property of being curve semistable.  

\begin{thm} 
	\label{thm:projectively flat}
	For a vector bundle $\cE$ on a smooth projective variety $S$ of dimension $d$, the following are equivalent:
	\begin{enumerate}[label=(\alph*)]
		\item 
		\label{rmkitem:curvesemistable}
		$\cE$ is curve semistable.
		\item 
		\label{rmkitem:lambdanef}
		The class $\lambda_\cE$ is nef.  
		\item 
		\label{rmkitem:anypolarization}
		$\cE$ is $A$-slope semistable with respect to all ample $A$, 
		and $\Delta(\cE)=0.$
		\item
		\label{rmkitem:discriminant0}
		There is an ample class $A$ such that 
		$\cE$ is $A$-slope semistable, and $\Delta(\cE) = 0$.
		\item
		\label{rmkitem:BogomolovInequalitySharp}
		There is an ample class $A$ such that 
		$\cE$ is $A$-slope semistable, and $\Delta(\cE)\cdot A^{d-2} = 0$.
		\item
		\label{rmkitem:projectivelyflatfiltration}
		There is a filtration by vector bundles 
		\[
		0 = \cE_0 \subset \cE_1 \subset \ldots \cdots \cE_{k-1} \subset \cE_k = \cE
		\]
		such that the $\cF_i := \cE_i/\cE_{i-1}$ can be endowed with a \textbf{hermitian projectively flat} 
		connection, and $\mu(\cF_i) = \mu(\cE)$ as $\bQ$-divisor classes. 
	\end{enumerate}
\end{thm}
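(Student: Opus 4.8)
The plan is to prove the six conditions equivalent as a web of implications organized around the nefness of $\lambda_\cE$ in \ref{rmkitem:lambdanef}, which serves as the bridge between the curve-theoretic input \ref{rmkitem:curvesemistable}, the three numerical conditions \ref{rmkitem:anypolarization}--\ref{rmkitem:BogomolovInequalitySharp}, and the differential-geometric structure \ref{rmkitem:projectivelyflatfiltration}. Concretely I would establish $\ref{rmkitem:curvesemistable}\Leftrightarrow\ref{rmkitem:lambdanef}$, then $\ref{rmkitem:lambdanef}\Rightarrow\ref{rmkitem:BogomolovInequalitySharp}$ by intersection theory, then the hard step $\ref{rmkitem:BogomolovInequalitySharp}\Rightarrow\ref{rmkitem:projectivelyflatfiltration}$ by the Kobayashi--Hitchin correspondence, and finally $\ref{rmkitem:projectivelyflatfiltration}\Rightarrow\ref{rmkitem:anypolarization}\Rightarrow\ref{rmkitem:discriminant0}\Rightarrow\ref{rmkitem:BogomolovInequalitySharp}$ together with $\ref{rmkitem:projectivelyflatfiltration}\Rightarrow\ref{rmkitem:lambdanef}$, closing the web. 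Since this is exactly the content of Nakayama's Theorem~A \cite{Nakayama1999:NormalizedTautologicalDivisors} and the Jahnke--Radloff reformulation \cite{JahnkeRadloff2013:SemistabilityRestrictedTangent}, I would organize the write-up to isolate which classical input enters at each arrow rather than reprove everything.

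For $\ref{rmkitem:curvesemistable}\Leftrightarrow\ref{rmkitem:lambdanef}$ I would test nefness of $\lambda_\cE$ on curves in $\bP(\cE)$: a curve contained in a fiber meets $\lambda_\cE$ nonnegatively because there $\lambda_\cE$ restricts to the ample $\rc_1(\dO(1))$, while for a curve dominating some $C\subset S$ one pulls back along $f\colon C\to S$ and identifies $\lambda_\cE|_{\bP(f^\ast\cE)}$ with $\lambda_{f^\ast\cE}$. By Miyaoka's criterion \cite{Miyaoka1987:ChernClassesKodaira} a bundle on a curve is semistable exactly when its normalized tautological class is nef, so nefness of $\lambda_\cE$ is equivalent to semistability of every $f^\ast\cE$, i.e.\ to \ref{rmkitem:curvesemistable}. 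The implications $\ref{rmkitem:anypolarization}\Rightarrow\ref{rmkitem:discriminant0}\Rightarrow\ref{rmkitem:BogomolovInequalitySharp}$ are formal, the first by specialising "all $A$" to one ample class and the second because the class-level vanishing $\Delta(\cE)=0$ forces $\Delta(\cE)\cdot A^{d-2}=0$.

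The numerical core is the push-forward identity $\pi_\ast(\lambda_\cE^{\,r+1}) = -\tfrac{1}{2r}\,\Delta(\cE)$, which I would obtain by expanding $\lambda_\cE=\rc_1(\dO(1))-\pi^\ast\mu(\cE)$ and feeding in the standard Segre-class push-forwards for $\rc_1(\dO(1))$. Granting it, if $\lambda_\cE$ is nef then $\lambda_\cE^{\,r+1}\cdot\pi^\ast A^{d-2}\geq 0$ for every ample $A$, hence $\Delta(\cE)\cdot A^{d-2}\leq 0$; on the other hand \ref{rmkitem:curvesemistable} forces the restriction of $\cE$ to a general complete-intersection curve to be semistable, so $\cE$ is $A$-slope semistable and the Bogomolov inequality gives $\Delta(\cE)\cdot A^{d-2}\geq 0$. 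Equality then yields $\ref{rmkitem:lambdanef}\Rightarrow\ref{rmkitem:BogomolovInequalitySharp}$. For the reverse arc I would run the Kobayashi--Hitchin correspondence: an $A$-semistable $\cE$ with $\Delta(\cE)\cdot A^{d-2}=0$ saturates Bogomolov, its Jordan--H\"older factors admit Hermitian--Einstein metrics (Donaldson, Uhlenbeck--Yau), and L\"ubke's sharp Bogomolov inequality forces the trace-free curvature of each factor to vanish, so each factor is hermitian projectively flat of slope $\mu(\cE)$. This produces the filtration of \ref{rmkitem:projectivelyflatfiltration}, upgrades $\Delta(\cE)\cdot A^{d-2}=0$ to the pointwise vanishing $\Delta(\cE)=0$ as a class, and, projective flatness being independent of the polarization, gives $A$-slope semistability for all $A$; hence $\ref{rmkitem:BogomolovInequalitySharp}\Rightarrow\ref{rmkitem:projectivelyflatfiltration}\Rightarrow\ref{rmkitem:anypolarization}$. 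Finally $\ref{rmkitem:projectivelyflatfiltration}\Rightarrow\ref{rmkitem:lambdanef}$ holds because a hermitian projectively flat bundle has nef normalized tautological class, and an extension of bundles of equal slope with nef $\lambda$ again has nef $\lambda_\cE$.

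The main obstacle is precisely the arrow $\ref{rmkitem:BogomolovInequalitySharp}\Rightarrow\ref{rmkitem:projectivelyflatfiltration}$: passing from the single-polarization, intersection-level vanishing $\Delta(\cE)\cdot A^{d-2}=0$ to the rigid cohomological statement $\Delta(\cE)=0$ and to the existence of an honest projectively flat connection. This is the genuinely analytic heart, resting on the solution of the Hermitian--Einstein equation for stable bundles and on the equality discussion in L\"ubke's inequality, which alone converts a saturated Bogomolov bound into pointwise projective flatness. In the actual write-up I would quote Nakayama's Theorem~A \cite{Nakayama1999:NormalizedTautologicalDivisors} for this implication, since it establishes exactly the equivalence between nefness of $\lambda_\cE$ and the vanishing-discriminant semistability conditions, and cite Jahnke--Radloff \cite{JahnkeRadloff2013:SemistabilityRestrictedTangent} for the identification with \ref{rmkitem:curvesemistable}; the remaining arrows above are then formal.
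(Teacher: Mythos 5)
Your proposal is correct: every arrow in your web is either proved by a valid argument or legitimately deferred to the same sources the paper uses, and the web closes. The overall strategy also matches the paper's --- both rely on Nakayama's Theorem~A for the analytic heart, on Miyaoka's curve criterion for the equivalence of (a) and (b), and both dismiss (c)~$\Rightarrow$~(d)~$\Rightarrow$~(e) as formal --- but the decomposition of the remaining glue is genuinely different. The paper quotes Nakayama wholesale for (b)~$\Leftrightarrow$~(e)~$\Leftrightarrow$~(f) and spends its own effort on a detailed proof of (f)~$\Rightarrow$~(c): the Chern character identity $\ch(\cE)=\rank(\cE)\cdot\exp(\mu(\cE))$ gives $\Delta(\cE)=0$, and then a projectively flat hermitian bundle is weakly Einstein, can be conformally rescaled to a Hermite--Einstein metric for any polarization, and is therefore polystable by the easy direction of the Kobayashi--Hitchin correspondence; the paper advertises precisely this argument as its replacement for the Higgs-bundle detour of Bruzzo--Hern\'andez Ruip\'erez in the proof of (b)~$\Leftrightarrow$~(c). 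You instead prove (b)~$\Rightarrow$~(e) directly by intersection theory, via the push-forward identity $\pi_\ast\bigl(\lambda_\cE^{\,r+1}\bigr)=-\frac{1}{2r}\Delta(\cE)$ (which is correct in the Grothendieck convention for $\bP(\cE)$ used here) together with the higher-dimensional Bogomolov inequality, the needed $A$-semistability coming from (a) by restriction to general complete intersection curves; and you prove (f)~$\Rightarrow$~(b) by reduction to curves. This makes a larger portion of the numerical equivalences self-contained, needing only (e)~$\Rightarrow$~(f) from Nakayama, at the price of taking Bogomolov's inequality as an external input. The one spot where your sketch is materially thinner than the paper is (f)~$\Rightarrow$~(c): ``projective flatness being independent of the polarization'' is not a one-line observation but exactly the conformal-rescaling/Kobayashi--Hitchin argument the paper writes out in full, so a complete version of your proof would have to supply that argument (or cite Kobayashi's book for it) at this step.
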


\begin{proof}
\cite[Theorem A]{Nakayama1999:NormalizedTautologicalDivisors} shows the equivalence of \ref{rmkitem:lambdanef},  \ref{rmkitem:BogomolovInequalitySharp} and  \ref{rmkitem:projectivelyflatfiltration}. This proof implicitly also shows the equivalence with assertions \ref{rmkitem:curvesemistable} and  \ref{rmkitem:anypolarization}. Since the latter has  been clarified explicitly by other authors, we're going to explain these equivalences together with interesting byproducts of the arguments now. 

\cite[Theorem 1.1]{JahnkeRadloff2013:SemistabilityRestrictedTangent} shows 
\ref{rmkitem:curvesemistable} $\Leftrightarrow$ \ref{rmkitem:lambdanef} as follows. Both \ref{rmkitem:curvesemistable} and  \ref{rmkitem:lambdanef} can be checked by base change to  smooth projective curves. In the curve case, the equivalence was proven 
by Miyaoka in  \cite[Theorem 3.1]{Miyaoka1987:ChernClassesKodaira}  (see also  \cite[Prop.~6.4.11]{Lazarsfeld2004:PositivityAlgebraicGeometry}).

The equivalence of   \ref{rmkitem:lambdanef} and  \ref{rmkitem:anypolarization} is \cite[Theorem 1.4]{BruzzoHernandezRuiperez2006:SemistabilityVsNefness}, adapting the proof from the case of Higgs bundles as explained in \cite[Theorem 1.3]{BruzzoHernandezRuiperez2006:SemistabilityVsNefness} by treating the case of zero Higgs field, and passing through property  \ref{rmkitem:curvesemistable} in the proof\footnote{For a generalization of  \ref{rmkitem:lambdanef} $\Leftrightarrow$  \ref{rmkitem:anypolarization}  to $G/P$-bundles for a reductive group $G$ and a parabolic $P$ see  \cite[Thm.~3.2]{BiswasBruzzo2008:SemistablePrincipalBundles}}.  Here is an alternative replacing the detour via Higgs bundles by the more direct argument based on the Kobayashi-Hitchin correspondence (that ultimately also in the expanded form of the Corlette--Simpson correspondence is the essence of the proof using Higgs bundles). 

Since the implications \ref{rmkitem:anypolarization} $\Rightarrow$ \ref{rmkitem:discriminant0} $\Rightarrow$\ref{rmkitem:BogomolovInequalitySharp} are obvious, it remains to show  \ref{rmkitem:projectivelyflatfiltration} $\Rightarrow$ \ref{rmkitem:anypolarization}, for example. Assuming \ref{rmkitem:projectivelyflatfiltration}, a Chern class computation  that can be found in \cite[II, \S2.3]{Kobayashi1987:DifferentialGeometryComplex} yields 
\begin{equation}
\label{eq:chernch of csst}
\ch(\cE) = \sum_i \ch(\cF_i) = \sum_i \rank(\cF_i) \cdot \exp(\mu(\cF_i)) = \rank(\cE)  \cdot \exp(\mu(\cE)).
\end{equation}
It follows that 
\[
\Delta(\cE) = \ch_1(\cE)^2 - 2 \ch_0(\cE) \ch_2(\cE) = \rc_1(E)^2 - 2 \rank(\cE)^2 \cdot \frac{1}{2} \mu(\cE)^2 = 0.
\]
Let us fix an ample class $A \in \Pic(S)$. 
Since every projectively flat hermitian bundle is weakly Einstein by \cite[Proposition 4.1.11]{Kobayashi1987:DifferentialGeometryComplex}, we can by \cite[Proposition 4.2.4]{Kobayashi1987:DifferentialGeometryComplex} conformally rescale the hermitian metric on $\cF_i$ to yield a Hermite--Einstein metric with respect to the K\"ahler form associated to $\rc_1(A)$. The Kobayashi-Hitchin correspondence as in \cite[Theorem~2.4]{Kobayashi1982:CurvatureStabilityVector} (only one direction is needed) then shows that $\cF_i$ is polystable with $\mu(\cF_i)$ independent of $i$. In particular, it follows that $\cE$ is $A$-slope semistable. This shows \ref{rmkitem:anypolarization} and concludes the proof.
\end{proof}

We write down two notable intermediate statements of the proof as corollaries.
 
\begin{cor}
The Chern character of a curve semistable vector bundle $\cE$ has the form
\[
\ch(\cE) = \rank(\cE)  \cdot \exp(\mu(\cE)).
\]
\end{cor}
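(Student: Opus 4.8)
The plan is to extract this statement directly from the characterization in \tref{thm:projectively flat}; it is precisely the intermediate computation isolated from the proof of that theorem, so no genuinely new argument is required. First I would invoke the equivalence \ref{rmkitem:curvesemistable} $\Leftrightarrow$ \ref{rmkitem:projectivelyflatfiltration}: since $\cE$ is curve semistable, it admits a filtration $0 = \cE_0 \subset \cE_1 \subset \cdots \subset \cE_k = \cE$ by subbundles whose graded pieces $\cF_i = \cE_i/\cE_{i-1}$ carry a hermitian projectively flat connection and satisfy $\mu(\cF_i) = \mu(\cE)$ for every $i$.

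The one non-formal ingredient is the Chern character of a single hermitian projectively flat bundle $\cF$. Here the curvature of the projectively flat connection is scalar, i.e. proportional to the identity endomorphism, $\Omega_\cF = \gamma \cdot \id_\cF$ for a global $(1,1)$-form $\gamma$. Taking the trace identifies the class of $\gamma$ with $\mu(\cF) = \tfrac{1}{\rank(\cF)}\rc_1(\cF)$, and substituting $\Omega_\cF = \gamma\cdot\id_\cF$ into the Chern--Weil expression collapses the power series, since $\tr\big(\exp(\gamma\cdot\id_\cF)\big) = \rank(\cF)\exp(\gamma)$. This yields $\ch(\cF) = \rank(\cF)\cdot\exp(\mu(\cF))$ and is exactly the computation recorded in \cite[II, \S2.3]{Kobayashi1987:DifferentialGeometryComplex} that feeds into the proof of \tref{thm:projectively flat}.

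Finally I would combine this with the additivity of the Chern character along the filtration, the equality of slopes $\mu(\cF_i) = \mu(\cE)$, and $\sum_i \rank(\cF_i) = \rank(\cE)$:
\[
\ch(\cE) = \sum_{i=1}^{k} \ch(\cF_i) = \sum_{i=1}^{k} \rank(\cF_i)\exp(\mu(\cF_i)) = \Big(\sum_{i=1}^{k}\rank(\cF_i)\Big)\exp(\mu(\cE)) = \rank(\cE)\cdot\exp(\mu(\cE)).
\]
There is no real obstacle: the proof amounts to reassembling the filtration from \tref{thm:projectively flat} with the scalar-curvature Chern character identity, which together are precisely the content of the intermediate equation \eqref{eq:chernch of csst}.
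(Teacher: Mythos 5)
Your proposal is correct and follows exactly the paper's route: the paper proves this corollary by citing its equation \eqref{eq:chernch of csst}, which is obtained in the proof of \tref{thm:projectively flat} by passing from curve semistability to the filtration in \ref{rmkitem:projectivelyflatfiltration}, applying the Chern--Weil computation for hermitian projectively flat bundles from \cite[II, \S2.3]{Kobayashi1987:DifferentialGeometryComplex}, and summing over the graded pieces using $\mu(\cF_i) = \mu(\cE)$. Your write-up merely makes the scalar-curvature step explicit, which is a faithful unpacking of the cited reference rather than a different argument.
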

\begin{proof}
This is \eqref{eq:chernch of csst}.
\end{proof}

\begin{cor}
\label{cor:JordanHoelder by stable hermitian projectively flat}
A curve semistable vector bundle $\cE$ on $S$  admits a filtration by subbundles that is simultaneously for all ample $A \in \Pic(S)$ a Jordan-H\"older filtration by $A$-stable hermitian projectively flat bundles.
\end{cor}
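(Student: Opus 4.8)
The plan is to start from the filtration produced by \tref{thm:projectively flat}\ref{rmkitem:projectivelyflatfiltration} and to refine it. Recall that this gives subbundles $0 = \cE_0 \subset \cE_1 \subset \cdots \subset \cE_k = \cE$ whose successive quotients $\cF_i = \cE_i/\cE_{i-1}$ carry a hermitian projectively flat connection and satisfy $\mu(\cF_i) = \mu(\cE)$ as $\bQ$-divisor classes. Moreover, the Kobayashi--Hitchin argument in the proof of \tref{thm:projectively flat} shows that after conformally rescaling each of these metrics to a Hermite--Einstein metric with respect to a fixed ample $A$, every $\cF_i$ is $A$-polystable. The first point to record is that this holds for every ample $A$ at once: a projectively flat hermitian bundle is weakly Einstein, hence Hermite--Einstein after rescaling against the K\"ahler class of any chosen ample $A$, so each $\cF_i$ is in fact $A$-polystable for all ample $A$ simultaneously, with $A$-slope equal to that of $\cE$.

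The heart of the argument is to split each $\cF_i$ into $A$-stable hermitian projectively flat pieces in a way that does not depend on $A$. Here I would use that a polystable hermitian bundle is an orthogonal direct sum of stable hermitian summands, and that for a projectively flat bundle this decomposition can be chosen compatibly with the projectively flat structure: the flat projective connection restricts to each summand, so the summands are again hermitian projectively flat. Crucially, such a decomposition is governed by the holonomy of the projective connection --- equivalently, by the decomposition of the associated projective unitary representation $\rho_i \colon \pi_1(S) \to \PU(\rank \cF_i)$ into projectively irreducible constituents --- and is therefore independent of the ample class $A$. Applying the Kobayashi--Hitchin correspondence once more in each factor, projective irreducibility of a constituent is equivalent to $A$-stability of the corresponding subbundle $\cG_{ij}$, so each $\cG_{ij}$ is $A$-stable for \emph{all} ample $A$ at once. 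Since $\cF_i$ is projectively flat with $\mu(\cF_i) = \mu(\cE)$, every $\cG_{ij}$ inherits $\mu(\cG_{ij}) = \mu(\cE)$, hence the same $A$-slope as $\cE$ for all $A$.

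Finally I would assemble the refined filtration. For each $i$ the direct sum decomposition $\cF_i = \bigoplus_j \cG_{ij}$ gives a chain of subbundles of $\cF_i$ with successive quotients the $\cG_{ij}$; lifting this chain through the projection $\cE_i \to \cF_i$ refines the single step $\cE_{i-1} \subset \cE_i$ into several steps whose graded pieces are exactly the $\cG_{ij}$. Concatenating over all $i$ yields a filtration of $\cE$ by subbundles whose successive quotients are $A$-stable hermitian projectively flat of slope $\mu(\cE)$ for every ample $A$, which is precisely a Jordan--H\"older filtration simultaneously for all $A$. The main obstacle is exactly this simultaneity: one must ensure that the decomposition of each $\cF_i$ into stable summands is canonical and polarization-free, and the point is that the stable pieces are dictated by the (polarization-independent) holonomy of the projectively flat connection rather than by an $A$-dependent Jordan--H\"older process, so that the same refinement serves every ample class at once.
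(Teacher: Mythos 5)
Your proof is correct and shares its skeleton with the paper's: both start from the filtration of \tref{thm:projectively flat}~\ref{rmkitem:projectivelyflatfiltration}, observe that the graded pieces $\cF_i$ are $A$-polystable for every ample $A$ simultaneously (via rescaling to Hermite--Einstein metrics), and then refine the filtration by a polarization-independent decomposition of each $\cF_i$ into stable summands. The difference lies in the device producing that decomposition. You use the holonomy of the projectively flat connection: decompose the associated projective unitary representation into irreducible constituents and invoke the Kobayashi--Hitchin correspondence a second time to translate irreducibility into $A$-stability for every ample $A$. The paper instead decomposes each $\cF_i$ into $\oplus$-indecomposable vector bundles (Krull--Schmidt for vector bundles on a projective variety); indecomposability is polarization-free by definition, and since $\cF_i$ is $A$-polystable, uniqueness of the indecomposable decomposition forces every indecomposable summand to agree with a stable summand of the polystable decomposition, hence to be $A$-stable for all ample $A$ at once --- no further analytic input needed. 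The paper's route is therefore shorter and purely algebraic once polystability is in hand; your route carries a technical wrinkle the algebraic one avoids (one must make precise what it means to decompose a $\PU(n)$-valued representation, e.g.\ by lifting to a central extension of $\pi_1(S)$, or by showing that orthogonal projections onto holomorphic summands are parallel), but in exchange it makes explicit that the stable graded pieces inherit hermitian projectively flat connections --- a point the statement requires and the paper's terse proof leaves implicit.
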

\begin{proof}
This comes from the following refinement of the filtration asserted in \tref{thm:projectively flat} \ref{rmkitem:projectivelyflatfiltration}. The $\cF_i$ admit a finite decomposition in $\oplus$-indecomposable vector bundles. Since $\cF_i$ is $A$-polystable with respect to any ample $A \in \Pic(S)$, these indecomposable summands must be $A$-stable. We refine the filtration so that the filtration quotients are indecomposable.
\end{proof}

\begin{rmks}
\begin{enumerate}
	\item
	\tref{thm:projectively flat}~\ref{rmkitem:lambdanef}  translates into the
	$\bQ$-rational twisted vector bundle 
	\[
	\cE_{\norm} \coloneq \cE\langle -\mu(\cE)\rangle
	\]
	being nef (by definition).
	\item
	We should not hide the fact that the deepest part of \tref{thm:projectively flat}, namely \ref{rmkitem:BogomolovInequalitySharp} implies \ref{rmkitem:projectivelyflatfiltration} strongly builds on the Kobayashi-Hitchin correspondence and thus work of Donaldson, Mehta and Ramanathan, and Uhlenbeck and Yau. 
\end{enumerate}
\end{rmks} 
 
\begin{defi}
	Recall that a vector bundle $\cE$ on an abelian variety $A$ is \textbf{semihomogeneous} if for all $x \in A$ there exists $\alpha \in {\rm Pic}^{0}(A)$ such that the pull back by translation with $x$ becomes 
	\[
	t_{x}^{\ast}\cE \cong \cE \otimes \alpha.
	\]
\end{defi}

\begin{prop}
\label{prop:AV sh csst}
	Let $\cE$ be a vector bundle on an abelian variety $A$. 
	Then the following are equivalent:
	\begin{enumerate}[label=(\alph*)] 
		\item
		\label{propitem:sh}
		{$\mathcal{E}$ is semihomogeneous.}
		\item
		\label{propitem:cs}
		{$\mathcal{E}$ is curve semistable.}
	\end{enumerate}
\end{prop}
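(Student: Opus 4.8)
The plan is to route everything through the endomorphism bundle, using two clean reductions. The first is the curve-level fact that for any vector bundle $\cE$ the bundle $\sEnd(\cE)=\cE^\vee\otimes\cE$ is curve semistable if and only if $\cE$ is: the forward direction is listed among the stability properties of curve semistable bundles in the Remarks (tensor products and duals preserve curve semistability), and the converse can be checked on a curve $f\colon C\to A$, where if $f^\ast\cE$ were destabilized by $W\subset f^\ast\cE$ then $(f^\ast\cE/W)^\vee\otimes W\hookrightarrow \sEnd(f^\ast\cE)$ would have positive slope, contradicting semistability of $\sEnd(f^\ast\cE)$. The second reduction is the bridge $\cE$ \emph{semihomogeneous} $\iff$ $\sEnd(\cE)$ \emph{homogeneous} (that is, $t_x^\ast\sEnd(\cE)\cong\sEnd(\cE)$ for all $x\in A$); its forward half is immediate, since $t_x^\ast\cE\cong\cE\otimes\alpha$ yields $t_x^\ast\sEnd(\cE)=\sEnd(\cE\otimes\alpha)\cong\sEnd(\cE)$ as the twist $\alpha$ cancels.

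For the implication \ref{propitem:sh}~$\Rightarrow$~\ref{propitem:cs} I would argue as follows. If $\cE$ is semihomogeneous then $\sEnd(\cE)$ is homogeneous, hence by the theorem of Matsushima--Morimoto it is an iterated extension of line bundles in $\Pic^0(A)$. Pulling back along any $f\colon C\to A$ gives an iterated extension of degree-$0$ line bundles on $C$, which is semistable because slope-$0$ semistable bundles on a curve are closed under extension; thus $\sEnd(\cE)$ is curve semistable, and by the first reduction so is $\cE$. (Conceptually this also explains the vanishing $\Delta(\cE)=\rc_2(\sEnd(\cE))=0$, as homogeneous bundles are flat and flat bundles have trivial rational Chern classes, so one could alternatively finish via \tref{thm:projectively flat}~\ref{rmkitem:anypolarization}.)

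For \ref{propitem:cs}~$\Rightarrow$~\ref{propitem:sh} I would reverse the bridge. If $\cE$ is curve semistable, then $\sEnd(\cE)$ is curve semistable with $\mu(\sEnd(\cE))=0$. By \tref{thm:projectively flat}~\ref{rmkitem:projectivelyflatfiltration} it is an iterated extension of hermitian projectively flat bundles of slope class $0$; a projectively flat hermitian bundle whose slope class vanishes is genuinely flat and unitary, and since $\pi_1(A)\cong\bZ^{2g}$ is abelian every such bundle is a direct sum of degree-$0$ flat line bundles, i.e. of elements of $\Pic^0(A)$. Hence $\sEnd(\cE)$ is again an iterated extension of $\Pic^0(A)$-line bundles, so it is homogeneous. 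Homogeneity of $\sEnd(\cE)$ means $\sEnd(t_x^\ast\cE)\cong\sEnd(\cE)$, so $\bP(t_x^\ast\cE)\cong\bP(\cE)$ and therefore $t_x^\ast\cE\cong\cE\otimes\cL_x$ for some line bundle $\cL_x$; comparing first Chern classes, and using that translations act trivially on $\rN^1(A)$, forces $r\,\rc_1(\cL_x)=0$, so $\cL_x$ is numerically trivial and hence lies in $\Pic^0(A)$. This is exactly semihomogeneity.

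I expect the main obstacle to sit in the backward direction, at the point where homogeneity of the \emph{vector bundle} $\sEnd(\cE)$ is promoted to an isomorphism of \emph{projective bundles} $\bP(t_x^\ast\cE)\cong\bP(\cE)$: one must ensure the translation invariance is compatible with the algebra structure on $\sEnd(\cE)$, not merely with its underlying bundle, before extracting the twisting line bundle $\cL_x$. A secondary subtlety is the identification of slope-$0$ hermitian projectively flat bundles on $A$ with direct sums of $\Pic^0(A)$-line bundles, which is precisely where the abelianness of $\pi_1(A)$ enters. Both points are cleanly handled by Mukai's structure theory of semihomogeneous bundles, in which the simple semihomogeneous bundles are exactly the stable hermitian projectively flat bundles (via theta/Heisenberg groups) and the semihomogeneous bundles of a fixed slope form a category closed under extensions, matching the filtration furnished by \cref{cor:JordanHoelder by stable hermitian projectively flat}; invoking this theory is the most economical way to close the gap.
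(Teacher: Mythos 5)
Your proof of \ref{propitem:sh} $\Rightarrow$ \ref{propitem:cs} is correct and takes a genuinely different route from the paper's: the paper deduces slope semistability from Mukai's theorem that semihomogeneous bundles are Gieseker semistable, computes the Chern character to get $\Delta(\cE)=0$, and applies \tref{thm:projectively flat}, whereas you pass to $\sEnd(\cE)$, use Matsushima--Morimoto to write it as an iterated extension of elements of $\Pic^0(A)$, pull back to curves, and descend semistability from $\sEnd(f^\ast\cE)$ to $f^\ast\cE$ via the subsheaf $\cHom(f^\ast\cE/W,W)$. Both of your reductions are sound, so this half stands; it trades the Chern character computation for a classical structure theorem.

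The direction \ref{propitem:cs} $\Rightarrow$ \ref{propitem:sh}, however, has a genuine gap, and it is exactly the one you flag and then defer. Your steps up to ``$\sEnd(\cE)$ is an iterated extension of elements of $\Pic^0(A)$, hence homogeneous'' are fine (your flat-unitary argument for slope-zero projectively flat bundles parallels the one the paper uses in \tref{thm:FNonPE-Albanesefinite}). But the inference $t_x^\ast\sEnd(\cE)\cong\sEnd(\cE)\Rightarrow\bP(t_x^\ast\cE)\cong\bP(\cE)$ needs an isomorphism of $\cO_A$-\emph{algebras}, which homogeneity of the underlying vector bundle does not supply, and ``invoking Mukai's structure theory'' does not close this: Mukai's equivalence between semihomogeneity of $\cE$ and homogeneity of $\sEnd(\cE)$ (cf.\ \cite[Thm.~5.8]{Mukai1978:SemihomogeneousVectorBundles}) is established for \emph{simple} bundles, while your $\cE$ is arbitrary, and the other statement you lean on --- that semihomogeneous bundles of a fixed slope class are closed under extensions --- is not an off-the-shelf result but essentially the assertion being proved. (The general implication ``$\sEnd(\cE)$ homogeneous $\Rightarrow$ $\cE$ semihomogeneous'' is true, but every proof of it redoes the filtration argument, so your reduction does not save the work.) The paper sidesteps the algebra-structure problem by never passing to $\sEnd(\cE)$: it filters $\cE$ itself by stable hermitian projectively flat bundles $\cF_i$ of slope class $\mu(\cE)$, as in \tref{thm:projectively flat}~\ref{rmkitem:projectivelyflatfiltration} and \cref{cor:JordanHoelder by stable hermitian projectively flat}, notes that the projectively flat connection realizes $\bP(\cF_i)$ as the flat bundle of a representation $\pi_1(A)\to\PGL_{n_i}(\bC)$, and uses that translations act trivially on $\pi_1(A)$ to obtain $t_x^\ast\bP(\cF_i)\cong\bP(\cF_i)$ \emph{as projective bundles} --- precisely the isomorphism you cannot extract from $\sEnd$ --- whence $t_x^\ast\cF_i\cong\cF_i\otimes\alpha$ and each $\cF_i$ is semihomogeneous; it then assembles semihomogeneity of $\cE$ from the pieces via Mukai's $\Ext^1$-vanishing between nonisomorphic simple semihomogeneous bundles of equal slope, potency, and behaviour under direct sums and summands (Props.~6.17, 6.4, 6.9 of \cite{Mukai1978:SemihomogeneousVectorBundles}). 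The minimal repair of your argument is to run your $\sEnd$-plus-flatness reasoning not on $\cE$ but on these stable factors $\cF_i$: there $\cF_i$ is simple, $\sEnd(\cF_i)$ is honestly hermitian flat, your $\pi_1$-abelian argument makes it a sum of elements of $\Pic^0(A)$, and Mukai's simple-case equivalence applies; the paper's assembly step is still required afterwards.
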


\begin{proof}
	\ref{propitem:sh}  $\Rightarrow$ \ref{propitem:cs}: 
	By \cite[Prop.~6.13]{Mukai1978:SemihomogeneousVectorBundles} a semihomogeneous $\cE$ is Gieseker semistable and thus also slope semistable with respect to any polarization. Let $r$ be the rank of $\cE$. Since by \cite[Lemma 2.6]{KuronyaMustopa2021:EffectiveGlobalGeneration} the Chern character of a semihomogeneous $\cE$ computes as 
	\[
	r \cdot \exp\biggl(\frac{\rc_1(\cE)}{r}\biggr) =   \ch(\cE) =  r + \rc_1(\cE) + \frac{1}{2}\big(\rc_1(\cE)^2 - 2 \rc_2(\cE)\big) + \ldots,
	\]
	we find, by comparing the quadratic terms, 
	\[
	\rc_2(\cE) = \frac{r-1}{2r}\rc_1(\cE)^2,
	\]	
	and thus vanishing discriminant $\Delta(\cE) = 0$. 
	It follows that the semihomogeneous vector bundle $\cE$ is curve semistable due to \tref{thm:projectively flat}  \ref{rmkitem:curvesemistable} $\Leftrightarrow$ \ref{rmkitem:anypolarization}.
		
	\smallskip
	
	\ref{propitem:cs}  $\Rightarrow$ \ref{propitem:sh}:  Assume $\mathcal{E}$ is curve semistable.  Then, as recalled above in \tref{thm:projectively flat}  \ref{rmkitem:projectivelyflatfiltration} and refined in \cref{cor:JordanHoelder by stable hermitian projectively flat}, there exists a filtration
	\[
	0 = \cE_0 \subset \cE_1 \subset \ldots \subset \cE_k = \cE
	\]
	where the successive quotients $\cF_i := \cE_i/\cE_{i-1}$ each are stable (with respect to any polarization on $A$) and admit a projectively flat hermitian connection and satisfy the property that $\mu(\cF_i) = \mu(\cE)$ as classes in $\NS(A) \otimes \bQ$.
	
	A choice of a projectively flat hermitian connection gives the projective bundle $\bP(\cF_i)$ a flat structure, so it is induced by a representation $\pi_1(A) \to \PGL_{n_i}(\bC)$ for $n_i$ equal to the rank of $\cF_i$. Since translation on $A$ induces the identity on $\pi_1(A)$, the pull back of $\bP(\cF_i)$ by a translation $t_x : A \to A$ is in fact isomorphic to $\bP(\cF_i)$. An isomorphism $\bP(\cF_i) \simeq t_x^*\bP(\cF_i) = \bP(t_x^*\cF_i)$ comes from a line bundle $\alpha$ (depending on $x$) and an isomorphism $t_x^* \cF_i \simeq \cF_i \otimes \alpha$ of vector bundles. Thus $\cF_i$ is semihomogeneous.

	Therefore the vector bundle $\cE$ is an iterated extension of stable semihomogeneous vector bundles, all of the same slope class.  By \cite[Prop.~6.17]{Mukai1978:SemihomogeneousVectorBundles} nonisomorphic simple semihomogeneous vector bundles $\cF \not\simeq \cF'$ of the same slope class have no nontrivial extensions:
	\[
	\Ext^1(\cF,\cF') = \rH^1(A, \cHom(\cF,\cF')) = 0.
	\]
	It follows by induction on the length $k$ of the filtration that $\cE$ is a direct sum $\cE = \bigoplus_{j=1}^s \cE_j$ of vector bundles $\cE_j$ that are iterated extensions of simple semihomogeneous vector bundles $\cG_j$, with the $\cG_j$ pairwise nonisomorphic but all of the same slope class $\mu(\cE)$. In the language of \cite[Prop.~6.4]{Mukai1978:SemihomogeneousVectorBundles} the vector bundle $\cE_j$ is $\cG_j$-potent and thus semihomogeneous of slope class $\mu(\cE)$. By \cite[Prop.~6.9]{Mukai1978:SemihomogeneousVectorBundles} the category of semihomogeneous vector bundles with the same slope class $\mu(\cE)$ is closed under direct summands. Thus $\cE$ is semihomogeneous as claimed. 
\end{proof}

\subsection{Nakano positivity}

Recall that a vector bundle $\cE$ is Nakano positive if there is a hermitian metric $h$ on $\cE$ for which the associated curvature form on $\cE$ is pointwise positive definite.

\begin{prop}
	\label{prop:proj-flat-nak}
	Let $X$ be a smooth projective complex variety, and let $(\cE,h)$ be a projectively flat hermitian vector bundle on $X.$  Then the following are equivalent:	
	\begin{enumerate}[label=(\alph*)]
		\item 
		\label{propitem:Nakano1} 
		$\cE$ is Nakano positive.
		\item 
		\label{propitem:Nakano2} 
		$\cE$ is ample.
		\item 
		\label{propitem:Nakano3} 
		$\det{\cE}$ is ample.
	\end{enumerate} 
\end{prop}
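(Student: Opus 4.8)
The plan is to run the cycle $\ref{propitem:Nakano1}\Rightarrow\ref{propitem:Nakano2}\Rightarrow\ref{propitem:Nakano3}\Rightarrow\ref{propitem:Nakano1}$, with only the last arrow relying on projective flatness. For $\ref{propitem:Nakano1}\Rightarrow\ref{propitem:Nakano2}$ I would invoke the standard implications that a Nakano positive bundle is \emph{a fortiori} Griffiths positive, and that Griffiths positive bundles are ample. For $\ref{propitem:Nakano2}\Rightarrow\ref{propitem:Nakano3}$ I would use that $\det\cE$ is the image of the natural surjection $\cE^{\otimes r}\surj\bigwedge^r\cE = \det\cE$, so it is a quotient of the ample bundle $\cE^{\otimes r}$ and hence itself ample. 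Neither of these two steps uses the hermitian structure.

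The substance is in $\ref{propitem:Nakano3}\Rightarrow\ref{propitem:Nakano1}$. First I would record that projective flatness of $(\cE,h)$ means the Chern curvature has the scalar shape $\Theta_h = \eta\cdot\id_\cE$, where $\eta = \tfrac1r\tr(\Theta_h)$ is a global real $(1,1)$-form with $[\tfrac{i}{2\pi}\eta] = \tfrac1r\rc_1(\cE) = \mu(\cE)$ in $\rH^2(X,\bR)$. For a curvature of this form, Nakano positivity is equivalent to pointwise positivity of the single $(1,1)$-form $\tfrac{i}{2\pi}\eta$, since the Nakano quadratic form splits as a sum over the bundle indices of copies of the quadratic form attached to $\eta$.

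Next I would exploit the hypothesis: $\det\cE$ ample means $\rc_1(\cE) = r\,\mu(\cE)$ is an ample class, so $\mu(\cE) = [\tfrac{i}{2\pi}\eta]$ is a K\"ahler class; fix a K\"ahler form $\omega_0$ representing it. Since $X$ is projective and therefore compact K\"ahler, the $\partial\bar\partial$-lemma produces a smooth real function $\varphi$ with $\omega_0 - \tfrac{i}{2\pi}\eta = \tfrac{i}{2\pi}\partial\bar\partial\varphi$. Conformally rescaling to the metric $e^{-\varphi}h$ changes the curvature by $\partial\bar\partial\varphi\cdot\id_\cE$, so that $\tfrac{i}{2\pi}$ times the new curvature equals $\omega_0\cdot\id_\cE$; as $\omega_0>0$ this metric is Nakano positive, proving $\ref{propitem:Nakano1}$.

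The one point that needs care — and the only genuine obstacle — is precisely this last maneuver: ampleness of $\det\cE$ delivers positivity only of the \emph{cohomology class} $\mu(\cE)$, not pointwise positivity of the specific representative $\eta$ coming from $h$. Closing that gap is where the $\partial\bar\partial$-lemma is indispensable, and it is crucial here that projective flatness forces the curvature to be a scalar form times the identity, so that a \emph{single} global conformal factor $e^{-\varphi}$ simultaneously corrects the curvature in every bundle direction at once.
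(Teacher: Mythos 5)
Your proposal is correct, and its overall architecture coincides with the paper's: both run the cycle \ref{propitem:Nakano1} $\Rightarrow$ \ref{propitem:Nakano2} $\Rightarrow$ \ref{propitem:Nakano3} $\Rightarrow$ \ref{propitem:Nakano1}, and your quotient argument $\cE^{\otimes r} \surj \det\cE$ is exactly the content of the paper's remark that Schur functors preserve ampleness. The genuine difference lies in the two analytic arrows: the paper disposes of \ref{propitem:Nakano1} $\Rightarrow$ \ref{propitem:Nakano2} and of \ref{propitem:Nakano3} $\Rightarrow$ \ref{propitem:Nakano1} purely by citation to \cite[Lemma~1.5, Proposition~1.16, Lemma~2.3]{Umemura1973:ResultsTheoryVector}, whereas you actually prove the hard arrow \ref{propitem:Nakano3} $\Rightarrow$ \ref{propitem:Nakano1}: projective flatness forces $\Theta_h=\eta\cdot\id_\cE$, the Nakano form then splits into $r$ copies of the hermitian form attached to $i\eta$, and since ampleness of $\det\cE$ makes $\mu(\cE)=[\tfrac{i}{2\pi}\eta]$ a K\"ahler class, the $\partial\bar\partial$-lemma on the compact K\"ahler manifold $X$ produces a single global conformal factor $e^{-\varphi}$ correcting $\tfrac{i}{2\pi}\eta$ to a genuine K\"ahler form. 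This is in substance a self-contained reproof of Umemura's Lemma 2.3, and it parallels the conformal-rescaling step (via \cite[Proposition~4.2.4]{Kobayashi1987:DifferentialGeometryComplex}) that the paper itself performs in the proof of \tref{thm:projectively flat}. Your individual steps are all sound: Nakano positive $\Rightarrow$ Griffiths positive $\Rightarrow$ ample is standard, the curvature bookkeeping under $h \mapsto e^{-\varphi}h$ is right, and $\eta$ is indeed closed (being $\tfrac1r$ times the curvature of the induced metric on $\det\cE$), so the $\partial\bar\partial$-lemma applies. What your route buys is transparency --- it isolates exactly where projective flatness (scalar-shaped curvature, so one scalar correction suffices in all bundle directions at once) and compactness/K\"ahlerness enter; what the paper's route buys is brevity, at the cost of outsourcing the substance to the literature.
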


\begin{proof}
	If $\cE$ is ample, then any Schur functor applied to $\cE$ is also ample, in particular $\det(\cE)$ is ample. This shows \ref{propitem:Nakano2}  $\Rightarrow$ \ref{propitem:Nakano3}, and the implications \ref{propitem:Nakano1}  $\Rightarrow$ \ref{propitem:Nakano2} follows from 
	\cite[Lemma~1.5, Proposition~1.16]{Umemura1973:ResultsTheoryVector}. It remains to verify \ref{propitem:Nakano3} $\Rightarrow$ \ref{propitem:Nakano1}, and this is \cite[Lemma~2.3]{Umemura1973:ResultsTheoryVector}.
\end{proof}

\begin{prop}
	\label{prop:ample css implies Nakano}
	Let $\cE$ be an  curve semistable vector bundle on a smooth projective variety $X$ with $\det(\cE)$ ample.  Then $\cE$ is Nakano positive.
\end{prop}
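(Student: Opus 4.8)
The plan is to reduce positivity of $\cE$ to its graded pieces under the projectively flat filtration, and then to build a Nakano positive metric on $\cE$ by a rescaling argument that tames the second fundamental forms of the filtration. The genuinely positive contribution will come from the common slope of the graded pieces, while the obstruction coming from the extensions will be pushed to zero.

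First I would invoke \tref{thm:projectively flat}~\ref{rmkitem:projectivelyflatfiltration} to fix a filtration $0 = \cE_0 \subset \cE_1 \subset \cdots \subset \cE_k = \cE$ whose successive quotients $\cF_i = \cE_i/\cE_{i-1}$ are hermitian projectively flat with $\mu(\cF_i) = \mu(\cE)$ for all $i$. Since $\det\cE$ is ample and $\rc_1(\det\cF_i) = \rank(\cF_i)\,\mu(\cE) = \tfrac{\rank(\cF_i)}{r}\,\rc_1(\det\cE)$ is a positive rational multiple of the ample class $\rc_1(\det\cE)$, each $\det\cF_i$ is ample, so by \pref{prop:proj-flat-nak} each $\cF_i$ is Nakano positive. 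More precisely, because $\mu(\cE) = \tfrac1r\rc_1(\det\cE)$ is ample I would fix once and for all a Kähler form $\gamma$ representing $\mu(\cE)$, and choose the projectively flat hermitian metric on each $\cF_i$ so that its Chern curvature equals exactly $\gamma\otimes\mathrm{Id}_{\cF_i}$; this only amounts to choosing the metric on $\det\cF_i$ in the class $\rank(\cF_i)\,\mu(\cE)$. The crucial role of curve semistability is precisely that all slopes $\mu(\cF_i)$ coincide, so a single positive form $\gamma$ serves every graded piece simultaneously.

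Next I would transport these metrics to $\cE$ via a $C^\infty$ splitting $\cE \cong \bigoplus_i \cF_i$, under which the holomorphic structure $\bar\partial_\cE$ is block upper triangular with the $\bar\partial_{\cF_i}$ on the diagonal and the second fundamental forms $\beta_{ij} \in A^{0,1}(\cHom(\cF_j,\cF_i))$, for $i<j$, off the diagonal. For the orthogonal direct sum metric the Chern curvature of $\cE$ equals $\gamma\otimes\mathrm{Id}_\cE$ plus correction terms that are polynomial in the $\beta_{ij}$ and their first derivatives, with no constant term. The main step is to make these corrections uniformly small: rescaling the $i$-th summand by the constant factor $t^{2i}$ leaves each diagonal curvature $\gamma\otimes\mathrm{Id}_{\cF_i}$ unchanged, while multiplying the off-diagonal data $\beta_{ij}$, and hence $\partial\beta_{ij}$, by $t^{\,i-j}\to 0$ as $t\to\infty$. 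Since $X$ is compact, the resulting curvatures $\Theta_{\cE,h_t}$ then converge uniformly to $\gamma\otimes\mathrm{Id}_\cE$, which is Nakano positive; hence for $t$ large enough $\Theta_{\cE,h_t} \geq \tfrac12\,\gamma\otimes\mathrm{Id}_\cE$ as Hermitian forms on $T^{1,0}X\otimes\cE$, and $\cE$ is Nakano positive.

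The hard part is conceptual rather than computational: Nakano positivity is not inherited by extensions, so the naive direct sum metric fails, as the second fundamental forms drive the curvature negative on the sub-quotients. The point of the argument is that this obstruction lives entirely in the $\beta_{ij}$, which the diagonal rescaling of the filtration shrinks to zero, whereas the honestly positive term $\gamma\otimes\mathrm{Id}_\cE$ is fixed and therefore eventually dominates. To make the curvature comparison precise I would only need to quote the standard second fundamental form formula for the Chern curvature of a filtered hermitian bundle; the remainder is the bookkeeping of the rescaling.
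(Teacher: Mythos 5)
Your proposal is correct and follows essentially the same route as the paper: both invoke the filtration of \tref{thm:projectively flat}~\ref{rmkitem:projectivelyflatfiltration}, deduce Nakano positivity of each graded piece $\cF_i$ from ampleness of $\det(\cF_i)$ (which holds because $\mu(\cF_i)=\mu(\cE)$ is a positive multiple of an ample class) via \pref{prop:proj-flat-nak}, and then pass the positivity through the extensions. The only difference is one of packaging: where you re-derive the extension step by hand with the second-fundamental-form rescaling argument, the paper simply cites \cite[Lemma~2.2]{Umemura1973:ResultsTheoryVector} (extensions of Nakano positive bundles are Nakano positive) and inducts on the filtration; your rescaling is precisely the standard proof of that lemma.
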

\begin{proof}
	Indeed, the characterization in \tref{thm:projectively flat}  \ref{rmkitem:projectivelyflatfiltration}
	implies that $\cE$ admits a filtration 
	\[
	0 = \cE_0 \subset \cE_1 \subset \ldots \cdots \cE_{k-1} \subset \cE_k = \cE
	\]
	whose successive quotients $\cF_i := \cE_i/\cE_{i-1}$ are hermitian projectively flat and 
	$\mu(\cF_i) = \mu(\cE)$.  
	By \cite[Lemma~2.2]{Umemura1973:ResultsTheoryVector}
	extensions of Nakano positive vector bundles are also Nakano positive. Hence, by induction on $\rank(\cE)$,  it suffices to show that all $\cF_i$ are Nakano positive. By \pref{prop:proj-flat-nak}, the subquotient $\cF_i$ is Nakano positive if and only if $\det(\cF_i)$ is ample. But $\mu(\cF_i)$ equals $\mu(\cE)$, so these $\det(\cF_i)$  numerically are positive multiples of $\det(\cE)$, hence ample. 
\end{proof}

\subsection{Variations on a result of Miyaoka} 
\label{sec:Miyaoka}

Let $\cE$ be a vector bundle of rank $r$ on a smooth projective variety $S$. We will study the convex Fujita number of $X = \bP(\cE)$, the associated projective space bundle $\pi: \bP(\cE) \to S$. The Picard group of $\bP(\cE)$ sits in a short exact sequence
\[
0 \to \Pic(S) \xrightarrow{\pi^\ast} \Pic(\bP(\cE)) \xrightarrow{\res} \Pic(\bP^{r-1}) \to 0
\]
where $\res$ restricts to a fibre and $\Pic(\bP^{r-1} )= \bZ$. The sequence splits using the tautological line bundle $\dO(1)$ associated to $\cE$. Every line bundle on $\bP(\cE)$ is uniquely of the form $\pi^\ast \cM(a)$.

A classical result of Miyaoka for semistable vector bundles on curves \cite[Theorem 3.1]{Miyaoka1987:ChernClassesKodaira} can be extended to curve semistable vector bundles as follows. For  a generalization of  \cite[Theorem 3.1]{Miyaoka1987:ChernClassesKodaira} in a different direction by Misra and Ray we refer to \cite{MisraRay2022:NefConesProjective}.

\begin{prop}
\label{prop:NEFconePE}
Let $\pi:  \bP(\cE) \to S$ be the projective space bundle of a curve semistable vector bundle $\cE$ of rank $r$ on a smooth projective  variety $S$. A line bundle $\cL = \pi^\ast \cM (a)$ is nef (resp.\ ample) if and only if $a \geq 0$ (resp.\ $a > 0$) and 
\[
\cM^{\otimes r} \otimes (\det(\cE))^{\otimes a}
\]
is nef (resp.\ ample) on $S$. 
\end{prop}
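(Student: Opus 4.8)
\emph{Plan.} Writing $\xi = \rc_1(\dO(1))$ and $\nu \coloneqq \rc_1(\cM) + a\,\mu(\cE) \in \rN^1(S)_\bQ$, the definition of $\lambda_\cE$ gives
\[
\rc_1(\cL) = a\,\xi + \pi^\ast\rc_1(\cM) = a\,\lambda_\cE + \pi^\ast\nu .
\]
Since $r\nu = \rc_1\!\big(\cM^{\otimes r}\otimes\det(\cE)^{\otimes a}\big)$ and nefness/ampleness of a $\bQ$-class is unchanged under positive scaling, the asserted condition on $\cM^{\otimes r}\otimes\det(\cE)^{\otimes a}$ is equivalent to $\nu$ being nef (resp.\ ample). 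The plan rests on two structural facts about $\lambda_\cE$. First, by \tref{thm:projectively flat}\,\ref{rmkitem:lambdanef} the class $\lambda_\cE$ is nef, and as $\pi^\ast\mu(\cE)$ vanishes on fibres, $\lambda_\cE$ restricts to the ample class $\rc_1(\dO(1))$ on every fibre $F\cong\bP^{r-1}$, so $\lambda_\cE$ is $\pi$-ample. Second, from $\ch(\cE)=\rank(\cE)\cdot\exp(\mu(\cE))$ (the corollary to \tref{thm:projectively flat}) one gets $c(\cE)=(1+\mu(\cE))^r$, so the Grothendieck relation collapses to
\[
\lambda_\cE^{\,r} = (\xi - \pi^\ast\mu(\cE))^r = 0, \qquad \pi_\ast\big(\lambda_\cE^{\,r-1}\big) = [S].
\]

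\emph{Sufficiency.} In the nef case, if $a\ge 0$ and $\nu$ is nef, then $a\lambda_\cE$ and $\pi^\ast\nu$ are both nef and hence so is their sum $\rc_1(\cL)$. In the ample case, suppose $a>0$ and $\nu$ ample; then $a\lambda_\cE$ is nef and $\pi$-ample. I would check positivity on $\overline{\NE}(\bP(\cE))\setminus\{0\}$ and invoke Kleiman: for $0\neq z$ with $\pi_\ast z\neq 0$ one has $\pi^\ast\nu\cdot z=\nu\cdot\pi_\ast z>0$ and $a\lambda_\cE\cdot z\ge 0$; for $0\neq z$ with $\pi_\ast z=0$, i.e.\ $z\in\overline{\NE}(\bP(\cE)/S)\setminus\{0\}$, the relative Kleiman criterion gives $a\lambda_\cE\cdot z>0$ while $\pi^\ast\nu\cdot z=0$. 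In either case $\rc_1(\cL)\cdot z>0$, so $\cL$ is ample.

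\emph{Necessity.} Restricting to a fibre $F\cong\bP^{r-1}$ gives $\rc_1(\cL)|_F = a\,\rc_1(\dO(1))$, so $\cL$ nef (resp.\ ample) forces $a\ge 0$ (resp.\ $a>0$). To recover $\nu$, I would push $\rc_1(\cL)$ down against powers of $\lambda_\cE$. For a curve $\gamma\subset S$, the identities above yield
\[
\nu\cdot\gamma = \int_{\pi^{-1}(\gamma)} \rc_1(\cL)\cdot\lambda_\cE^{\,r-1},
\]
which is nonnegative as an intersection of the nef classes $\rc_1(\cL)$ and $\lambda_\cE$ on the projective variety $\pi^{-1}(\gamma)$; hence $\nu$ is nef. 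If instead $\cL$ is ample, I would run Nakai--Moishezon on $S$: for an irreducible $V\subseteq S$ of dimension $k\ge 1$, only the term $j=r-1$ survives in the binomial expansion of $\rc_1(\cL)^{\,k+r-1}$ on $\pi^{-1}(V)$, since $\lambda_\cE^{\,r}=0$ kills $j\ge r$ while $(\pi^\ast\nu)^{\,k+r-1-j}$ is the pullback of $\nu^{\,>k}=0$ for $j<r-1$, leaving
\[
\int_{\pi^{-1}(V)} \rc_1(\cL)^{\,k+r-1} = \binom{k+r-1}{r-1} a^{\,r-1}\,(\nu^{k}\cdot V).
\]
As the left-hand side is positive and the binomial factor is positive, $\nu^{k}\cdot V>0$ for all such $V$, so $\nu$ is ample.

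\emph{Main obstacle.} The computational heart is the degeneracy $\lambda_\cE^{\,r}=0$ together with nefness of $\lambda_\cE$: these are precisely what let the intersection theory of $\bP(\cE)$ in the relevant degrees reduce to that of $S$. I expect the delicate point to be the ample case in \emph{both} implications. For necessity, nefness of $\lambda_\cE$ is too weak to give pointwise positivity directly, so one must pass through Nakai--Moishezon and isolate the single surviving monomial; for sufficiency, the sum of a $\pi$-ample and a pulled-back ample class is only manifestly nef, and upgrading to ample genuinely requires the relative Kleiman criterion (equivalently, the standard fact that a relatively ample class plus the pullback of an ample class is ample).
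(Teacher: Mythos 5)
Your proof is correct, but it takes a genuinely different route from the paper's. The paper never does intersection theory on $\bP(\cE)$ at all: since nefness is tested on curves and curve semistability is by definition preserved under pull back to curves, it base-changes along every $f\colon C \to S$, quotes Miyaoka's description of $\Nef(\bP(f^\ast\cE))$ for the semistable bundle $f^\ast\cE$, and assembles these into the nef statement; the ample statement then comes for free, because $\pi^\ast\cM(a)\mapsto\bigl(\cM^{\otimes r}\otimes\det(\cE)^{\otimes a},\,a\bigr)$ induces a linear isomorphism $\NS(\bP(\cE))_\bR\cong\NS(S)_\bR\times\bR$ carrying $\Nef(\bP(\cE))$ onto $\Nef(S)\times\bR_{\geq 0}$, so one simply passes to interiors. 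You instead stay on $X=\bP(\cE)$ and use the two packaged consequences of curve semistability recorded in the paper, namely nefness of $\lambda_\cE$ and $\ch(\cE)=\rank(\cE)\exp(\mu(\cE))$; your computations are right (in the paper's convention $\pi_\ast\dO(k)=\Sym^k\cE$ the Grothendieck relation indeed collapses to $(\xi-\pi^\ast\mu(\cE))^r=0$ once $c(\cE)=(1+\mu(\cE))^r$, and the identities $\pi_\ast(\lambda_\cE^{r-1})=[S]$, $\nu\cdot\gamma=\int_{\pi^{-1}(\gamma)}\rc_1(\cL)\cdot\lambda_\cE^{\,r-1}$ and the surviving-monomial computation all check out). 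What each approach buys: yours is self-contained on $X$ and makes the numerical mechanism transparent; the paper's is logically much lighter, since it uses only the definition of curve semistability plus Miyaoka on curves, whereas your reliance on $\ch(\cE)=\rank(\cE)\exp(\mu(\cE))$ invokes the deep implication \ref{rmkitem:curvesemistable} $\Rightarrow$ \ref{rmkitem:projectivelyflatfiltration} of \tref{thm:projectively flat} (Nakayama, Kobayashi--Hitchin), and the paper's cone-isomorphism trick treats nef and ample uniformly where you need two separate arguments (relative Kleiman for sufficiency, Nakai--Moishezon for necessity).

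Two points in your ample-sufficiency step need repair, though both are local and fixable. First, the parenthetical ``a relatively ample class plus the pullback of an ample class is ample'' is false as stated: on $X=\bP(\dO\oplus\dO)=\bP^1\times\bP^1\to S=\bP^1$ the class $\pi^\ast\dO(-2)(1)$ is $\pi$-ample and $\dO(1)$ is ample on $S$, yet $\pi^\ast\dO(-1)(1)$ is not even nef. The correct statement --- which is what your argument actually uses --- also requires the relatively ample class to be nef. Second, your Kleiman argument in the case $\pi_\ast z=0$ tacitly identifies $\ker(\pi_\ast)\cap\overline{\NE}(X)$ with the relative cone $\overline{\NE}(X/S)$ generated by $\pi$-contracted curves, which is what the relative Kleiman criterion refers to; this identification needs justification in general, but for a projective bundle it is immediate: $(\pi_\ast,\ \xi\cdot{-})$ identifies $\rN_1(X)\cong\rN_1(S)\oplus\bR$, so any nonzero $z\in\overline{\NE}(X)$ with $\pi_\ast z=0$ equals $t\ell$ with $\ell$ the class of a line in a fibre and $t>0$ (pair $z$ with any ample $\pi^\ast\cM(a)$, noting $a>0$), whence $\lambda_\cE\cdot z=t>0$ directly, with no appeal to relative Kleiman. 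Alternatively one can avoid cones altogether: since $a\lambda_\cE$ is nef and $\pi$-ample and $\nu$ is ample, choose $m\gg0$ with $a\lambda_\cE+\pi^\ast(m\nu)$ ample and write
\[
a\lambda_\cE+\pi^\ast\nu=\tfrac{1}{m}\bigl(a\lambda_\cE+\pi^\ast(m\nu)\bigr)+\tfrac{m-1}{m}\,a\lambda_\cE,
\]
an ample class plus a nef class.
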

\begin{proof}
Let $h: C \to \bP(\cE)$ be an arbitrary smooth projective curve mapping to $\bP(\cE)$. The intersection number $(\cL \bullet C) = \deg_C(h^\ast \cL)$ can be computed after base change along $f = \pi \circ h$ as an intersection number of a curve on the pull back projective space bundle  $\pi: \bP(f^\ast \cE) \to C$ with respect to the pull back of $\cL$ along the projection  $\pr: \bP(f^\ast \cE) \to \bP(\cE)$. This shows that $\cL$ is nef on $\bP(\cE)$  if and only if for all $f: C \to S$ the pull backs $\pr^\ast \cL = \pi^\ast(f^\ast \cM)(a)$ are nef line bundles on $\bP(f^\ast \cE)$. 

Since by assumption $f^\ast \cE$ is still semistable, Miyaoka  \cite[Theorem 3.1]{Miyaoka1987:ChernClassesKodaira}  determines the nef cone in $\NS(\bP(f^\ast \cE))_\bR$ as the cone  generated by pull backs of nef line bundles on $C$ and the normalized hyperplane class $\dO(r) \otimes \pi^\ast \det(f^\ast\cE)^{-1}$. This translates into the following formula for the nef cone:
\begin{align*}
\Nef(\bP(f^\ast \cE)) & = \langle \pi^\ast \Nef(C), \dO(r) \otimes \pi^\ast \det(f^\ast\cE)^{-1} \rangle_{\bR_{\geq 0}} \\
& =  \langle\{ \pi^\ast (\cM)(a) \ ; \  a \geq 0 \text{ and } \cM^{\otimes r} \otimes \det(f^\ast \cE)^{\otimes a} \in \Nef(C)\} \rangle_{\bR_{\geq 0}} .
\end{align*}
Therefore $\cL = \pi^\ast \cM(a)$ on $\bP(\cE)$ is nef if and only if $a \geq 0$ and for all $f: C \to S$ 
\[
f^\ast \big(\cM^{\otimes r} \otimes (\det(\cE))^{\otimes a}\big) = (f^\ast \cM)^{\otimes r} \otimes \det(f^\ast(\cE))^{\otimes a}
\]
is nef on $C$. This is equivalent to the claimed characterization of nefness for $\cL$. 

We will better understand the ample cone as the interior of the nef cone  after the following transformation. 
The map 
\[
\NS(\bP(\cE))  \xrightarrow{} \NS(S) \times \bZ, \quad \pi^\ast \cM (a) \mapsto \big(\cM^{\otimes r} \otimes \det(\cE)^{\otimes a}, a\big)
\]
induces upon scalar extension an orientation preserving linear isomorphism
\[
\NS(\bP(\cE))_{\bR} \xrightarrow{\sim} \NS(S)_{\bR} \times \bR
\]
which maps the nef cone of $\bP(\cE)$ bijectively onto the product 
\begin{equation}
\label{eq:nef Pcsst}
\Nef(\bP(\cE)) \xrightarrow{\sim} \Nef(S) \times \bR_{\geq 0}.
\end{equation}
The description of the ample cone of $\bP(\cE)$ is now clear.
\end{proof}

\section{Convex Fujita numbers of projective bundles} 
\label{sec:FujitaPE}

\subsection{Preliminaries}
We start with a lower bound for the convex Fujita number taking into account restrictions to fibres.

\begin{prop}
\label{prop:FNonPElowerbound}
Let $\cE$ be a vector bundle of rank $r \geq 2$ on a smooth projective variety $S$. Then $$
\conFN(\bP(\cE)) \geq r$$
\end{prop}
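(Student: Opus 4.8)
The plan is to unwind the definition of the convex Fujita number and produce, for the single value $t = r-1$, one explicit choice of ample line bundles on $X = \bP(\cE)$ whose adjoint product fails to be globally generated. Indeed, if we had $\conFN(X) \leq r-1$, then by definition for \emph{every} $(r-1)$-tuple of ample line bundles $\cL_1, \dots, \cL_{r-1}$ the bundle $\omega_X \otimes \cL_1 \otimes \dots \otimes \cL_{r-1}$ would be globally generated; exhibiting one tuple where this fails therefore forces $\conFN(X) \geq r$. Note that this argument needs no hypothesis on $\cE$ beyond being a vector bundle of rank $r \geq 2$.

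First I would pass to a fibre. Let $F \cong \bP^{r-1}$ be a fibre of $\pi \colon \bP(\cE) \to S$. Since $\omega_X = \omega_{X/S} \otimes \pi^\ast \omega_S$ with $\omega_{X/S}|_F = \omega_{\bP^{r-1}} = \dO_F(-r)$, while $\pi^\ast\omega_S|_F$ is trivial, we obtain $\omega_X|_F \cong \dO_F(-r)$. The elementary tool is the restriction principle for global generation: for a coherent sheaf $\cG$ on $X$ and a point $x \in F$, the evaluation $H^0(X,\cG) \to \cG \otimes k(x)$ factors through $H^0(F, \cG|_F) \to \cG|_F \otimes k(x)$, so if $\cG|_F$ fails to be globally generated at $x$, then neither is $\cG$. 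Next I would choose the line bundles: fixing an ample line bundle $\cA$ on the projective variety $S$, the relative ampleness of $\dO_{\bP(\cE)}(1)$ guarantees that $\cL \coloneqq \dO_{\bP(\cE)}(1) \otimes \pi^\ast \cA^{\otimes n}$ is ample on $X$ for $n \gg 0$, and $\cL|_F = \dO_F(1)$. Taking $\cL_1 = \dots = \cL_{r-1} = \cL$ gives
\[
\bigl(\omega_X \otimes \cL^{\otimes(r-1)}\bigr)\big|_F \cong \dO_F(-r) \otimes \dO_F(r-1) = \dO_F(-1),
\]
which has no nonzero global sections on $F \cong \bP^{r-1}$ and is thus not globally generated. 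By the restriction principle the bundle $\omega_X \otimes \cL^{\otimes(r-1)}$ is not globally generated on $X$, and hence $\conFN(X) \geq r$.

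I do not anticipate a genuine obstacle here, as the argument reduces to a single fibre computation. The only points needing a little care are the existence of an ample line bundle with fibre degree exactly $1$ (handled by relative ampleness of $\dO_{\bP(\cE)}(1)$ together with a large pullback twist, using only projectivity of $S$) and the correct normalization $\omega_X|_F \cong \dO_F(-r)$. Conceptually, the bound uses the full force of allowing $t = r-1$ summands, each contributing fibre degree $+1$: together they cancel all but one unit of the $-r$ coming from $\omega_X$ along the fibre, leaving $\dO_F(-1)$ with vanishing $H^0$.
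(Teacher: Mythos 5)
Your proposal is correct and takes essentially the same route as the paper: both exhibit an ample line bundle $\cL$ of fibre degree one (the paper writes it as $\dO_{\bP(\cE)}(1) \otimes \pi^\ast \cM \cong \dO_{\bP(\cE \otimes \cM)}(1)$ with $\cE \otimes \cM$ an ample vector bundle, which is the same bundle you obtain via relative ampleness and a large pullback twist), and then conclude that $\omega_X \otimes \cL^{\otimes (r-1)}$ restricts to $\dO_{\bP^{r-1}}(-1)$ on a fibre and hence cannot be globally generated. The only cosmetic difference is that the paper computes the full line bundle on $\bP(\cE)$ via the relative canonical bundle formula before restricting, whereas you restrict to the fibre from the start; the substance is identical.
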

\begin{proof}
It suffices to exhibit an ample line bundle $\cL$ on $\bP(\cE)$ such that $\omega_{\bP(\cE)} \otimes \cL^{\otimes (r-1)}$ is not globally generated.  Let $\pi : \bP(\cE) \to S$ be the projection map.  Recall that 
\begin{equation}
\label{eq:canonical bundle formula}
	\omega_{\bP(\cE)} \cong \cO_{\bP(\cE)}(-r) \otimes \pi^{\ast}(\omega_{S} \otimes \det{\cE})
\end{equation} 
Let $\cM$ be an ample line bundle on $S$ such that $\cE \otimes \cM$ is an ample vector bundle.  Then $\cL := \cO_{\bP(\cE)}(1) \otimes \pi^{\ast}\cM \cong \cO_{\bP(\cE \otimes \cM)}(1)$ is an ample line bundle on $\bP(\cE),$ and
\[
	\omega_{\bP(\cE)} \otimes \cL^{\otimes (r-1)} \cong \cO_{\bP(\cE)}(-1) \otimes \pi^{\ast}(\omega_{S} \otimes \det{\cE} \otimes \cM^{\otimes (r-1)})
\]
Since the restriction of this line bundle to any fibre of $\pi$ (identified with $\bP^{r-1}$) is $\cO_{\bP^{r-1}}(-1),$ it cannot be globally generated.  Thus $\conFN(\bP(\cE)) \geq r$ as claimed.
\end{proof}

Next we show a criterion for global generation of adjoint bundles on projective bundles by making use of relative global generation for the projection.

\begin{lem}
\label{lem:FNonPEuppperboundCriterion}
Let $\cE$ be a vector bundle of rank $r \geq 2$ on a smooth projective variety $S$, and let $\pi: \bP(\cE) \to S$  be the associated projective bundle. Let $t \geq r$ and $\cL_i = \pi^\ast\cM_i(a_i)$ for $i=1, \ldots, t$ ample line bundles on $X=\bP(\cE)$. We abbreviate $\cM \coloneq \bigotimes_{i=1}^t \cM_i$ and $\cL \coloneq \bigotimes_{i=1}^t \cL_i$ and  $a \coloneq \sum_{i=1}^t a_i$, and moreover
\[
\cF \coloneq \Sym^{a-r}(\cE) \otimes \det(\cE) \otimes \cM.
\]
Then the adjoint bundle $\omega_X \otimes \cL$ is globally generated if $\omega_S \otimes \cF$ is globally generated on $S$.  
\end{lem}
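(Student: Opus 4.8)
The plan is to realize $\omega_X \otimes \cL$ as a quotient of a trivial bundle by combining relative global generation along $\pi$ with global generation of the direct image $\pi_\ast(\omega_X \otimes \cL)$ on $S$. First I would rewrite the adjoint bundle: since $\cL = \bigotimes_{i=1}^t \pi^\ast \cM_i(a_i) = \pi^\ast \cM(a)$, the canonical bundle formula \eqref{eq:canonical bundle formula} gives
\[
\omega_X \otimes \cL \cong \cO_X(a-r) \otimes \pi^\ast\big(\omega_S \otimes \det(\cE) \otimes \cM\big).
\]

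Next I would verify that the twist $a-r$ is nonnegative, which is what makes the direct image nonzero. Each $\cL_i$ is ample, so its restriction to a fibre $\bP^{r-1}$ of $\pi$, namely $\cO_{\bP^{r-1}}(a_i)$, is ample; hence $a_i \geq 1$ and $a = \sum_{i=1}^t a_i \geq t \geq r$, i.e. $a-r \geq 0$. Consequently $\pi_\ast \cO_X(a-r) = \Sym^{a-r}(\cE)$, and the projection formula yields
\[
\pi_\ast(\omega_X \otimes \cL) \cong \Sym^{a-r}(\cE) \otimes \omega_S \otimes \det(\cE) \otimes \cM = \omega_S \otimes \cF.
\]

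The relative global generation is the tautological input: the universal quotient $\pi^\ast \cE \surj \cO_X(1)$ induces, upon taking $(a-r)$-th symmetric powers, a surjection $\pi^\ast \Sym^{a-r}(\cE) \surj \cO_X(a-r)$, and tensoring with $\pi^\ast(\omega_S \otimes \det(\cE) \otimes \cM)$ shows that the natural evaluation map $\pi^\ast \pi_\ast(\omega_X \otimes \cL) \to \omega_X \otimes \cL$ is surjective.

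Finally I would assemble the two facts. Assuming $\omega_S \otimes \cF$ is globally generated, I would pull the surjection $\rH^0(S, \omega_S \otimes \cF) \otimes \cO_S \surj \omega_S \otimes \cF$ back along $\pi$ and compose it with the relative global generation surjection above, obtaining a surjection $\rH^0(S, \pi_\ast(\omega_X \otimes \cL)) \otimes \cO_X \surj \omega_X \otimes \cL$. Since $\rH^0(X, \omega_X \otimes \cL) = \rH^0(S, \pi_\ast(\omega_X \otimes \cL))$ by the definition of the direct image, this composite is exactly the evaluation map of $\omega_X \otimes \cL$, so $\omega_X \otimes \cL$ is globally generated. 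I expect the only delicate points to be the bookkeeping that forces $a-r \geq 0$ (without it the direct image vanishes and the whole argument collapses) together with the identification of sections that lets global generation descend from $\pi_\ast(\omega_X\otimes\cL)$ to $\omega_X\otimes\cL$; the geometric content itself is the standard passage from relative to absolute global generation.
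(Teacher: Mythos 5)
Your proof is correct, and its overall skeleton matches the paper's: establish $a_i \geq 1$ (hence $a \geq r$) by restricting to a fibre, identify $\pi_\ast(\omega_X \otimes \cL) = \omega_S \otimes \cF$ via the canonical bundle formula and the projection formula, produce a surjection $\pi^\ast(\omega_S \otimes \cF) \surj \omega_X \otimes \cL$, and then descend global generation. The one place where you genuinely diverge is the key surjectivity step. The paper argues cohomologically: fibrewise $\omega_X \otimes \cL$ restricts to $\dO(a-r)$ on $\bP^{r-1}$, which is globally generated with vanishing higher cohomology, so cohomology and base change identifies $\pi^\ast\pi_\ast(\omega_X \otimes \cL)|_{X_s}$ with $\dO_{X_s} \otimes \rH^0(X_s, \dO(a-r))$ and surjectivity follows fibre by fibre. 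You instead construct the surjection algebraically from the tautological quotient $\pi^\ast \cE \surj \cO_X(1)$: taking $(a-r)$-th symmetric powers (surjectivity of vector bundle maps is preserved by $\Sym$, and $\Sym$ commutes with $\pi^\ast$) and twisting by $\pi^\ast(\omega_S \otimes \det(\cE) \otimes \cM)$ gives the desired surjection directly as a map of sheaves, with no base change theorem needed. Your route is more elementary and self-contained; the paper's route is the one that generalizes when the relative situation is not tautological. One small caveat in your final assembly: your claim that the composite is \emph{exactly} the evaluation map of $\omega_X \otimes \cL$ silently uses that your tautological surjection agrees with the adjunction counit $\pi^\ast\pi_\ast(\omega_X \otimes \cL) \to \omega_X \otimes \cL$ under the identification $\pi_\ast \cO_X(a-r) = \Sym^{a-r}(\cE)$; this is true but deserves a word. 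Alternatively you can sidestep it entirely: any coherent sheaf receiving a surjection from a trivial bundle $\cO_X^{\oplus N}$ is globally generated (the images of the standard generators are global sections that generate), so the chain $\cO_X^{\oplus N} \surj \pi^\ast(\omega_S \otimes \cF) \surj \omega_X \otimes \cL$ already finishes the proof without identifying the map.
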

\begin{proof} 
The restriction of $\cL_i$ to a fibre is isomorphic to $\dO(a_i)$. Since this is ample, we deduce $a_i \geq 1$ and $a \geq r$. In particular $\cF$ is well defined. The formula for the canonical bundle \eqref{eq:canonical bundle formula} and the projection formula show
\[
\pi_\ast(\omega_X \otimes \cL) = \omega_S \otimes \Sym^{a-r}(\cE) \otimes \det(\cE) \otimes \cM = \omega_S \otimes \cF. 
\]  
Fibrewise $\omega_X \otimes \cL$ restricts to $\dO(a-r)$ and thus is globally generated with trivial higher cohomology. Cohomology and base change shows that the ``relative global generation'' map 
\[
\pi^\ast \pi_\ast(\omega_X \otimes \cL) \surj \omega_X \otimes \cL,
\]
is surjecttive, because for $s \in S$ with fibre $X_s = \pi^{-1}(s)$ we have
\begin{align*}
\pi^\ast \pi_\ast(\omega_X \otimes \cL)|_{X_s} & = \dO_{X_s} \otimes \big(\pi_\ast(\omega_X \otimes \cL)|_s\big) =  \dO_{X_s} \otimes \rH^0\big(X_s, (\omega_X \otimes \cL)|_{X_s}\big) \\
& \simeq \dO_{X_s} \otimes \rH^0\big(X_s,\dO(a-r)\big) \surj  \dO(a-r) \simeq (\omega_X \otimes \cL)|_{X_s}.
\end{align*}
It follows that $\omega_X \otimes \cL$ is globally generated if $\pi_\ast(\omega_X \otimes \cL)$ is globally generated.
\end{proof}

With \lref{lem:FNonPEuppperboundCriterion} it becomes evident that we must understand the positivity properties of $\cF = \pi_\ast(\omega_{X/S} \otimes \cL)$. The following definition will be applied to $\mu(\cE)$. 

\begin{defi}
\label{defi:denominator}
Let $\delta \in \NS(X) \otimes \bQ$ be a rational class. The \textbf{denominator} of $\delta$ is defined to be the smallest $n > 0$ such that $n\delta$ lies in the image of $\NS(X) \to \NS(X) \otimes \bQ$. 
\end{defi}

The denominator of $\delta$ is the order of the cyclic group $\langle \delta,  \NS(X)/\tors \rangle/\NS(X)/\tors$. For $\mu(\cE)$ the denominator divides the rank $r$. If $\cE$ is curve semistable and the denominator equals $r$, then $\cE$ is stable. Indeed, if $\cE' \subseteq \cE$ is a nontrivial subbundle of the same rational slope class, then $\mu(\cE) = \mu(\cE')$ has the same denominator and thus divides $\rank(\cE') < r$.

\begin{prop}
\label{prop:positivity of F}
Let $\cE$ be a curve semistable vector bundle of rank $r \geq 2$ on a smooth projective variety $S$, and let $\pi: \bP(\cE) \to S$  be the associated projective bundle. Let $t \geq r$ and $\cL_i = \pi^\ast\cM_i(a_i)$ for $i=1, \ldots, t$ ample line bundles on $X=\bP(\cE)$. We abbreviate $\cM \coloneq \bigotimes_{i=1}^t \cM_i$ and $\cL \coloneq \bigotimes_{i=1}^t \cL_i$ and  $a \coloneq \sum_{i=1}^t a_i$, and moreover $\cF = \pi_\ast(\omega_{X/S} \otimes \cL)$. Then the following hold.
\begin{enumerate}
	\item
	\label{propitem:slope class}
	If $a \geq r$, the slope class of $\cF$ is
	$\mu(\cF) = \sum_{i=1}^t \big(a_i \mu(\cE) + \mu(\cM_i)\big) = a \mu(\cE) + \mu(\cM)$.
	\item
	\label{propitem:F larger than ample}	
	If $t>r$, then there is an ample divisor $\Theta$ on $S$ such that 
	$\cF(-\Theta)$ has an ample determinant. 
	\item 
	\label{propitem:F larger than ample at r}	
	If $t=r$, then there is an ample divisor $\Theta$ on $S$ such that 
	$\cF(-\Theta)$ has an ample determinant, unless we are in the critical case, i.e.,
	\begin{enumerate}[label=(\roman*)]
		\item $\mu(\cE)$ has denominator $r$, and 
		\item all $a_i$ are pairwise congruent modulo $r$ and coprime to $r$, and
		\item the ample line bundle $\det(\cE)^{\otimes a/r} \otimes \cM$ on $S$ is not the tensor product of two ample line bundles. 
	\end{enumerate}
	\item 
	\label{propitem:critical case}	
	If $t=r$ and we are in the critical case, then there is an ample divisor $\Theta$ on $S$ such that $\mu(\cF(-\Theta)) = 0$. 
\end{enumerate}
\end{prop}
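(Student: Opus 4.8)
The plan is to read off all four statements from the single numerical object $\mu(\cF)$. For \ref{propitem:slope class} I would start from the identification $\cF=\pi_\ast(\omega_{X/S}\otimes\cL)=\Sym^{a-r}(\cE)\otimes\det(\cE)\otimes\cM$ already recorded in \lref{lem:FNonPEuppperboundCriterion}; this makes sense because each $a_i\geq 1$ forces $a=\sum a_i\geq t\geq r$. Then I would use the universal slope identity $\mu(\Sym^k\cE)=k\,\mu(\cE)$ (immediate from the splitting principle, since $\rc_1(\Sym^k\cE)=\tfrac{k}{r}\binom{k+r-1}{r-1}\rc_1(\cE)$) together with $\mu(\cG\otimes\cN)=\mu(\cG)+\rc_1(\cN)$ for a line bundle $\cN$. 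With $k=a-r$ this gives $\mu(\cF)=(a-r)\mu(\cE)+\rc_1(\det\cE)+\rc_1(\cM)=a\,\mu(\cE)+\mu(\cM)$, which equals $\sum_i(a_i\mu(\cE)+\mu(\cM_i))$ since $a=\sum a_i$ and $\mu(\cM)=\sum\mu(\cM_i)$.

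For the remaining parts I would first reduce everything to a decomposition statement for $\mu(\cF)$. Writing $N=\rank(\cF)=\binom{a-1}{r-1}>0$, the determinant of $\cF(-\Theta)$ has numerical class $N(\mu(\cF)-\Theta)$, so \emph{$\cF(-\Theta)$ has ample determinant} is equivalent to \emph{$\mu(\cF)-\Theta$ is ample}. Next I set $\nu_i:=a_i\mu(\cE)+\mu(\cM_i)$, so that $\mu(\cF)=\sum_{i=1}^t\nu_i$ and $r\nu_i=\rc_1(\det(\cE)^{\otimes a_i}\otimes\cM_i^{\otimes r})$; by the nef-cone description \pref{prop:NEFconePE}, ampleness of $\cL_i=\pi^\ast\cM_i(a_i)$ forces $\det(\cE)^{\otimes a_i}\otimes\cM_i^{\otimes r}$ to be ample, so each $\nu_i$ is an ample $\bQ$-class whose $r$-fold multiple is integral. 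The engine for both \ref{propitem:F larger than ample} and \ref{propitem:F larger than ample at r} is then the observation that if $I\subseteq\{1,\dots,t\}$ is a proper nonempty subset with $m\mid\sum_{i\in I}a_i$, where $m$ is the denominator of $\mu(\cE)$ (\dref{defi:denominator}, recall $m\mid r$), then $\Theta:=\sum_{i\in I}\nu_i$ is an honest integral class (the $\mu(\cM_i)$ are integral and $(\sum_{i\in I}a_i)\mu(\cE)$ is integral), it is ample as a nonempty sum of ample classes, and $\mu(\cF)-\Theta=\sum_{i\notin I}\nu_i$ is ample because $I$ is proper. Thus the task becomes a zero-sum question for the residues $a_i\bmod m$.

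For \ref{propitem:F larger than ample} I would note that $t>r\geq m$, so the sequence $(a_i\bmod m)_{i=1}^t$ has length $>m=D(\bZ/m)$, the Davenport constant; deleting one term leaves a length-$(t-1)\geq m$ sequence, which still carries a nonempty zero-sum subsequence, yielding the required proper nonempty $I$ and hence $\Theta$. For \ref{propitem:F larger than ample at r}, where $t=r$, the same construction succeeds as soon as $(a_i)_{i=1}^r$ admits a proper nonempty zero-sum subsequence modulo $m$. By the extremal structure of zero-sum-free sequences over $\bZ/m$, such a subsequence fails to exist precisely when $m=r$ and all $a_i$ are congruent to a common generator $c$ of $\bZ/r$ (equivalently $\gcd(c,r)=1$): if $m<r$ the deletion argument applied to the length-$r$ sequence already produces a proper zero-sum, while if $m=r$ but the $a_i$ are not a single common unit residue a proper zero-sum again exists. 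These are exactly conditions (i) and (ii) of the critical case.

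It remains to treat the case where (i) and (ii) hold. Then $a=\sum a_i\equiv rc\equiv 0\pmod r$, so $a/r\in\bZ$ and $\mu(\cF)=\rc_1(\det(\cE)^{\otimes a/r}\otimes\cM)$ is an \emph{integral} ample class. For an integral class, the existence of an integral ample $\Theta$ with $\mu(\cF)-\Theta$ ample is literally the statement that $\det(\cE)^{\otimes a/r}\otimes\cM$ is a tensor product of two ample line bundles; so if (iii) fails we again get $\Theta$, while if (iii) holds no $\Theta$ can exist, which is the critical case and completes \ref{propitem:F larger than ample at r}. Finally, for \ref{propitem:critical case} I would simply take $\Theta:=\mu(\cF)$ itself, an integral ample divisor, giving $\mu(\cF(-\Theta))=\mu(\cF)-\Theta=0$. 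The step I expect to be the crux is the combinatorial heart of the last two paragraphs: reformulating ampleness of $\det\cF(-\Theta)$ as a zero-sum problem so as to bypass delicate ample-cone lattice geometry, and then invoking the exact extremal characterization of zero-sum-free sequences over $\bZ/m$ to isolate precisely conditions (i)–(ii), with the genuinely lattice-theoretic indecomposability input (iii) handling the case where $\mu(\cF)$ is already integral.
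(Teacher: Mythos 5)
Your proof is correct, and its skeleton matches the paper's own: the slope computation in (1), the reduction of (2)--(3) to finding a proper nonempty subset $I$ with $\sum_{i\in I}a_i$ divisible by the relevant modulus (which produces an integral ample class $\Theta=\sum_{i\in I}\nu_i$ whose complement $\sum_{i\notin I}\nu_i$ is again ample, using \pref{prop:NEFconePE} exactly as you do), the role of condition (iii) in the residual case where $\mu(\cF)$ is integral, and the one-line proof of (4) are all identical in substance. The genuine divergence is the combinatorial core of (3). The paper stays entirely self-contained: when the $a_i$ are not all congruent mod $r$ (say $a_1\not\equiv a_2$), it runs a pigeonhole on the $r+1$ sums $a_1,\ a_2,\ a_1+a_2,\ a_1+a_2+a_3,\ \ldots,\ a_1+\cdots+a_r$, where any coincidence other than $a_1\equiv a_2$ yields a proper nonempty zero-sum subset; the subcase of a common residue $c$ with $\gcd(c,r)>1$ is handled by a subset of size $r/\gcd(c,r)$; and the subcase of denominator $m<r$ by rerunning the prefix-sum pigeonhole mod $m$. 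You instead invoke the inverse theorem for the Davenport constant of $\bZ/m$ (the extremal structure of zero-sum-free sequences), which the paper never needs. That result is true and citable, but note that its standard form concerns sequences of length $m-1$ with no nonempty zero-sum subsequence, whereas you need ``length $r$ with no \emph{proper} nonempty zero-sum subsequence''; to bridge this, apply the structure theorem to the two windows $\{1,\ldots,r-1\}$ and $\{2,\ldots,r\}$ and compare their common terms (with $r=2$ checked by hand) --- this small step should be spelled out. What your route buys: a uniform modulus $m$ from the outset, and the sharper ``precisely when,'' including the observation (absent from the paper, though implicit in the design of the critical case) that in the critical case no admissible $\Theta$ exists at all, since existence of $\Theta$ is equivalent to $\det(\cE)^{\otimes a/r}\otimes\cM$ splitting as a product of two ample line bundles. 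What the paper's route buys: it requires nothing beyond the pigeonhole principle.
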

\begin{proof}
\eqref{propitem:slope class} If $a \geq r$, then $\cF = \Sym^{a-r}(\cE) \otimes \det(\cE) \otimes \cM$ has slope
\[
\mu(\cF) = (a-r) \mu(\cE) + \mu(\det(\cE)) + \mu(\cM) = a \mu(\cE) + \mu(\cM) = \sum_{i=1}^t \big(a_i \mu(\cE) + \mu(\cM_i)\big).
\]

Note that, if $t \geq r$, then we have $a \geq r$ since $a_i > 0$ for all $i= 1, \ldots, t$.

\eqref{propitem:F larger than ample}	If $t > r$, we claim that there is a partition $I \cup J = \{1, \ldots, t\}$ with $I$ and $J$ nonempty such that $a_I = \sum_{i \in I} a_i$ is divisible by $r$.  We consider the $r$-many sums $b_\nu = \sum_{i=1}^\nu a_i$ for $1 \leq \nu \leq r$. If any of these is divisible by $r$, then the claim holds. Otherwise, the $b_\nu$ for $\nu=1, \ldots, r$ take only at most $r-1$ values modulo $r$. By the pigeon hole principle there are $1 \leq k < \ell \leq r$ with $b_k \equiv b_\ell$ modulo $r$. Then $I = \{k+1, \ldots, \ell\}$ has $a_I = b_\ell - b_k \equiv 0 \pmod r$. This proves the claim. 

Now we set $\Theta =  \det(\cE)^{\otimes a_I/r} \otimes \bigotimes_{i \in I} \cM_i$. This is a line bundle on $S$ with an ample slope class by \pref{prop:NEFconePE}. The slope of $\cF(-\Theta)$ computes as 
\[
\mu(\cF(-\Theta)) = \sum_{i \in J} a_j \mu(\cE) + \mu(\cM_j),
\]
which is also ample by \pref{prop:NEFconePE}. 

\eqref{propitem:F larger than ample at r}	We need to analyse the combinatorial argument. As long as the partition still exists, the conclusion of \eqref{propitem:F larger than ample} stands. If $\mu(\cE)$ has denominator less than $r$, then we can repeat the combinatorial argument with $r$ replaced by the denominator of $\mu(\cE)$ and  the conclusion of \eqref{propitem:F larger than ample} holds. Secondly, if not all $a_i$ are pairwise congruent modulo $r$, then we can rearrange without loss of generality and assume $a_1 \not\equiv a_2 \pmod r$. Then we consider the partial sums
\[
a_1, \ a_2, \ a_1 + a_2, \ a_1 + a_2 + a_3, \ \ldots, \ a_1 + \ldots + a_r.
\]
These provide $r+1$ many values modulo $r$, hence by the pigeon hole principle at least one residue occurs twice. This pair is not $a_1$ and $a_2$ by assumption. In all other cases we can again consider the difference and find the desired nontrivial partition.  

It follows that  the conclusion of \eqref{propitem:F larger than ample} stands unless all $a_i$ are pairwise congruent modulo $r$ and coprime to $r$. But then $r \mid a$ and $\Theta = \det(\cE)^{\otimes a/r} \otimes \cM$ is in fact an ample line bundle on $S$ with $\mu(\cF) = \mu(\Theta)$. If $\Theta$ is a sum  of two ample divisors $\Theta = \Theta_1 + \Theta_2$, then $\cF(-\Theta_1)$ has slope class that of $\Theta_2$ and the proof of \eqref{propitem:F larger than ample at r}	is complete. 

\eqref{propitem:critical case}	
We already saw that in the critical case $\mu(\cF) = \mu(\Theta)$ for an ample divisor $\Theta$. That shows $\mu(\cF(-\Theta) = 0$. 
\end{proof}

\subsection{Projective bundles over curves}
\label{sec:P bundle on curves}

The results of this section are contained in \cite[\S3]{ChenKuronyaMustopaStix2023:ConvexFujitaSurfaces} with the exception of \tref{thm:FNofPE Yusufs result} \eqref{propitem:FNonPEoverC2} \ref{propitem:FNonPEoverC2ii} 
 which is new.
 
Recall that a vector bundle $\cE$ on a smooth projective curve $C$ admits a unique Harder-Narasimhan filtration
\[
0 = \cE_0 \subseteq \cE_1 \subseteq \cE_2 \subseteq \ldots \subseteq \cE_{t-1} \subseteq \cE_t = \cE,
\]
by subbundles $\cE_i$ such that $\cE_i/\cE_{i-1}$ is semistable of slope $\mu_i$ with 
\[
\mu^+(\cE) \coloneq \mu_1 > \mu_2 > \ldots > \mu_{t-1} > \mu_t \eqcolon \mu^-(\cE).
\]
Since the slope of an extension is always in the interval of the slopes of its constituents, we find
\[
\mu^-(\cE) \leq \mu(\cE) \leq \mu^+(\cE)
\]
with equality if and only if $\cE$ is semistable. We call $\mu^+(\cE)$ (resp.\ $\mu^-(\cE)$) the \textbf{maximal} (resp.\ \textbf{minimal}) \textbf{slope} of $\cE$. Butler deduced the following proposition from  \cite[Theorem 3.1]{Miyaoka1987:ChernClassesKodaira}.

\begin{prop}[{\cite[Lemma 5.4]{Butler1994:NormalGenerationVector}}]
\label{prop:ample cone on PE over curve}
Let $\pi:  \bP(\cE) \to C$ be the projective space bundle of a vector bundle $\cE$ of rank $r \geq 2$ on a smooth projective  curve $C$. A line bundle $\cL = \pi^\ast \cM (a)$ is ample if and only if $a > 0$ and 
\[
\deg(\cM) + a \mu^-(\cE) > 0.
\]
\end{prop}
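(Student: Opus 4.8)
The plan is to prove the two implications directly, reducing in each case to the semistable situation that is already understood. As orientation, note that $\NS(\bP(\cE))_\bR$ is two-dimensional, spanned by $\xi := \rc_1(\dO(1))$ and the class $f$ of a fibre, and that the class of $\cL = \pi^\ast\cM(a)$ is $a\xi + \deg(\cM)f$; the assertion says that the ample cone is the open cone $\{a>0,\ \deg(\cM)+a\mu^-(\cE)>0\}$. I would obtain the two bounding inequalities from two different restrictions of $\cL$.

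For the implication ``ample $\Rightarrow$ inequalities'', I would restrict along the minimal-slope quotient. Writing $\cQ := \cE/\cE_{t-1}$ for the last quotient of the Harder-Narasimhan filtration, $\cQ$ is semistable with $\mu(\cQ) = \mu^-(\cE)$, and the surjection $\cE\twoheadrightarrow\cQ$ yields a closed immersion $j\colon\bP(\cQ)\hookrightarrow\bP(\cE)$ with $j^\ast\dO_{\bP(\cE)}(1) = \dO_{\bP(\cQ)}(1)$, so that $j^\ast\cL = \pi_{\cQ}^\ast\cM(a)$. If $\cL$ is ample then so is its pullback $j^\ast\cL$ along the closed immersion $j$. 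Since a semistable bundle on a curve is curve semistable, \pref{prop:NEFconePE} applies to $\bP(\cQ)$ and gives $a>0$ together with $\deg(\cM)+a\mu(\cQ) = \deg(\cM)+a\mu^-(\cE)>0$, the two desired inequalities. (Restricting instead to a single fibre $\bP^{r-1}$, on which $\cL$ is $\dO(a)$, already yields $a>0$.)

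For the converse, assume $a>0$ and $\deg(\cM)+a\mu^-(\cE)>0$. I would factor $\cL$ through the relative $a$-th Veronese embedding $v\colon\bP(\cE)\hookrightarrow\bP(\Sym^a\cE)$ over $C$, a closed immersion with $v^\ast\dO_{\bP(\Sym^a\cE)}(1)=\dO_{\bP(\cE)}(a)$; tensoring with $\pi^\ast\cM$ gives $\cL = v^\ast\dO_{\bP(\Sym^a\cE\otimes\cM)}(1)$. Because the pullback of an ample line bundle along the finite map $v$ is ample, it suffices that $\Sym^a\cE\otimes\cM$ be an ample vector bundle. On a curve this is equivalent, by Hartshorne's criterion, to $\mu^-(\Sym^a\cE\otimes\cM)>0$, and with $\mu^-(\Sym^a\cE)=a\mu^-(\cE)$ this minimal slope equals $a\mu^-(\cE)+\deg(\cM)>0$, as required.

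The step carrying the real weight is the identity $\mu^-(\Sym^a\cE)=a\mu^-(\cE)$. One inequality comes from $\Sym^a\cE$ being a direct summand of $\cE^{\otimes a}$ together with $\mu^-(\cE^{\otimes a})=a\mu^-(\cE)$; the other from applying $\Sym^a$ to $\cE\twoheadrightarrow\cQ$ and using that $\Sym^a\cQ$ is semistable of slope $a\mu^-(\cE)$. Both invoke the nontrivial theorem that tensor and symmetric powers of semistable bundles stay semistable in characteristic zero (via the Narasimhan--Seshadri correspondence, or Ramanan--Ramanathan); this is essentially the content of Miyaoka's \cite[Theorem 3.1]{Miyaoka1987:ChernClassesKodaira} from which the statement was originally deduced, so one may alternatively quote Miyaoka's nef-cone description directly and reduce the whole proof to reading off the interior of a two-dimensional cone.
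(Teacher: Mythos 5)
Your proposal is correct, but note the point of comparison: the paper gives no proof of this proposition at all — it is imported from \cite[Lemma 5.4]{Butler1994:NormalGenerationVector}, with the remark that Butler deduced it from \cite[Theorem 3.1]{Miyaoka1987:ChernClassesKodaira}. So what you have written is a self-contained reconstruction of that deduction rather than a parallel to an argument in the text, and it holds up. For necessity, restricting to $\bP(\cQ)$ for the minimal-slope Harder--Narasimhan quotient $\cQ$ and invoking \pref{prop:NEFconePE} is legitimate (a semistable bundle on a curve is curve semistable), and it is consistent with the Grothendieck quotient convention the paper uses (visible in $\pi_\ast \dO(k) = \Sym^k \cE$ in \lref{lem:FNonPEuppperboundCriterion}), under which a surjection $\cE \surj \cQ$ induces a closed immersion $\bP(\cQ) \inj \bP(\cE)$ compatible with the tautological bundles. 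For sufficiency, the relative Veronese plus Hartshorne's criterion (ample on a curve $\Leftrightarrow$ $\mu^- > 0$) works, and your two-sided computation of $\mu^-(\Sym^a\cE) = a\mu^-(\cE)$ — via $\Sym^a\cE$ being a direct summand of $\cE^{\otimes a}$ on one side and the quotient $\Sym^a\cE \surj \Sym^a\cQ$ on the other — is complete, resting on semistability of tensor and symmetric powers in characteristic zero, the same input underlying Miyaoka's theorem and hence in accordance with the paper's attribution. The only caveat concerns your closing remark: Miyaoka's Theorem 3.1 (and likewise \pref{prop:NEFconePE}) describes the nef cone only for (curve) \emph{semistable} bundles, whereas here $\cE$ is arbitrary; for non-semistable $\cE$ the description of the nef cone as $\left\{ a \geq 0,\ \deg(\cM) + a\mu^-(\cE) \geq 0 \right\}$ is precisely what is being proved, so it cannot simply be ``read off'' from Miyaoka without an argument like the one you gave (or without quoting instead \cite[Thm.~6.4.15]{Lazarsfeld2004:PositivityAlgebraicGeometry}, which is the same statement in $\bQ$-twisted form), after which Kleiman's theorem identifies the ample cone with the interior.
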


\begin{thm}
\label{thm:FNonPEuppperbound}
Let $\cE$ be a vector bundle of rank $r \geq 2$ on a smooth projective curve $C$. 
Then we have
\[
r \leq \conFN(\bP(\cE)) \leq r+1.
\]
If $\cE$ is not semistable then $\conFN(\bP(\cE)) = r$.
\end{thm}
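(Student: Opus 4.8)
The lower bound $\conFN(\bP(\cE)) \geq r$ is \pref{prop:FNonPElowerbound}, so the content lies entirely in the upper bounds, and the plan is to reduce them to a single numerical inequality on the base curve via \lref{lem:FNonPEuppperboundCriterion}. I would fix ample line bundles $\cL_i = \pi^\ast\cM_i(a_i)$ for $i = 1,\dots,t$ with $t \geq r$ and keep the notation $\cM = \bigotimes_i\cM_i$, $a = \sum_i a_i$, and $\cF = \Sym^{a-r}(\cE)\otimes\det(\cE)\otimes\cM$ of that lemma. By \lref{lem:FNonPEuppperboundCriterion} it is enough to show that $\omega_C\otimes\cF$ is globally generated on $C$, and for this I will invoke the standard criterion, immediate from Serre duality, that a vector bundle $\cG$ on a curve of genus $g$ with $\mu^-(\cG) > 2g-1$ is globally generated. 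Since $\mu^-(\omega_C\otimes\cF) = (2g-2)+\mu^-(\cF)$, everything comes down to proving $\mu^-(\cF) > 1$.

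To bound $\mu^-(\cF)$ from below I would first note that tensoring by a line bundle shifts the minimal slope by its degree, and that $\mu^-(\Sym^{a-r}\cE) \geq (a-r)\mu^-(\cE)$: indeed $\Sym^{a-r}(\cE)$ is a quotient of $\cE^{\otimes(a-r)}$ (minimal slope does not decrease under passing to quotients), whose minimal slope is $(a-r)\mu^-(\cE)$ by semistability of tensor products in characteristic zero. With $\deg\cE = r\mu(\cE)$ this gives
\[
\mu^-(\cF) \;\geq\; (a-r)\mu^-(\cE) + \deg\cE + \deg\cM \;=\; \big(\deg\cM + a\,\mu^-(\cE)\big) + r\big(\mu(\cE)-\mu^-(\cE)\big).
\]
Here the second term is $\geq 0$, and is strictly positive exactly when $\cE$ is not semistable. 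For the first term I would use ampleness together with an integrality trick: by \pref{prop:ample cone on PE over curve} each $\cL_i$ being ample forces $a_i\geq 1$ and $\deg\cM_i + a_i\mu^-(\cE) > 0$; writing $\mu^-(\cE) = e/\rho$ with $\rho$ the rank and $e$ the degree of the minimal-slope Harder--Narasimhan quotient of $\cE$, the integer $\rho\deg\cM_i + a_i e$ is then positive, so $\deg\cM_i + a_i\mu^-(\cE) \geq 1/\rho$. Summing over $i$ yields $\deg\cM + a\,\mu^-(\cE) \geq t/\rho$.

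The theorem then follows by comparing $\rho$ with $r$, using that $\rho \leq r$ with equality if and only if $\cE$ is semistable. For the general upper bound $\conFN(\bP(\cE)) \leq r+1$ one takes $t \geq r+1$, so that $\deg\cM + a\,\mu^-(\cE) \geq (r+1)/\rho \geq (r+1)/r > 1$ and hence $\mu^-(\cF) > 1$. If instead $\cE$ is not semistable, then $\rho < r$, so already $t = r$ gives $\deg\cM + a\,\mu^-(\cE) \geq r/\rho > 1$, whence $\mu^-(\cF) > 1$; this shows $\conFN(\bP(\cE)) \leq r$, which together with the lower bound forces equality. The step I expect to require the most care is the passage from the open ampleness conditions ``$>0$'' to the uniform quantitative bound ``$\geq 1/\rho$'': it hinges on the integrality of $\rho\deg\cM_i + a_i e$ and on correctly identifying $\rho$, the rank of the minimal-slope Harder--Narasimhan quotient, rather than $r$, as the relevant denominator. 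It is precisely the drop $\rho < r$ in the unstable case that lets the threshold cross $1$ already at $t=r$.
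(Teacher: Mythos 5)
Your proposal is correct and takes essentially the same route as the paper: the lower bound from \pref{prop:FNonPElowerbound}, reduction via \lref{lem:FNonPEuppperboundCriterion} to global generation of $\omega_C\otimes\cF$, the bound $\mu^-(\cF)\geq r\big(\mu(\cE)-\mu^-(\cE)\big)+t/\rho$ obtained from \pref{prop:ample cone on PE over curve} together with the same integrality argument (the paper writes $r^-$ for your $\rho$), and the Serre-duality slope criterion, which is exactly the paper's \lref{lem:SlopeCriterionGloballyGeneratedVB}. The only cosmetic difference is that you bound $\mu^-(\Sym^{a-r}\cE)$ from below via the surjection $\cE^{\otimes(a-r)}\twoheadrightarrow\Sym^{a-r}(\cE)$, whereas the paper computes the minimal slope exactly by identifying the minimal Harder--Narasimhan quotient with $\Sym^{a-r}(\cE^-)\otimes\det(\cE)\otimes\cM$.
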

\begin{proof}
The lower bound was established in \pref{prop:FNonPElowerbound}. The upper bound will be deduced from \lref{lem:FNonPEuppperboundCriterion}, we thus keep the notation of this lemma and its proof. 

Let $\cE^-$ be the quotient of $\cE$ which is the part of its Harder-Narasimhan filtration with minimal slope. Since symmetric powers and tensor products of semistable vector bundles are again semistable, see  \cite[Corollary 3.7 and 3.10]{Miyaoka1987:ChernClassesKodaira}, we find for $\cF = \Sym^{a-r}(\cE) \otimes \det(\cE) \otimes \cM$ that 
\[
\cF^-  = \Sym^{a-r}(\cE^-) \otimes \det(\cE) \otimes \cM.
\]
Let $r^-$ be the rank of $\cE^-$. By \pref{prop:ample cone on PE over curve}, we compute the minimal slope of $\cF$ as
\begin{align*}
\mu^-(\cF) & = \mu(\cF^-)  = \mu(\Sym^{a-r}(\cE^-)) + \mu(\det(\cE)) + \mu(\cM)  \\
& = (a-r) \mu^-(\cE) + r \mu(\cE) + \mu(\cM)  \\ 
& =  r  \big( \mu(\cE) - \mu^-(\cE)\big) + \sum_{i=1}^t \big(\deg(\cM_i) + a_i \mu^-(\cE) \big) \geq \frac{t}{r^-}.
\end{align*}
The latter holds, because $\mu(\cE) \geq \mu^-(\cE)$ and since $\deg(\cM_i) + a_i\mu^-(\cE) \geq \frac{1}{r^-}$ by \pref{prop:ample cone on PE over curve}. 

If $\cE$ is not semistable or $t \geq r+1$, we have $t > r^-$ and so  $\mu^-(\cF) > 1$. By \lref{lem:SlopeCriterionGloballyGeneratedVB} below, the adjoint bundle $\omega_C \otimes \cF$ is globally generated, and the claim on the convex Fujita number of $\bP(\cE)$  is a consequence of \lref{lem:FNonPEuppperboundCriterion}. 
\end{proof}

The following lemma is well known, see for example \cite[Lemma 1.12(3)]{Butler1994:NormalGenerationVector}.

\begin{lem}
\label{lem:SlopeCriterionGloballyGeneratedVB}
Let $C$ be a smooth projective curve, and let $\cF$ be a vector bundle on $C$ of minimal slope $\mu^-(\cF) > 1$. Then $\omega_C \otimes \cF$ is globally generated. 
\end{lem}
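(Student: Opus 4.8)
The plan is to reduce global generation of $\cG \coloneq \omega_C \otimes \cF$ to a single cohomology vanishing, and then to verify that vanishing via Serre duality together with the elementary slope obstruction to the existence of sections. First, for a fixed point $p \in C$ I would use the standard short exact sequence
\[
0 \to \cG(-p) \to \cG \to \cG \otimes k(p) \to 0.
\]
Passing to cohomology, the evaluation map $\rH^0(C,\cG) \to \cG \otimes k(p)$ is surjective as soon as $\rH^1(C,\cG(-p)) = 0$; since $\cG$ is globally generated precisely when this surjectivity holds for every $p \in C$, it suffices to prove $\rH^1(C,\cG(-p)) = 0$ for all $p$.

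Next, by Serre duality $\rH^1(C,\cG(-p)) \cong \rH^0\big(C,\omega_C \otimes \cG(-p)^\vee\big)^\vee$, so the task becomes showing that $\cW \coloneq \omega_C \otimes \cG(-p)^\vee$ has no nonzero global sections. Here I would invoke the elementary fact that a nonzero section of a vector bundle $\cW$ yields a nonzero map $\cO_C \to \cW$ whose saturation is a line subbundle of nonnegative degree, forcing $\mu^+(\cW) \geq 0$; contrapositively, $\mu^+(\cW) < 0$ implies $\rH^0(C,\cW) = 0$.

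It then remains to compute $\mu^+(\cW)$. Tensoring by a line bundle shifts every Harder--Narasimhan slope by its degree, and dualizing reverses and negates the filtration, so that $\mu^+(\cV^\vee) = -\mu^-(\cV)$ for any bundle $\cV$. Writing $g$ for the genus and using $\deg\omega_C = 2g-2$, this gives
\[
\mu^+(\cW) = (2g-2) + \mu^+\big(\cG(-p)^\vee\big) = (2g-2) - \mu^-\big(\cG(-p)\big).
\]
Since $\mu^-(\cG) = (2g-2) + \mu^-(\cF)$ and twisting by $\cO_C(-p)$ lowers the minimal slope by $1$, we have $\mu^-(\cG(-p)) = (2g-3) + \mu^-(\cF)$, whence $\mu^+(\cW) = 1 - \mu^-(\cF)$. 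The hypothesis $\mu^-(\cF) > 1$ therefore yields $\mu^+(\cW) < 0$, so $\rH^0(C,\cW) = 0$ and thus $\rH^1(C,\cG(-p)) = 0$. As $p$ was arbitrary, $\omega_C \otimes \cF$ is globally generated.

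The computations are routine; the only point requiring care is the bookkeeping of how the maximal and minimal Harder--Narasimhan slopes transform under twisting and dualizing, and matching the boundary of the inequality exactly so that the threshold $\mu^-(\cF) > 1$ (rather than some shifted constant) is what comes out. The single structural input is the slope obstruction $\mu^+(\cW) < 0 \Rightarrow \rH^0(C,\cW) = 0$ used in the second step, but this is elementary and I expect no genuine difficulty there.
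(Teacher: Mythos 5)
Your proof is correct and follows essentially the same route as the paper's: reduction of global generation at each point $p$ to the vanishing of $\rH^1(C,\omega_C\otimes\cF(-p))$, Serre duality, and the slope obstruction $\mu^+ < 0 \Rightarrow \rH^0 = 0$ applied to the dual twist (your $\cW$ simplifies to the paper's $\cF^\vee(p)$, and your slope bookkeeping yields the same threshold $1-\mu^-(\cF)<0$). No gaps; the argument matches the paper's proof in substance.
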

\begin{proof}
Let $P$ be a point in $C$. Global sections of $\omega_C \otimes \cF$ generate in $P$ if $\rH^1(C,\omega_C \otimes \cF(-P)) = 0$. By Serre duality, it suffices to show $\rH^0(C,\cF^\vee(P)) = 0$. The maximal slope of these dual coefficients is
\[
\mu^+(\cF^\vee(P)) = 1 - \mu^-(\cF) < 0.
\]
Any nontrivial map $\dO_C \to \cF^\vee(P)$ therefore violates the semistability of the filtration quotients of the Harder-Narasimhan filtration of $\cF^\vee(P)$. 
\end{proof}

Under suitable hypothesis we are able to decide the convex Fujita number 
among the two values that \tref{thm:FNonPEuppperbound} allows. 

\begin{thm}
\label{thm:FNofPE Yusufs result}
Let $\cE$ be a semistable vector bundle of rank $r \geq 2$ and degree $d$ on a smooth projective curve $C$. 
\begin{enumerate}[align=left,labelindent=0pt,leftmargin=*]
\item 
\label{propitem:FNonPEoverC1}
If $(r,d) \not= 1$, then $\conFN(\bP(\cE)) = r$.
\item 
\label{propitem:FNonPEoverC2}
If $(r,d) = 1$, then the following holds. 
\begin{enumerate}[label=(\roman*),align=left,labelindent=0pt,leftmargin=*]
\item 
\label{propitem:FNonPEoverC2i} 
If $d \not\equiv 1 \pmod r$ and for all $k > 0$ the bundle $\Sym^{rk}(\cE)$ has no direct summand that is a line bundle,  then $\conFN(\bP(\cE)) = r$.
\item 
\label{propitem:FNonPEoverC2ii} 
If $d \equiv 1 \pmod r$, then $\conFN(\bP(\cE)) = r+1$.
\end{enumerate}
\end{enumerate}
\end{thm}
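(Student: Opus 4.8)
The plan is to reduce every assertion to the borderline value $t=r$ and then feed the positivity analysis of
\[
\cF \;=\; \pi_\ast(\omega_{X/C}\otimes\cL)\;=\;\Sym^{a-r}(\cE)\otimes\det(\cE)\otimes\cM
\]
from \pref{prop:positivity of F} into the global generation criterion \lref{lem:FNonPEuppperboundCriterion}. Throughout, $\cF$ is semistable, since $\cE$ is and symmetric powers and line bundle twists preserve semistability on a curve, so $\mu^-(\cF)=\mu(\cF)$. By \pref{prop:FNonPElowerbound} and \tref{thm:FNonPEuppperbound} we already have $r\le\conFN(\bP(\cE))\le r+1$, and the computation in the proof of \tref{thm:FNonPEuppperbound} gives $\mu^-(\cF)\ge t/r>1$ for all $t\ge r+1$, whence \lref{lem:SlopeCriterionGloballyGeneratedVB} and \lref{lem:FNonPEuppperboundCriterion} yield global generation of $\omega_X\otimes\cL$ already for $t\ge r+1$. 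Thus only $t=r$ remains in each part. For \ref{propitem:FNonPEoverC1}, if $(r,d)\ne 1$ then $\mu(\cE)=d/r$ has denominator strictly smaller than $r$, so the critical case of \pref{prop:positivity of F}\eqref{propitem:F larger than ample at r} cannot occur; hence some ample $\Theta$ has $\cF(-\Theta)$ of ample determinant, giving $\mu^-(\cF)=\mu(\cF)>\deg\Theta\ge 1$, and \lref{lem:SlopeCriterionGloballyGeneratedVB} finishes with $\conFN(\bP(\cE))=r$.

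For the counterexample proving \ref{propitem:FNonPEoverC2ii}, I take all $a_i=1$, so that $a=r$, $\Sym^{a-r}(\cE)=\cO_C$, and $\omega_X\otimes\cL\cong\pi^\ast N$ is a pullback with $N=\omega_C\otimes\det(\cE)\otimes\cM$; consequently $\omega_X\otimes\cL$ is globally generated if and only if $N$ is. Writing $d=qr+1$, ampleness of $\cL_i=\pi^\ast\cM_i(1)$ forces $\deg\cM_i\ge -q$ by \pref{prop:ample cone on PE over curve}, and I choose the $\cM_i$ of degree exactly $-q$ with $\bigotimes_i\cM_i\cong\det(\cE)^{-1}(p)$ for a chosen point $p\in C$. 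Then $N\cong\omega_C(p)$, which has degree $2g-1$ and base point $p$ because $h^0(\omega_C(p))=h^0(\omega_C)$; hence $\omega_X\otimes\cL$ is not globally generated for this choice with $t=r$, so $\conFN(\bP(\cE))=r+1$.

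For \ref{propitem:FNonPEoverC2i} I split $t=r$ into three subcases. If all $a_i=1$, then as above $N=\omega_C\otimes\det(\cE)\otimes\cM$ has degree at least $2g-2+s$, where $0<s<r$ with $s\equiv d\pmod r$; since $d\not\equiv 1$ forces $s\ge 2$, we get $\deg N\ge 2g$ and global generation. If some $a_i\ge 2$ but we are not in the critical case, then as in \ref{propitem:FNonPEoverC1} we have $\mu^-(\cF)>1$ and conclude. The essential subcase is $t=r$, some $a_i\ge 2$, and critical: then $r\mid a$ and, as some $a_i\ge 2$, in fact $a\ge 2r$, so $k:=(a-r)/r\ge 1$; moreover condition (iii) on a curve says $\deg\Theta=1$ (a line bundle of degree $\ge 2$ always splits as a tensor product of two ample ones), so \pref{prop:positivity of F}\eqref{propitem:critical case} gives $\mu(\cF)=1$ and $\cF=\Sym^{rk}(\cE)\otimes\cN$ is semistable of slope $1$. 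Here the slope criterion is exactly borderline, so I verify global generation of $\omega_C\otimes\cF$ by hand: by Serre duality it fails at $p$ precisely when $h^0(\cF^\vee(p))>0$, i.e. when the slope-$0$ semistable bundle $\Sym^{rk}(\cE^\vee)\otimes\cN^{-1}(p)$ has a nonzero section, equivalently when $\Sym^{rk}(\cE^\vee)$ carries a sub-line-bundle of maximal slope. But $(r,d)=1$ makes $\cE$ stable, hence hermitian projectively flat by \tref{thm:projectively flat} and \cref{cor:JordanHoelder by stable hermitian projectively flat}; the induced connection on $\Sym^{rk}(\cE^\vee)$ is again projectively flat, so this bundle is polystable, and in a polystable bundle a maximal-slope sub-line-bundle is a direct summand. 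Dualizing, this would give $\Sym^{rk}(\cE)$ a line-bundle direct summand, contradicting the hypothesis. Hence $\omega_X\otimes\cL$ is globally generated and $\conFN(\bP(\cE))=r$.

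The main obstacle is exactly this last critical subcase: the slope bound is precisely $\mu^-(\cF)=1$, so \lref{lem:SlopeCriterionGloballyGeneratedVB} is unavailable and global generation must be extracted through the Serre-duality reformulation, while the hypothesis on the symmetric powers $\Sym^{rk}(\cE)$ becomes usable only after upgrading semistability of $\Sym^{rk}(\cE)$ to polystability via the projectively flat structure furnished by the stability of $\cE$.
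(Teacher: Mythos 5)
Your proof is correct and follows essentially the same route as the paper's: reduction to the borderline case $t=r$, identification of the critical case $\mu(\cF)=1$ (equivalently $r\deg(\cM_i)+a_id=1$ for all $i$, impossible when $(r,d)\neq 1$), the Serre-duality plus polystability argument ruling out sections of the slope-zero bundle $\cF^\vee(P)$ via the no-line-bundle-summand hypothesis for \ref{propitem:FNonPEoverC2i}, and the same explicit counterexample (all $a_i=1$ with $\det(\cE)\otimes\cM\cong\dO_C(P)$, giving $\omega_C(P)$ with base point $P$) for \ref{propitem:FNonPEoverC2ii}. The only cosmetic differences are that you obtain polystability of $\Sym^{rk}(\cE^\vee)$ from the hermitian projectively flat structure of the stable bundle $\cE$ (via \tref{thm:projectively flat} and \cref{cor:JordanHoelder by stable hermitian projectively flat}) where the paper cites Ramanan--Ramanathan (or \cite{HuybrechtsLehn2010:GeometryModuliSpaces}, 3.2.11), and that you organize case \ref{propitem:FNonPEoverC2i} into three subcases with an elementary degree bound for the all-$a_i=1$ subcase, where the paper notes directly that criticality together with $d\not\equiv 1\pmod r$ forces every $a_i\geq 2$.
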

\begin{proof}
We keep the notation of \tref{thm:FNonPEuppperbound}. The proof will recover arguments from \pref{prop:positivity of F} in a more concrete situation. 

We need to discuss the case $t = r$ for a semistable $\cE$ and decide whether there are $\cM_i$ and $a_i$  such that $\omega_C \otimes \cF = \pi_\ast(\omega_X \otimes \cL)$ is not globally generated. Note that $\cF$ is now also semistable and so $\mu(\cF) = \mu^-(\cF)$. The proof of \tref{thm:FNonPEuppperbound} shows that the only critical case is when $\mu(\cF) = 1$, and that happens exactly when 
\begin{equation}
\label{eq:criticalcasePEonCurve}
r \deg(\cM_i) + a_i d = 1, \text{ for all $i = 1, \ldots, r$},
\end{equation}
This is only possible if $r$ and $d$ are coprime. This shows assertion \eqref{propitem:FNonPEoverC1}.

\smallskip

We now assume $r$ and $d$ are coprime. Then, in particular, $\cE$ is stable. Moreover, we assume that $\cM_i$ and $a_i \geq 1$ are such that the equations \eqref{eq:criticalcasePEonCurve} hold. It follows that all the $a_i$ are congruent to each other modulo $r$. Thus $a-r$ is divisible by $r$.

For assertion \ref{propitem:FNonPEoverC2i} we now also assume that $d \not\equiv 1 \pmod r$. Then $a_i \geq 2$ and $a-r \geq r$. Recall from the proof of \tref{thm:FNonPEuppperbound} that it suffices to show that $\omega_C \otimes \cF$ for 
\[
\cF = \Sym^{a-r}(\cE) \otimes \det(\cE) \otimes \cM
\]  
is globally generated. The proof of \lref{lem:SlopeCriterionGloballyGeneratedVB}  shows that it suffices that for all points $P \in C$ the sheaf 
\[
\cF^\vee(P) \simeq \Sym^{a-r}(\cE^\vee) \otimes \det(\cE)^{-1} \otimes \cM^{-1}
\]
has no global sections.  By  \cite[3.2.11]{HuybrechtsLehn2010:GeometryModuliSpaces}, or more precisely \cite{RamananRamanathan1984:RemarksInstabilityFlag},
the bundle $\Sym^{a-r}(\cE^\vee)$ is polystable. Hence $\cF^\vee(P)$ is also polystable.
By assumption, none of the direct summands is a line bundle (note that $r$ divides $a-r$). By the slope computation from above, the bundle  $ 
\cF^\vee(P)$ is a polystable vector bundle of slope $0$, and so any global section would yield a direct summand that is a line bundle (of the form $\dO_C$), a contradiction. This shows assertion \ref{propitem:FNonPEoverC2i}.

We now prove assertion \ref{propitem:FNonPEoverC2ii}. By assumption $d = 1- kr$ for some $k \in \bZ$. Let $\cM_i$ be line bundles on $C$ of degree $k$, and let $a_i$ be equal to $1$ for all $i = 1, \ldots, r$. Then \eqref{eq:criticalcasePEonCurve} holds, and in particular $\cL_i = \pi^\ast \cM_i(1)$ is ample. More precisely we have
\[
\omega_X \otimes \cL = \dO(-r) \otimes \pi^\ast(\omega_C \otimes \det(\cE)) \otimes \pi^\ast \cM (r) = \pi^\ast(\omega_C \otimes \det(\cE) \otimes \cM).
\]
It follows that $\omega_X \otimes \cL$ is globally generated if and only if $\omega_C \otimes \cF = \omega_C \otimes \det(\cE) \otimes \cM$ is globally generated on $C$. Since $\mu(\cF) = 1$, by an appropriate choice of $\cM_i$ we may find $\cF = \dO_C (P)$ for a point $P \in C$. In this case $\omega_C \otimes \cF$ has $P$ as a base point, and this concludes the proof of assertion \ref{propitem:FNonPEoverC2ii}.
\end{proof}

\begin{rmk}
Semistable vector bundles of rank $r$ and degree $d$ on a curve $C$ of genus $\geq 2$ exist by 
\cite[Lemma 4.3]{NarasimhanRamanan1969:ModuliVectorBundles}. When $d$ and $n$ are coprime, then the semistable vector bundle $\cE$ is in fact stable. If $\cE$ is generic, then $\Sym^k(\cE)$ is again stable for all $k \geq 0$ by a result of Seshadri, 
see Hartshorne \cite[Theorem 10.5]{Hartshorne1970:AmpleSubvarietiesAlgebraic}. So vector bundles satisfying the respective assumptions in \tref{thm:FNofPE Yusufs result} exist in abundance. 

Balaji and Koll\'ar define the notion of a holonomy group for a stable vector bundle $\cE$ on an arbitrary smooth projective variety $S$. This is a reductive subgroup of the automorphism group $\GL(\cE(x))$ of a fibre $\cE(x) := \cE \otimes \kappa(x)$ which is minimal to contain all images of Narasimhan-Seshadri representations  associated to $\cE|_C$ for curves $x \in C \subseteq S$, as long as $\cE|_C$ is still stable.  In \cite[Corollary 6]{BalajiKollar2008:HolonomyGroupsStable} they prove that if the commutator subgroup of the holonomy group is either $\SL(\cE(x))$ or $\Sp(\cE(x))$, then $\Sym^k(\cE)$ is still stable for all $k \geq 0$. So with this assumption on holonomy of $\cE$  the assumptions of the case \ref{propitem:FNonPEoverC2i}  in \tref{thm:FNofPE Yusufs result}  holds provided $d \not\equiv 1 \pmod r$. 
\end{rmk}

\begin{rmk}
If $C$ has genus $0$, then the only projective space bundle of a semistable vector bundle agrees with $\bP^{r-1} \times \bP^1$.  Here by \cite{ChenKuronyaMustopaStix2023:ConvexFujitaFundgp}*{Cor.~2.8} we have (for $r \geq 2$)
\[
\conFN(\bP^{r-1} \times \bP^1) = \max\{\conFN(\bP^{r-1}),\conFN(\bP^1)\} = \max\{r,2\} = r.
\]
\end{rmk}

\subsection{Projective bundles of semihomogeneous bundles}
\label{sec:sh bundle on AV}

In this section the base variety will be an abelian variety $A$. Before stating our next result, we need a lemma.

\begin{lem}
\label{lem:semihom-gg}
	Let $\cF$ be a semihomogeneous vector bundle on an abelian variety $A$. If there is an ample divisor $\Theta$ on $A$ such that $\cF(-\Theta)$ has ample determinant, then $\cF$ is globally generated.
\end{lem}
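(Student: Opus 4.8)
The plan is to twist so that the hypothesis becomes a cohomological positivity statement amenable to the regularity machinery on abelian varieties. Set $\cG \coloneq \cF \otimes \cO_A(-\Theta)$, so that $\cF = \cG \otimes \cO_A(\Theta)$. Tensoring a semihomogeneous bundle by an arbitrary line bundle $N$ keeps it semihomogeneous, since $t_x^\ast(\cG \otimes N) = (t_x^\ast \cG) \otimes (t_x^\ast N)$ and $t_x^\ast N \cong N \otimes \beta$ with $\beta \in \Pic^0(A)$ by the theorem of the square; hence $\cG$ is again semihomogeneous, and by hypothesis $\det(\cG)$ is ample. Thus it suffices to prove the following: a semihomogeneous bundle $\cG$ with ample determinant, tensored with an ample line bundle $\cO_A(\Theta)$, is globally generated.

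First I would establish that such a $\cG$ satisfies $\rH^i(A, \cG \otimes \alpha) = 0$ for all $i > 0$ and all $\alpha \in \Pic^0(A)$, i.e. that $\cG$ is $\mathrm{IT}_0$. By Mukai's structure theory of semihomogeneous bundles \cite{Mukai1978:SemihomogeneousVectorBundles} there is an isogeny $p \colon B \to A$ with $p^\ast \cG \cong \bigoplus_j M_j$ a direct sum of line bundles; as $\mu(\cG)$ is ample (equivalently $\det \cG$ is ample) and all $M_j$ carry the numerical class $p^\ast \mu(\cG)$, each $M_j$ is ample on $B$. Ample line bundles on abelian varieties have index $0$, so $M_j \otimes p^\ast \alpha$ is again ample with $\rH^i(B, M_j \otimes p^\ast \alpha) = 0$ for $i > 0$. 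Since $p$ is a separable isogeny, the trace map splits $\cG$ off $p_\ast p^\ast \cG$ as a direct summand, and Leray together with the affineness of $p$ identifies $\rH^i(A, \cG \otimes \alpha)$ with a direct summand of $\rH^i(B, p^\ast \cG \otimes p^\ast \alpha) = \bigoplus_j \rH^i(B, M_j \otimes p^\ast \alpha)$; hence it vanishes for $i > 0$. So $\cG$ is $\mathrm{IT}_0$, and in particular $M$-regular. (Alternatively, the $\mathrm{IT}_0$ property for semihomogeneous bundles of ample slope is part of Mukai's theory and could be cited directly.)

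Next I would invoke the regularity formalism of Pareschi and Popa. An $M$-regular sheaf on an abelian variety is continuously globally generated, and an ample line bundle---here $\cO_A(\Theta)$---is continuously globally generated as well. Their multiplicativity result then states that the tensor product of a continuously globally generated sheaf with a continuously globally generated line bundle is honestly globally generated. Applying this to $\cG$ and $\cO_A(\Theta)$ shows that $\cF = \cG \otimes \cO_A(\Theta)$ is globally generated, as desired.

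The main obstacle is the middle step: turning the determinant hypothesis into the vanishing $\mathrm{IT}_0$. This is where semihomogeneity is essential---ampleness of a general vector bundle on an abelian variety does not force $\mathrm{IT}_0$---and it rests on the isogeny trivialization of $\cG$ together with the index theorem for ample line bundles and the direct-summand descent along $p$. Once $\mathrm{IT}_0$ (hence $M$-regularity, hence continuous global generation) is in hand, the passage to global generation of $\cF$ is a formal consequence of the Pareschi--Popa product theorem; the only remaining bookkeeping is to confirm that twisting by $\cO_A(-\Theta)$ preserves semihomogeneity and that an ample line bundle is continuously globally generated.
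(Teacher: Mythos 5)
Your overall strategy is the same as the paper's: twist to $\cG = \cF(-\Theta)$, show $\cG$ is $\rM$-regular, and conclude via the Pareschi--Popa formalism ($\rM$-regular $\Rightarrow$ continuously globally generated, and the product of a continuously globally generated sheaf with a continuously globally generated line bundle is globally generated). The paper does exactly this, except that it simply cites \cite[Prop.~2.6]{KuronyaMustopa2020:ContinuousCMregularitySemihomogeneous} for the $\rM$-regularity of $\cF(-\Theta)$ and of $\cO_A(\Theta)$, and then applies \cite[Theorem 2.4]{PareschiPopa2003:RegularityAbelianVarieties}. Where you differ is that you try to prove the $\rM$-regularity (via $\mathrm{IT}_0$) from scratch, and there your key structural claim is false: it is \emph{not} true that every semihomogeneous bundle splits as a direct sum of line bundles after pullback by an isogeny. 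Mukai proves such a splitting for \emph{simple} semihomogeneous bundles; a general semihomogeneous bundle is only an iterated extension of simple ones of the same slope (see \pref{prop:AV sh csst} and its proof), and nonsplit extensions survive isogeny pullback. Concretely, on an elliptic curve $E$ take $\cG = F_2 \otimes L$, where $F_2$ is the Atiyah bundle (the nonsplit self-extension of $\cO_E$) and $L$ is ample: then $\cG$ is semihomogeneous with ample determinant, but for any isogeny $p$ the map $p^\ast$ on $\rH^1(E,\cO_E)$ is injective (it is split, up to the factor $\deg(p)$, by the very trace map you invoke), so $p^\ast F_2$ is again the Atiyah bundle on the source, which is indecomposable; hence $p^\ast \cG$ is never a direct sum of line bundles.

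The gap is repairable, and the repair explains why your conclusion is nonetheless correct: filter $\cG$ by subbundles whose graded pieces are simple semihomogeneous of the same ample slope class $\mu(\cG)$ (Mukai; compare the filtration in \tref{thm:projectively flat} and \pref{prop:AV sh csst}). Your isogeny-plus-trace argument applies verbatim to each simple graded piece and shows it is $\mathrm{IT}_0$; since $\mathrm{IT}_0$ is preserved under extensions by the long exact cohomology sequence, $\cG$ itself is $\mathrm{IT}_0$, hence $\rM$-regular. This, in effect, is the content of the K\"uronya--Mustopa result the paper cites; I would not attribute the $\mathrm{IT}_0$ statement directly to Mukai, as your parenthetical fallback suggests. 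The remaining steps---the twist preserving semihomogeneity, ampleness of each summand from $\mu(\cG)$ being ample, ample line bundles being continuously globally generated, and the Pareschi--Popa multiplicativity---are all fine and coincide with the paper's use of \cite[Theorem 2.4]{PareschiPopa2003:RegularityAbelianVarieties}.
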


\begin{proof}
Since line bundles are semihomogeneous, also the twist $\cF(-\Theta)$ is semihomogeneous. By 
\cite[Prop. 2.6]{KuronyaMustopa2020:ContinuousCMregularitySemihomogeneous} the bundle $\cF(-\Theta)$ and the ample line bundle $\dO_A(\Theta)$ are $\rM$-regular coherent sheafs (Mukai regular in the sense of Pareschi and Popa \cite{PareschiPopa2003:RegularityAbelianVarieties}). Then \cite[Theorem 2.4]{PareschiPopa2003:RegularityAbelianVarieties} shows that 
$\cF \cong \cF(-\Theta) \otimes \dO_A(\Theta)$ is globally generated as claimed.
\end{proof}

Recall \pref{prop:AV sh csst} saying that curve semistable vector bundles on $A$ are the same as semihomogeneous vector bundles. 

\begin{thm}
	\label{thm:proj-semihom}
	Let $A$ be an abelian variety of dimension $g \geq 1$, and let $\cE$ be a curve semistable 
	bundle on $A$ of rank $r \geq 2$.  Then the convex Fujita number of $X = \bP(\cE)$ satisfies
	\[
	r \leq \conFN(\bP(\cE)) \leq r+1.
	\]
	More precisely, the following holds.
	\begin{enumerate}[align=left,labelindent=0pt,leftmargin=*]
	\item 
	\label{propitem:FNonPEoverAVsemihom1}
	If the denominator of $\mu(\cE)$ is less than $r$, then $\conFN(\bP(\cE)) = r$.
	\item
	\label{propitem:FNonPEoverAVsemihom2} 
	If $\conFN(A) = 0$, i.e. all ample line bundles on $A$ are globally generated, then $\conFN(\bP(\cE)) = r$.
	\item 
	\label{propitem:FNonPEoverAVsemihom3} 
	If there is a line bundle $\cM$ on $A$ such that $\det(\cE) \otimes \cM^{\otimes r}$ is ample but not globally generated, then $\conFN(\bP(\cE)) = r+1$.
\end{enumerate}	
\end{thm}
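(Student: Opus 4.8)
The plan is to combine the relative global-generation criterion of \lref{lem:FNonPEuppperboundCriterion} with the positivity analysis of the pushforward sheaf in \pref{prop:positivity of F}, exploiting that $\omega_A \cong \cO_A$ on an abelian variety. The lower bound $r \leq \conFN(\bP(\cE))$ is \pref{prop:FNonPElowerbound}. For the upper bound, fix $t \geq r$ and ample line bundles $\cL_i = \pi^\ast \cM_i(a_i)$, and set $\cF = \pi_\ast(\omega_{X/A} \otimes \cL)$; since $\omega_A \cong \cO_A$, \lref{lem:FNonPEuppperboundCriterion} reduces global generation of $\omega_X \otimes \cL$ to that of $\cF$. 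By \pref{prop:AV sh csst} the bundle $\cE$ is semihomogeneous, hence so is $\cF = \Sym^{a-r}(\cE) \otimes \det(\cE) \otimes \cM$, being built from $\cE$ by symmetric powers, determinants and tensoring with line bundles. For $t > r$, \pref{prop:positivity of F}\eqref{propitem:F larger than ample} supplies an ample $\Theta$ with $\cF(-\Theta)$ of ample determinant, so \lref{lem:semihom-gg} shows $\cF$ is globally generated; this already yields $\conFN(\bP(\cE)) \leq r+1$. The three refined statements all concern the borderline case $t = r$.

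For part (1), if the denominator of $\mu(\cE)$ is $< r$, then condition (i) of the critical case in \pref{prop:positivity of F}\eqref{propitem:F larger than ample at r} can never hold, so for every choice with $t = r$ there is still an ample $\Theta$ with $\cF(-\Theta)$ of ample determinant; \lref{lem:semihom-gg} then forces $\cF$, and hence $\omega_X \otimes \cL$, to be globally generated. Thus $\conFN(\bP(\cE)) \leq r$, and equality follows from the lower bound.

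Part (2) is the main obstacle, because in the critical case \pref{prop:positivity of F}\eqref{propitem:critical case} only produces an ample $\Theta$ with $\mu(\cF(-\Theta)) = 0$, so $\cF(-\Theta)$ has numerically trivial determinant and \lref{lem:semihom-gg} no longer applies; the required positivity has to be recovered by hand. I would set $\cG \coloneqq \cF \otimes \Theta^{-1}$, a semihomogeneous bundle with $\mu(\cG) = 0$, so $\det \cG \in \Pic^0(A)$. Comparing determinants in the defining isomorphisms $t_x^\ast \cG \cong \cG \otimes \alpha(x)$ and using that $\det\cG \in \Pic^0(A)$ is translation invariant forces $\alpha(x)^{\otimes \rank \cG} \cong \cO_A$; since the multiplier $x \mapsto \alpha(x)$ varies algebraically with values in the finite group $\hat A[\rank\cG]$ and $\alpha(0) = \cO_A$, it is identically trivial, so $\cG$ is homogeneous. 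By the structure theory of homogeneous bundles, $\cG$ admits a filtration by line bundles in $\Pic^0(A)$; twisting back by $\Theta$ produces a filtration of $\cF$ whose graded pieces are of the form $P_j \otimes \Theta$ with $P_j \in \Pic^0(A)$. Each such piece is ample, hence globally generated because $\conFN(A) = 0$, and as an ample line bundle on an abelian variety has vanishing higher cohomology. A straightforward induction up the filtration then shows $\cF$ is globally generated: at each stage the subsheaf has vanishing $\rH^1$ (so global sections surject onto those of the next quotient), and global generation of both sub and quotient gives global generation of the extension. Hence $\omega_X \otimes \cL$ is globally generated even in the critical case, giving $\conFN(\bP(\cE)) = r$.

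For part (3) I would produce an explicit adjoint bundle at $t = r$ that fails to be globally generated. Given $\cM$ with $\det(\cE) \otimes \cM^{\otimes r}$ ample but not globally generated, take $t = r$ and $\cL_i = \pi^\ast \cM(1)$ for every $i$; these are ample by \pref{prop:NEFconePE}, since $\cM^{\otimes r} \otimes \det(\cE)$ is ample. A direct computation with \eqref{eq:canonical bundle formula} and $\omega_A \cong \cO_A$ gives $\omega_X \otimes \cL \cong \pi^\ast(\det(\cE) \otimes \cM^{\otimes r})$. As $\pi^\ast \cN$ is globally generated on $\bP(\cE)$ if and only if $\cN$ is globally generated on $A$ (because $\pi_\ast \cO_X = \cO_A$), the bundle $\omega_X \otimes \cL$ is not globally generated. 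Therefore $\conFN(\bP(\cE)) > r$, and combined with the upper bound this yields $\conFN(\bP(\cE)) = r+1$.
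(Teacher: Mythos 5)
Your treatment of the lower bound, the upper bound, part (1), and part (3) is essentially identical to the paper's proof: the same reduction via \lref{lem:FNonPEuppperboundCriterion}, the same use of \pref{prop:positivity of F} together with \lref{lem:semihom-gg} (noting $\omega_A \cong \cO_A$), and for (3) the same choice $\cL_i = \pi^\ast\cM(1)$ giving $\omega_X \otimes \cL \cong \pi^\ast(\det(\cE)\otimes\cM^{\otimes r})$. Part (2) is where you genuinely diverge, and your route is correct but different. In the critical case the paper cites Miyaoka's Theorem~4.17 to filter $\cF(-\Theta)$ by degree-zero line bundles, and then invokes polystability of $\Sym^{a-r}(\cE)$ (Ramanan--Ramanathan, via Huybrechts--Lehn) to force this filtration to split, so that $\cF$ is a \emph{direct sum} of ample line bundles, each globally generated since $\conFN(A)=0$. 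You instead show that $\cG = \cF(-\Theta)$ is \emph{homogeneous} (numerically trivial determinant forces the semihomogeneity multipliers to be torsion, and rigidity kills them), filter it by $\Pic^0(A)$ using the structure theory of homogeneous bundles, and then, rather than splitting the filtration, run an induction on the filtration using that ample line bundles on an abelian variety are globally generated (by the hypothesis $\conFN(A)=0$) and have vanishing $\rH^1$; the extension step (sub and quotient globally generated plus $\rH^1$ of the sub vanishing implies the extension is globally generated) is correct. This trades the polystability input for a cohomological argument: your version never needs stability of $\cE$ or Ramanan--Ramanathan, and is self-contained within abelian-variety theory, while the paper's version yields the stronger structural conclusion that $\cF$ actually splits as a direct sum of ample line bundles.

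One step you should tighten: the phrase ``the multiplier $x \mapsto \alpha(x)$ varies algebraically'' is not literally meaningful, since $\alpha(x)$ is not unique. The rigorous formulation uses Mukai's closed algebraic subgroup $\Phi(\cG) = \{(x,\alpha) : t_x^\ast\cG \cong \cG\otimes\alpha\} \subseteq A \times \hat{A}$: your determinant computation shows that the image of $\Phi(\cG)$ in $\hat{A}$ lies in the finite group $\hat{A}[\rank\cG]$, so the identity component $\Phi(\cG)^0$ has trivial second projection; since $\Phi(\cG)$ surjects onto $A$ and has finitely many components, $\Phi(\cG)^0$ also surjects onto $A$, whence $A \times \{\cO_A\} \subseteq \Phi(\cG)$ and $\cG$ is homogeneous. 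This is exactly your argument made precise, so it is an informality rather than a gap in substance.
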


\begin{proof}
The lower bound for $\conFN(\bP(\cE))$ was established in \pref{prop:FNonPElowerbound}. The upper bound will be deduced from \lref{lem:FNonPEuppperboundCriterion} and we thus keep the notation of this lemma and its proof. If $t > r$, then \pref{prop:positivity of F} provides an ample divisor $\Theta$ such that $\cF(-\Theta)$ has ample determinant. Now \lref{lem:semihom-gg} shows that $\cF$ is globally generated. As $\cF = \omega_A \otimes \cF$, the upper bound follows. 

\smallskip

\eqref{propitem:FNonPEoverAVsemihom1} If the denominator of $\mu(\cE)$ is less than $r$, then we are not in the critical case in the terminology of \pref{prop:positivity of F}. So  the same argument as above shows $\conFN(\bP(\cE)) = r$.

\smallskip

\eqref{propitem:FNonPEoverAVsemihom2} We only need to show global generation for $\omega_X \otimes \cL$ in the critical case, in particular $\cE$ is stable. By \pref{prop:positivity of F} we have $\mu(\cF(-\Theta)) = 0$ in the critical case, with $\Theta$ an ample divisor. By \cite[Thm.~4.17]{Miyaoka1987:ChernClassesKodaira} the bundle $\cF(-\Theta)$ is an iterated extension of line bundles of degree $0$. Thus $\cF$ is an iterated extension of line bundles $\cF_\alpha$ numerically equivalent to $\Theta$ and therefore ample.   
On the other hand,  \cite[3.2.11]{HuybrechtsLehn2010:GeometryModuliSpaces}, or more precisely \cite{RamananRamanathan1984:RemarksInstabilityFlag},
shows that $\Sym^{a-r}(\cE)$ is polystable. Hence $\cF$ is also polystable. Both properties together show that $\cF = \bigoplus_{\alpha} \cF_\alpha$ is a direct sum of ample line bundles. By assumption, all ample line bundles on $A$ are globally generated. Thus also $\cF$ is globally generated. This again shows 
$\conFN(\bP(\cE)) = r$.

\smallskip

\eqref{propitem:FNonPEoverAVsemihom3} We pick all $\cL_i = \pi^\ast \cM(1)$ for $i = 1, \ldots, r$. These line bundles are ample by  \pref{prop:NEFconePE} and by assumption. The sheaf $\cF$ in this case becomes the line bundle
\[
\cF =  \det(\cE) \otimes \cM^{\otimes r}.
\]
The surjective map $\pi^\ast \cF \to \omega_X \otimes \cL$ of line bundles is then an isomorphism. Thus $\omega_X \otimes \cL \simeq \pi^\ast \cF$ is globally generated if and only if $\cF$ is globally generated, which we assume it is not. 
\end{proof}

\begin{rmk}
Part \eqref{propitem:FNonPEoverAVsemihom3} of \tref{thm:proj-semihom} can be rewritten with the twist $\cE \otimes \cM$ as the new vector bundle $\cE$ on $A$ giving rise to the same projective bundle. Then  \eqref{propitem:FNonPEoverAVsemihom3} says: if $\cE$ is a curve semistable vector bundle of rank $r \geq 2$ on the abelian variety $A$ with $\det(\cE)$ ample but not globally generated, then $\cL = \dO_{\bP(\cE)}(1)$ on $X = \bP(\cE)$ provides an ample line bundle such that $\omega_X \otimes \cL^{\otimes r}$ is not globally generated and therefore $\conFN(X) = r+1$.
\end{rmk}

\begin{rmk}
\tref{thm:FNofPE Yusufs result} and \tref{thm:proj-semihom} both cover the case of an elliptic curve as a base. Let $\cE$ be a semistable vector bundle of degree $d$ and rank $r$ on an elliptic curve, then $(d,r)=1$ is equivalent to $\mu(\cE)$ having denominator $r$. So \tref{thm:FNofPE Yusufs result} \eqref{propitem:FNonPEoverC1} and 
 \tref{thm:proj-semihom} \eqref{propitem:FNonPEoverAVsemihom1} deal with the same cases. 
 Next, \tref{thm:FNofPE Yusufs result}\eqref{propitem:FNonPEoverC2}\ref{propitem:FNonPEoverC2i}  and 
\tref{thm:proj-semihom}  \eqref{propitem:FNonPEoverAVsemihom2} both never apply. 
Finally, \tref{thm:FNofPE Yusufs result}\eqref{propitem:FNonPEoverC2}\ref{propitem:FNonPEoverC2ii}  and 
\tref{thm:proj-semihom}  \eqref{propitem:FNonPEoverAVsemihom3} both again deal with the same cases and provide the same reason for $\conFN(\bP(\cE)) = r+1$.
\end{rmk}

\subsection{Projective bundles of curve semistable bundles and the Albanese map}
\label{sec:sh bundle on Albanese finite} 

In this section we will work with curve semistable vector bundles on a base $S$ such that line bundles are determined by line bundles on an abelian variety. Concretely, we ask that there is a map $\alpha \colon S \to A$ to an abelian variety $A$ such that $\alpha^\ast: \Pic(A) \to \Pic(S)$ is an isomorphism. It turns out that $\alpha$ must be finite and the Albanese map. Indeed, the map $\alpha$ factors over the Albanese map and the factorization $h: \Alb_S \to A$ is surjective with $h^\ast: \Pic^0(A) \to \Pic^0(\Alb_S) = \Pic^0(S)$ an isomorphism. Hence $h$ is an isogeny with trivial kernel, so $A = \Alb_S$. 

\begin{defi}
\label{defi:Albanese-finite}
A variety \textbf{\Albanesefinite}  is a smooth projective variety $S$ such that $\alpha^\ast \colon \Pic(\Alb_S) \to \Pic(S)$ is an isomorphism for the Albanese map $\alpha: S \to  \Alb_S$.
\end{defi}

\begin{lem}
Let $S$ be a variety \Albanesefinite. Then the Albanese map is a finite map.
\end{lem}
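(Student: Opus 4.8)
The plan is to show that the proper morphism $\alpha \colon S \to \Alb_S$ is quasi-finite; since a proper morphism with finite fibres is finite, this suffices. First I would record that both $S$ and $\Alb_S$ are smooth projective, so $\alpha$ is automatically proper, and the only thing at issue is whether $\alpha$ contracts a positive-dimensional subvariety.

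The heart of the argument is to rule out contracted curves. Suppose for contradiction that some fibre of $\alpha$ has positive dimension; then it contains an irreducible complete curve $C \subseteq S$ that $\alpha$ maps to a single point. For any line bundle $\cL$ on $\Alb_S$, the restriction of $\alpha^\ast \cL$ to $C$ is the pullback of $\cL$ along the constant map $C \to \{\mathrm{pt}\}$, hence trivial; thus $(\alpha^\ast \cL \cdot C) = 0$. Now I would invoke the defining hypothesis that $\alpha^\ast \colon \Pic(\Alb_S) \to \Pic(S)$ is an isomorphism, in particular surjective. Fixing an ample line bundle $\cH$ on $S$, surjectivity gives $\cH \cong \alpha^\ast \cL$ for some $\cL \in \Pic(\Alb_S)$, so the previous computation yields $(\cH \cdot C) = 0$. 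This contradicts ampleness of $\cH$, which forces $(\cH \cdot C) > 0$ for every curve $C$. Therefore no fibre is positive-dimensional, so $\alpha$ is quasi-finite, and being proper it is finite.

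I expect no serious obstacle here: the only background facts needed are that a proper quasi-finite morphism is finite and that a positive-dimensional proper fibre contains a complete curve, both entirely standard. The real content is the one-line observation that surjectivity of $\alpha^\ast$ transports the ample class $\cH$ onto any would-be contracted curve, where it would have to vanish --- an impossibility. In other words, abelian Picard type already encodes enough positivity to preclude contractions, which is precisely what finiteness of the Albanese map demands.
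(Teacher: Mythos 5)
Your proof is correct and is essentially the same as the paper's: both arguments take an ample line bundle on $S$, use surjectivity of $\alpha^\ast \colon \Pic(\Alb_S) \to \Pic(S)$ to realize it as a pullback, and conclude that any curve contracted by $\alpha$ would have zero intersection with an ample class, a contradiction, so that $\alpha$ is quasi-finite and hence finite. Your write-up merely makes explicit the standard background steps (a positive-dimensional proper fibre contains a complete curve; proper plus quasi-finite implies finite) that the paper leaves implicit.
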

\begin{proof}
If there is curve in a fibre of the Albanese map, then its intersection number with an ample line bundle  $\cL$ must be positive.  On the one hand, because $\cL$ is the pull back of a line bundle on the Albanese variety, the intersection number also vanishes.  This shows that the Albanese map is quasi-finite, and hence finite. 
\end{proof}

\begin{ex}
\label{ex:abelianPicType1}
Cyclic covers of abelian varieties of dimension at least $4$ that are totally branched along a smooth ample divisor are examples of varieties \Albanesefinite, see  \cite{AngehrnSiu1995:EffectiveFreenessPoint}*{\S4.2}. Further examples are constructed by iterating this branched covering construction. 

In these examples of cyclic branched covers the Albanese map is also surjective and an isomorphism of fundamental groups, hence the fundamental group is abelian. 
\end{ex}

\begin{ex}
Another source of examples of varieties \Albanesefinite\ is provided by smooth hyperplane sections $S$ in abelian varieties $A$ of dimension at least $\dim(S) \geq 3$. The restriction to $S$ is an isomorphism on Picard groups by the Lefschetz hyperplane theorem for $\Pic(-)$, see  \cite{Grothendieck1965:CohomologieLocaleFaisceaux}*{Exp.~XII Cor.~3.6}. In these examples the Lefschetz hyperplane theorem also shows that the Albanese map is an isomorphism on fundamental groups. 
\end{ex}

\begin{prop}
\label{prop:gg for csst on Albanese-finite by PP theory}
Let $S$ be a variety \Albanesefinite. We further assume that  the  Albanese map is surjective or the Picard rank of $S$ equals $1$. Let $\cF$ be a curve semistable vector bundle on $S$ and $\Theta$ an ample divisor such that  $\mu(\cF(-\Theta))$  is an ample rational class. Then $\omega_S \otimes \cF$ is globally generated.
\end{prop}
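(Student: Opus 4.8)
The plan is to push everything forward along the finite Albanese map and to reduce the statement to Pareschi--Popa regularity on the abelian variety $A := \Alb_S$. Write $\alpha\colon S \to A$ for the Albanese map, which is finite as shown above, and set $\cG := \omega_S \otimes \cF$. The first reduction is that it suffices to prove $\alpha_\ast \cG$ is globally generated on $A$: for a finite morphism the counit $\alpha^\ast \alpha_\ast \cG \to \cG$ is surjective (it is already surjective on the direct summand $\cG_{x}$ of each stalk $(\alpha_\ast \cG)_{\alpha(x)}$), so pulling back the evaluation surjection of $\alpha_\ast\cG$ and composing with the counit exhibits $\ev_\cG \colon H^0(S,\cG)\otimes \cO_S \to \cG$ as a surjection, using $H^0(S,\cG) = H^0(A,\alpha_\ast\cG)$.

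Next I would set up the twist. Since $S$ is \Albanesefinite, we have $\cO_S(\Theta) = \alpha^\ast \cL_A$ for a unique $\cL_A \in \Pic(A)$, and the hypothesis that $\alpha$ is surjective or that $S$ (equivalently $A$) has Picard rank one guarantees that $\cL_A$ is ample: in the surjective case ampleness descends along the finite surjection $\alpha$, while if $\NS(A)$ has rank one the sign of the class of $\cL_A$ is forced by that of the ample class $\alpha^\ast\cL_A = \cO_S(\Theta)$. By the projection formula
\[
\alpha_\ast \cG \;=\; \alpha_\ast\big(\omega_S \otimes \cF(-\Theta)\big) \otimes \cL_A \;=:\; \cH \otimes \cL_A .
\]
Following the proof of \lref{lem:semihom-gg}, it then suffices to show that $\cH$ is $\rM$-regular, since $\cL_A$ is ample, hence $\rM$-regular, and the tensor product of two $\rM$-regular sheaves is globally generated by \cite[Theorem~2.4]{PareschiPopa2003:RegularityAbelianVarieties}.

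The heart of the argument is the regularity of $\cH$, and this is where the positivity hypothesis enters. The bundle $\cF(-\Theta)$ is again curve semistable (curve semistability is stable under tensoring by a line bundle), and its determinant has class $r\,\mu(\cF(-\Theta))$, which is ample by assumption; hence $\cF(-\Theta)$ is Nakano positive by \pref{prop:ample css implies Nakano}. For any $P \in \Pic^0(A)$ the line bundle $\alpha^\ast P \in \Pic^0(S)$ carries a flat unitary metric, so $\cF(-\Theta) \otimes \alpha^\ast P$ is still Nakano positive, and the Nakano vanishing theorem (see \cite{Kobayashi1987:DifferentialGeometryComplex}) gives $H^i\big(S, \omega_S \otimes \cF(-\Theta) \otimes \alpha^\ast P\big) = 0$ for all $i > 0$. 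Because $\alpha$ is finite we have $R^i\alpha_\ast = 0$ for $i>0$ and $\alpha^\ast \colon \Pic^0(A) \xrightarrow{\sim} \Pic^0(S)$, so this computes
\[
H^i(A, \cH \otimes P) \;=\; H^i\big(S, \omega_S \otimes \cF(-\Theta) \otimes \alpha^\ast P\big) \;=\; 0
\]
for all $i > 0$ and all $P \in \Pic^0(A)$. Thus $\cH$ satisfies $\mathrm{IT}_0$; in particular all cohomological support loci $V^i(\cH)$ with $i>0$ are empty, so $\cH$ is $\rM$-regular. Combined with the previous paragraph, $\alpha_\ast \cG = \cH \otimes \cL_A$ is globally generated, whence $\cG = \omega_S \otimes \cF$ is globally generated.

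The main obstacle is the regularity step: one must upgrade the single Nakano vanishing for $\cF(-\Theta)$ to vanishing for every $\Pic^0$-twist. This relies on the fact that numerically trivial line bundles on the projective manifold $S$ admit flat metrics, so that Nakano positivity is preserved under such twists, and on correctly transporting the resulting vanishing through the finite map via $R^i\alpha_\ast = 0$ together with the identification $\alpha^\ast\colon \Pic^0(A) \cong \Pic^0(S)$. The second delicate point, where the extra hypothesis (surjectivity of $\alpha$, or Picard rank one) is genuinely used, is the descent of ampleness from $\cO_S(\Theta)$ to $\cL_A$ on $A$.
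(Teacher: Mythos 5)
Your proof is correct and follows essentially the same route as the paper: reduce along the finite Albanese map, establish $\rM$-regularity of $\alpha_\ast\big(\omega_S \otimes \cF(-\Theta)\big)$ via Nakano positivity and Nakano vanishing for all $\Pic^0$-twists, descend the ampleness of $\Theta$ to $\Alb_S$ using the surjectivity or Picard rank one hypothesis, and conclude with \cite[Theorem 2.4]{PareschiPopa2003:RegularityAbelianVarieties} together with finiteness of $\alpha$. The only immaterial difference is in handling the twist: the paper notes that $\cF(-\Theta)\otimes\alpha^\ast\lambda$ is again curve semistable with the same ample slope class and re-applies \pref{prop:ample css implies Nakano} to it, whereas you apply that proposition once to $\cF(-\Theta)$ and transport Nakano positivity through the twist via a flat unitary metric on $\alpha^\ast\lambda$.
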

\begin{proof}
Let $\alpha \colon S \to A$ be the Albanese morphism. Let $\lambda \in \Pic^0(A)$ be arbitrary. The twist $\cF(-\Theta) \otimes \alpha^\ast \lambda$ is also curve semistable. Since $\mu(\cF(-\Theta))$ is ample,  \pref{prop:ample css implies Nakano} shows that $\cF(-\Theta) \otimes \alpha^\ast \lambda$ is Nakano positive. Therefore $\omega_S \otimes \cF(-\Theta) \otimes \alpha^\ast \lambda$ has no higher cohomology. Hence $\alpha_\ast\big(\omega_S \otimes \cF(-\Theta) \big)$ is $\rM$-regular.

By assumption the line bundle $\Theta$ is of the form $\alpha^\ast(\tilde{\Theta})$. The divisor $\tilde{\Theta}$ is  necessarily ample on $A$ due to the Albanese map being finite surjective or the Picard rank being equal to $1$. 
Now ample divisors on abelian varieties are $\rM$-regular. Then \cite[Theorem 2.4]{PareschiPopa2003:RegularityAbelianVarieties} shows that 
\[
\alpha_\ast\big(\omega_S \otimes \cF(-\Theta) \big) \otimes \dO_A(\tilde{\Theta}) = \alpha_\ast\big(\omega_S \otimes \cF(-\Theta) \otimes \alpha^\ast\dO_A(\tilde{\Theta})\big) = \alpha_\ast(\omega_S \otimes \cF)
\]
is globally generated on $A$. Because $\alpha$ is a finite map, also $\omega_S \otimes \cF$ is globally generated.
\end{proof}

\begin{thm}
	\label{thm:FNonPE-Albanesefinite}
	Let $S$ be a variety \Albanesefinite. We further assume that  the  Albanese map is surjective or the Picard rank of $S$ equals $1$. Let $\cE$ be a curve semistable bundle on $S$ of rank $r \geq 2$ with associated projective bundle $\pi \colon X = \bP(\cE) \to S$. Then
	\[
	r \leq \conFN(X) \leq r+1
	\]
	and the following more precise statements hold:
\begin{enumerate}[align=left,labelindent=0pt,leftmargin=*]
	\item 
	\label{propitem:FNonPEoverAlbanesefinite1}
	If the denominator of $\mu(\cE)$ is less than $r$, then $\conFN(\bP(\cE)) = r$.
	\item 
	\label{propitem:FNonPEoverAlbanesefinite2} 
	If $\conFN(S) \leq 1$ and $\pi_1(S)$ is abelian, then $\conFN(\bP(\cE)) = r$.
	\item
	\label{propitem:FNonPEoverAlbanesefinite3} 	
	If there is a line bundle $\cM$ on $S$ such that $\det(\cE) \otimes \cM^{\otimes r}$ is ample but not globally generated, then $\conFN(\bP(\cE)) = r+1$.
\end{enumerate}
\end{thm}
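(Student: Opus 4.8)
The plan is to follow the template of \tref{thm:proj-semihom}, replacing the Pareschi--Popa input for abelian varieties by \pref{prop:gg for csst on Albanese-finite by PP theory}, which is precisely the tool available for a base of abelian Picard type under the stated hypothesis on the Albanese map or the Picard rank. The lower bound $\conFN(X)\geq r$ is immediate from \pref{prop:FNonPElowerbound}. For the upper bound I would invoke \lref{lem:FNonPEuppperboundCriterion}: fixing $t\geq r$ ample line bundles $\cL_i=\pi^\ast\cM_i(a_i)$ and setting $\cF=\Sym^{a-r}(\cE)\otimes\det(\cE)\otimes\cM$ as there, it suffices to globally generate $\omega_S\otimes\cF$. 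Since $\cE$ is curve semistable and curve semistability is preserved by symmetric powers, determinants and tensoring with line bundles (see \S\ref{sec:curve semistable}), the bundle $\cF$ is again curve semistable. For $t>r$, \pref{prop:positivity of F}\eqref{propitem:F larger than ample} yields an ample $\Theta$ with $\cF(-\Theta)$ of ample determinant, so $\mu(\cF(-\Theta))$ is an ample rational class; \pref{prop:gg for csst on Albanese-finite by PP theory} then gives global generation of $\omega_S\otimes\cF$, and hence of $\omega_X\otimes\cL$. This proves $\conFN(X)\leq r+1$.

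For the refined statements I would analyse the remaining case $t=r$. Assertion \eqref{propitem:FNonPEoverAlbanesefinite1} is the easy one: if the denominator of $\mu(\cE)$ is $<r$ we are never in the critical case of \pref{prop:positivity of F}\eqref{propitem:F larger than ample at r}, so again $\cF(-\Theta)$ has ample determinant and the same argument gives global generation of $\omega_X\otimes\cL$ for every $t\geq r$; with the lower bound this forces $\conFN(X)=r$. For \eqref{propitem:FNonPEoverAlbanesefinite2} the only configurations not already covered are the critical ones, where by \pref{prop:positivity of F}\eqref{propitem:critical case} there is an ample $\Theta$ with $\mu(\cF(-\Theta))=0$; here $\cE$ is stable, so $\Sym^{a-r}(\cE)$ is polystable by \cite[3.2.11]{HuybrechtsLehn2010:GeometryModuliSpaces}, whence $\cG:=\cF(-\Theta)$ is a polystable curve semistable bundle of slope class $0$. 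By \cref{cor:JordanHoelder by stable hermitian projectively flat}, $\cG=\bigoplus_i\cG_i$ splits into stable hermitian projectively flat bundles with $\mu(\cG_i)=0$. Because each $\cG_i$ has numerically trivial determinant, its projectively flat hermitian structure can be rescaled to a flat one, so $\cG_i$ is a genuinely flat bundle, i.e.\ a unitary representation of $\pi_1(S)$; stability makes this representation irreducible, and an irreducible unitary representation of the abelian group $\pi_1(S)$ is one-dimensional. Hence every $\cG_i$ is a numerically trivial line bundle, $\cF=\bigoplus_i\cG_i\otimes\dO_S(\Theta)$ is a direct sum of line bundles numerically equivalent to $\Theta$ and therefore ample, and $\omega_S\otimes\cF=\bigoplus_i\omega_S\otimes(\cG_i\otimes\dO_S(\Theta))$ is globally generated because $\conFN(S)\leq 1$. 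Together with \lref{lem:FNonPEuppperboundCriterion} this gives global generation of $\omega_X\otimes\cL$ in the critical case as well, so $\conFN(X)=r$.

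For \eqref{propitem:FNonPEoverAlbanesefinite3} I would exhibit a single failing configuration at level $t=r$. Take $\cL_i=\pi^\ast\cM(1)$ for $i=1,\dots,r$; by \pref{prop:NEFconePE} these are ample precisely because $\det(\cE)\otimes\cM^{\otimes r}$ is ample. With $a=r$ the canonical bundle formula \eqref{eq:canonical bundle formula} gives
\[
\omega_X\otimes\cL\;\cong\;\pi^\ast\big(\omega_S\otimes\det(\cE)\otimes\cM^{\otimes r}\big),
\]
and since $\pi_\ast\dO_X=\dO_S$ with $\pi$ surjective, this pullback is globally generated if and only if the line bundle $\omega_S\otimes\det(\cE)\otimes\cM^{\otimes r}$ is globally generated on $S$. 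Under the hypothesis it is not, so $\conFN(X)>r$; combined with the upper bound this yields $\conFN(X)=r+1$. (On an abelian variety $\omega_S$ is trivial and this reduces to \tref{thm:proj-semihom}\eqref{propitem:FNonPEoverAVsemihom3}.)

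The main obstacle is the critical-case analysis in \eqref{propitem:FNonPEoverAlbanesefinite2}: over an abelian base one simply quotes Miyaoka's structure theorem to split $\cF(-\Theta)$ into degree-zero line bundles, but over a general base of abelian Picard type this must be replaced by the flatness argument above, whose two delicate points are (i) upgrading the projectively flat hermitian structure on each stable factor of slope $0$ to an honest flat structure using the numerical triviality of its determinant, and (ii) using the abelianness of $\pi_1(S)$ to force irreducible unitary summands to be one-dimensional. The hypotheses that $\pi_1(S)$ be abelian and that $\conFN(S)\leq 1$ enter precisely here.
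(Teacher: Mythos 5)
Your proposal reproduces the paper's proof essentially step for step: the same reduction via \lref{lem:FNonPEuppperboundCriterion}, the same use of \pref{prop:positivity of F} together with \pref{prop:gg for csst on Albanese-finite by PP theory} for the upper bound and for assertion (1), the same critical-case analysis for assertion (2) (the paper first identifies the filtration quotients of $\cF(-\Theta)$ as flat line bundles using abelianness of $\pi_1(S)$ and then splits $\cF$ by polystability, while you split first via polystability plus \cref{cor:JordanHoelder by stable hermitian projectively flat} and then identify the summands --- same ingredients, interchangeable order), and the same failing configuration $\cL_i = \pi^\ast\cM(1)$ for assertion (3).

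One caveat on part (3), which your more explicit write-up actually exposes. Your displayed isomorphism $\omega_X\otimes\cL \cong \pi^\ast\bigl(\omega_S\otimes\det(\cE)\otimes\cM^{\otimes r}\bigr)$ shows that what must fail to be globally generated is $\omega_S\otimes\det(\cE)\otimes\cM^{\otimes r}$, whereas the hypothesis of (3) asserts non--global generation of $\det(\cE)\otimes\cM^{\otimes r}$. Your sentence ``Under the hypothesis it is not'' is therefore a non sequitur whenever $\omega_S$ is nontrivial --- and for varieties \Albanesefinite\ it generally is: for an ample divisor $S$ in an abelian variety, or for a cyclic cover branched along an ample divisor, $\omega_S$ is ample. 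This is not a defect relative to the paper: its proof of (3) is, verbatim, ``the same proof as \tref{thm:proj-semihom}~\eqref{propitem:FNonPEoverAVsemihom3}'', which treats only the case $\omega_A\cong\dO_A$, so it leaves exactly the same step unjustified. The hypothesis that matches the argument both of you give is ``$\det(\cE)\otimes\cM^{\otimes r}$ ample and $\omega_S\otimes\det(\cE)\otimes\cM^{\otimes r}$ not globally generated''; to keep the hypothesis as stated one would need an additional argument passing from one non--global generation statement to the other, which neither you nor the paper supplies.
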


\begin{proof}
The general estimate $r \leq \conFN(\bP(\cE)) \leq r+1$ and assertion \eqref{propitem:FNonPEoverAVsemihom1}  follow as in the proof of \tref{thm:proj-semihom} simply substituting 
\pref{prop:gg for csst on Albanese-finite by PP theory}  for  \lref{lem:semihom-gg}.

\smallskip

In the proof of assertion \eqref{propitem:FNonPEoverAVsemihom2} we also follow the proof of \tref{thm:proj-semihom} \eqref{propitem:FNonPEoverAVsemihom2}. We again find an ample $\Theta$ with $\mu(\cF(-\Theta)) = 0$. The simple curve semistable filtration quotients of $\cF(-\Theta)$ can be endowed with a hermitian flat metric, and not just a projectively flat one as in \tref{thm:projectively flat} \ref{rmkitem:projectivelyflatfiltration}. The corresponding unitary representation is irreducible and thus of dimension $1$ since $\pi_1(S)$ is assumed to be abelian. Thus $\cF$ is an iterated extension of line bundles $\cF_\alpha$ numerically equivalent to $\Theta$ and therefore ample.  As in the proof of \tref{thm:proj-semihom} we deduce that $\cF$ is in fact isomorphic to $\bigoplus \cF_\alpha$. Then $\omega_S \otimes \cF$ is globally generated because $\conFN(S) \leq 1$.  As usual,  \lref{lem:FNonPEuppperboundCriterion} completes the proof. 

\smallskip

Finally, assertion \eqref{propitem:FNonPEoverAVsemihom3} has the same proof as \tref{thm:proj-semihom} \eqref{propitem:FNonPEoverAVsemihom3}.
\end{proof}

\begin{rmks}
\begin{enumerate}
	\item
	Note that following the proof of \tref{thm:FNonPE-Albanesefinite} the case $r=1$ yields 
	$\conFN(S) \leq 2$ for $S$  \Albanesefinite\ and furthermore at least one of the two: 
	surjective Albanese map or Picard rank equal to $1$. 
	\item
	If $S$ is as in Example~\ref{ex:abelianPicType1} and the degree of the cyclic cover is 
	non-trivial (of degree at least $2$), then  
	\cite{ChenKuronyaMustopaStix2023:ConvexFujitaFundgp}*{Proposition~4.6} shows $\conFN(S) \leq 1$, because abelian varieties have convex Fujita number bounded above by $2$. In particular, then $\conFN(\bP(\cE)) =  r$ for curve semistable vector bundles $\cE$ of rank $r$ on such $S$. 
\end{enumerate}
\end{rmks}

\bibliographystyle{alpha}

\begin{bibdiv}
\begin{biblist}

\bib{AngehrnSiu1995:EffectiveFreenessPoint}{article}{
      author={Angehrn, Urban},
      author={Siu, Yum~Tong},
       title={Effective freeness and point separation for adjoint bundles},
        date={1995},
        ISSN={0020-9910},
     journal={Inventiones Mathematicae},
      volume={122},
      number={2},
       pages={291\ndash 308},
         url={https://mathscinet.ams.org/mathscinet-getitem?mr=1358978},
      review={\MR{1358978}},
}

\bib{BiswasBruzzo2008:SemistablePrincipalBundles}{article}{
      author={Biswas, Indranil},
      author={Bruzzo, Ugo},
       title={On {{Semistable Principal Bundles}} over a {{Complex Projective
  Manifold}}},
        date={2008-01},
        ISSN={1073-7928},
     journal={International Mathematics Research Notices},
      volume={2008},
       pages={rnn035},
         url={https://doi.org/10.1093/imrn/rnn035},
}

\bib{BruzzoHernandezRuiperez2006:SemistabilityVsNefness}{article}{
      author={Bruzzo, Ugo},
      author={Hern{\'a}ndez~Ruip{\'e}rez, Daniel},
       title={Semistability vs. nefness for ({{Higgs}}) vector bundles},
        date={2006},
        ISSN={0926-2245},
     journal={Differential Geometry and its Applications},
      volume={24},
      number={4},
       pages={403\ndash 416},
         url={https://mathscinet.ams.org/mathscinet-getitem?mr=2231055},
      review={\MR{2231055}},
}

\bib{BalajiKollar2008:HolonomyGroupsStable}{article}{
      author={Balaji, Vikraman},
      author={Koll{\'a}r, J{\'a}nos},
       title={Holonomy groups of stable vector bundles},
        date={2008},
        ISSN={0034-5318},
     journal={Kyoto University. Research Institute for Mathematical Sciences.
  Publications},
      volume={44},
      number={2},
       pages={183\ndash 211},
         url={https://mathscinet.ams.org/mathscinet-getitem?mr=2426347},
      review={\MR{2426347}},
}

\bib{BauerSzemberg1996:TensorProductsAmple}{article}{
      author={Bauer, Thomas},
      author={Szemberg, Tomasz},
       title={On tensor products of ample line bundles on abelian varieties},
        date={1996},
        ISSN={0025-5874},
     journal={Mathematische Zeitschrift},
      volume={223},
      number={1},
       pages={79\ndash 85},
         url={https://mathscinet.ams.org/mathscinet-getitem?mr=1408863},
      review={\MR{1408863}},
}

\bib{Butler1994:NormalGenerationVector}{article}{
      author={Butler, David~C.},
       title={Normal generation of vector bundles over a curve},
        date={1994},
        ISSN={0022-040X},
     journal={Journal of Differential Geometry},
      volume={39},
      number={1},
       pages={1\ndash 34},
         url={https://mathscinet.ams.org/mathscinet-getitem?mr=1258911},
      review={\MR{1258911}},
}

\bib{ChenKuronyaMustopaStix2023:ConvexFujitaFundgp}{misc}{
      author={Chen, Jiaming},
      author={K{\"u}ronya, Alex},
      author={Mustopa, Yusuf},
      author={Stix, Jakob},
       title={Convex {{Fujita}} numbers and the fundamental group},
   publisher={arXiv},
        date={2023},
      number={arXiv:2301.06367},
         url={http://arxiv.org/abs/2301.06367},
}

\bib{ChenKuronyaMustopaStix2023:ConvexFujitaSurfaces}{misc}{
      author={Chen, Jiaming},
      author={K{\"u}ronya, Alex},
      author={Mustopa, Yusuf},
      author={Stix, Jakob},
       title={Convex {{Fujita}} numbers and the {{Kodaira-Enriques}}
  classification of surfaces},
   publisher={arXiv},
        date={2023},
      number={arXiv:2310.16987},
         url={http://arxiv.org/abs/2310.16987},
}

\bib{EinLazarsfeld1993:GlobalGenerationPluricanonical}{article}{
      author={Ein, Lawrence},
      author={Lazarsfeld, Robert},
       title={Global generation of pluricanonical and adjoint linear series on
  smooth projective threefolds},
        date={1993},
        ISSN={0894-0347},
     journal={Journal of the American Mathematical Society},
      volume={6},
      number={4},
       pages={875\ndash 903},
         url={https://mathscinet.ams.org/mathscinet-getitem?mr=1207013},
      review={\MR{1207013}},
}

\bib{Fujita1987:PolarizedManifoldsWhose}{article}{
      author={Fujita, Takao},
       title={On {{Polarized Manifolds Whose Adjoint Bundles Are Not
  Semipositive}}},
        date={1987-01},
     journal={Algebraic Geometry, Sendai, 1985},
      volume={10},
       pages={167\ndash 179},
  url={https://projecteuclid-org.proxy.ub.uni-frankfurt.de/ebooks/advanced-studies-in-pure-mathematics/Algebraic-Geometry-Sendai-1985/chapter/On-Polarized-Manifolds-Whose-Adjoint-Bundles-Are-Not-Semipositive/10.2969/aspm/01010167},
}

\bib{GhidelliLacini2021:LogarithmicBoundsFujita}{misc}{
      author={Ghidelli, Luca},
      author={Lacini, Justin},
       title={Logarithmic bounds on {{Fujita}}'s conjecture},
   publisher={arXiv},
        date={2021},
      number={arXiv:2107.11705},
         url={http://arxiv.org/abs/2107.11705},
}

\bib{Grothendieck1965:CohomologieLocaleFaisceaux}{book}{
      editor={Grothendieck, Alexander},
       title={Cohomologie locale des faisceaux coh{\'e}rents et
  th{\'e}or{\`e}mes de {{Lefschetz}} locaux et globaux. {{Fasc}}. {{I}}:
  {{Expos{\'e}s}} 1--8; {{Fasc}}. {{II}}: {{Expos{\'e}s}} 9--13},
   publisher={Institut des Hautes {\'E}tudes Scientifiques, Paris},
        date={1965},
         url={https://mathscinet.ams.org/mathscinet-getitem?mr=0210718},
      review={\MR{0210718}},
}

\bib{Hartshorne1970:AmpleSubvarietiesAlgebraic}{book}{
      author={Hartshorne, Robin},
       title={Ample subvarieties of algebraic varieties},
      series={Lecture {{Notes}} in {{Mathematics}}, {{Vol}}. 156},
   publisher={Springer-Verlag, Berlin-New York},
        date={1970},
         url={https://mathscinet.ams.org/mathscinet-getitem?mr=0282977},
      review={\MR{0282977}},
}

\bib{Heier2002:EffectiveFreenessAdjoint}{article}{
      author={Heier, Gordon},
       title={Effective freeness of adjoint line bundles},
        date={2002},
        ISSN={1431-0635},
     journal={Documenta Mathematica},
      volume={7},
       pages={31\ndash 42},
         url={https://mathscinet.ams.org/mathscinet-getitem?mr=1911209},
      review={\MR{1911209}},
}

\bib{Helmke1997:FujitaConjecture}{article}{
      author={Helmke, Stefan},
       title={On {{Fujita}}'s conjecture},
        date={1997-06},
        ISSN={0012-7094, 1547-7398},
     journal={Duke Mathematical Journal},
      volume={88},
      number={2},
       pages={201\ndash 216},
         url={http://projecteuclid.org/euclid.dmj/1077241575},
      review={\MR{MR1455517}},
}

\bib{HuybrechtsLehn2010:GeometryModuliSpaces}{book}{
      author={Huybrechts, Daniel},
      author={Lehn, Manfred},
       title={The geometry of moduli spaces of sheaves},
     edition={Second},
      series={Cambridge {{Mathematical Library}}},
   publisher={Cambridge University Press, Cambridge},
        date={2010},
        ISBN={978-0-521-13420-0},
      review={\MR{2665168}},
}

\bib{JahnkeRadloff2013:SemistabilityRestrictedTangent}{article}{
      author={Jahnke, Priska},
      author={Radloff, Ivo},
       title={Semistability of restricted tangent bundles and a question of
  {{I}}. {{Biswas}}},
        date={2013},
        ISSN={0129-167X},
     journal={International Journal of Mathematics},
      volume={24},
      number={1},
       pages={1250122, 15},
         url={https://mathscinet.ams.org/mathscinet-getitem?mr=3030068},
      review={\MR{3030068}},
}

\bib{Kawamata1997:FujitaFreenessConjecture}{article}{
      author={Kawamata, Yujiro},
       title={On {{Fujita}}'s freeness conjecture for 3-folds and 4-folds},
        date={1997},
        ISSN={0025-5831},
     journal={Mathematische Annalen},
      volume={308},
      number={3},
       pages={491\ndash 505},
         url={https://mathscinet.ams.org/mathscinet-getitem?mr=1457742},
      review={\MR{1457742}},
}

\bib{KuronyaMustopa2020:ContinuousCMregularitySemihomogeneous}{article}{
      author={K{\"u}ronya, Alex},
      author={Mustopa, Yusuf},
       title={Continuous {{CM-regularity}} of semihomogeneous vector bundles},
        date={2020},
        ISSN={1615-715X},
     journal={Advances in Geometry},
      volume={20},
      number={3},
       pages={401\ndash 412},
         url={https://mathscinet.ams.org/mathscinet-getitem?mr=4121341},
      review={\MR{4121341}},
}

\bib{KuronyaMustopa2021:EffectiveGlobalGeneration}{article}{
      author={K{\"u}ronya, Alex},
      author={Mustopa, Yusuf},
       title={Effective global generation on varieties with numerically trivial
  canonical class},
        date={2021-06},
     journal={arXiv:1810.07079 [math]},
      eprint={1810.07079},
         url={http://arxiv.org/abs/1810.07079},
}

\bib{Kobayashi1982:CurvatureStabilityVector}{article}{
      author={Kobayashi, Shoshichi},
       title={Curvature and stability of vector bundles},
        date={1982-01},
        ISSN={0386-2194},
     journal={Proceedings of the Japan Academy, Series A, Mathematical
  Sciences},
      volume={58},
      number={4},
       pages={158\ndash 162},
  url={https://projecteuclid.org/journals/proceedings-of-the-japan-academy-series-a-mathematical-sciences/volume-58/issue-4/Curvature-and-stability-of-vector-bundles/10.3792/pjaa.58.158.full},
}

\bib{Kobayashi1987:DifferentialGeometryComplex}{book}{
      author={Kobayashi, Shoshichi},
       title={Differential geometry of complex vector bundles},
      series={Publications of the {{Mathematical Society}} of {{Japan}}},
   publisher={Princeton University Press, Princeton, NJ; Princeton University
  Press, Princeton, NJ},
        date={1987},
      volume={15},
        ISBN={978-0-691-08467-1},
         url={https://mathscinet.ams.org/mathscinet-getitem?mr=909698},
      review={\MR{909698}},
}

\bib{Lazarsfeld2004:PositivityAlgebraicGeometry}{book}{
      author={Lazarsfeld, Robert},
       title={Positivity in algebraic geometry. {{I}}},
      series={Ergebnisse der {{Mathematik}} und ihrer {{Grenzgebiete}}. 3.
  {{Folge}}. {{A Series}} of {{Modern Surveys}} in {{Mathematics}} [{{Results}}
  in {{Mathematics}} and {{Related Areas}}. 3rd {{Series}}. {{A Series}} of
  {{Modern Surveys}} in {{Mathematics}}]},
   publisher={Springer-Verlag, Berlin},
        date={2004},
      volume={48},
        ISBN={978-3-540-22533-1},
      review={\MR{2095471}},
}

\bib{Miyaoka1987:ChernClassesKodaira}{article}{
      author={Miyaoka, Yoichi},
       title={The {{Chern Classes}} and {{Kodaira Dimension}} of a {{Minimal
  Variety}}},
        date={1987-01},
     journal={Algebraic Geometry, Sendai, 1985},
      volume={10},
       pages={449\ndash 477},
  url={https://projecteuclid.org/ebooks/advanced-studies-in-pure-mathematics/Algebraic-Geometry-Sendai-1985/chapter/The-Chern-Classes-and-Kodaira-Dimension-of-a-Minimal-Variety/10.2969/aspm/01010449},
}

\bib{MisraRay2022:NefConesProjective}{article}{
      author={Misra, Snehajit},
      author={Ray, Nabanita},
       title={Nef cones of projective bundles over surfaces and {{Seshadri}}
  constants},
        date={2022},
        ISSN={0030-6126},
     journal={Osaka Journal of Mathematics},
      volume={59},
      number={3},
       pages={639\ndash 651},
         url={https://mathscinet.ams.org/mathscinet-getitem?mr=4450682},
      review={\MR{4450682}},
}

\bib{Mukai1978:SemihomogeneousVectorBundles}{article}{
      author={Mukai, Shigeru},
       title={Semi-homogeneous vector bundles on an {{Abelian}} variety},
        date={1978},
        ISSN={0023-608X},
     journal={Journal of Mathematics of Kyoto University},
      volume={18},
      number={2},
       pages={239\ndash 272},
         url={https://mathscinet.ams.org/mathscinet-getitem?mr=498572},
      review={\MR{498572}},
}

\bib{Nakayama1999:NormalizedTautologicalDivisors}{incollection}{
      author={Nakayama, Noboru},
       title={Normalized tautological divisors of semi-stable vector bundles},
        date={1999},
   booktitle={S{\=u}rikaisekikenky{\=u}sho {{Koky{\=u}roku}}},
       pages={167\ndash 173},
         url={https://mathscinet.ams.org/mathscinet-getitem?mr=1715587},
      review={\MR{1715587}},
}

\bib{NarasimhanRamanan1969:ModuliVectorBundles}{article}{
      author={Narasimhan, M.~S.},
      author={Ramanan, S.},
       title={Moduli of vector bundles on a compact {{Riemann}} surface},
        date={1969},
        ISSN={0003-486X},
     journal={Annals of Mathematics. Second Series},
      volume={89},
       pages={14\ndash 51},
         url={https://mathscinet.ams.org/mathscinet-getitem?mr=242185},
      review={\MR{242185}},
}

\bib{PareschiPopa2003:RegularityAbelianVarieties}{article}{
      author={Pareschi, Giuseppe},
      author={Popa, Mihnea},
       title={Regularity on abelian varieties. {{I}}},
        date={2003},
        ISSN={0894-0347},
     journal={Journal of the American Mathematical Society},
      volume={16},
      number={MR1949161},
       pages={285\ndash 302 (electronic)},
  url={http://www.ams.org/mathscinet/search/publications.html?pg1=MR&s1=MR1949161},
}

\bib{Reider1988:VectorBundlesRank}{article}{
      author={Reider, Igor},
       title={Vector {{Bundles}} of {{Rank}} 2 and {{Linear Systems}} on
  {{Algebraic Surfaces}}},
        date={1988-03},
        ISSN={0003486X},
     journal={The Annals of MathematicsSecond Series},
      volume={127},
      number={2007055},
       pages={309\ndash 316},
      eprint={2007055},
         url={http://www.jstor.org/stable/2007055},
}

\bib{RamananRamanathan1984:RemarksInstabilityFlag}{article}{
      author={Ramanan, S.},
      author={Ramanathan, A.},
       title={Some remarks on the instability flag},
        date={1984-01},
        ISSN={0040-8735, 2186-585X},
     journal={Tohoku Mathematical Journal},
      volume={36},
      number={2},
       pages={269\ndash 291},
  url={https://projecteuclid.org/journals/tohoku-mathematical-journal/volume-36/issue-2/Some-remarks-on-the-instability-flag/10.2748/tmj/1178228852.full},
}

\bib{Umemura1973:ResultsTheoryVector}{article}{
      author={Umemura, Hiroshi},
       title={Some results in the theory of vector bundles},
        date={1973-01},
        ISSN={0027-7630},
     journal={Nagoya Mathematical Journal},
      volume={52},
      number={none},
       pages={97\ndash 128},
  url={https://projecteuclid.org/journals/nagoya-mathematical-journal/volume-52/issue-none/Some-results-in-the-theory-of-vector-bundles/nmj/1118794880.full},
}

\bib{YeZhu2020:FujitaFreenessConjecture}{article}{
      author={Ye, Fei},
      author={Zhu, Zhixian},
       title={On {{Fujita}}'s freeness conjecture in dimension 5},
        date={2020},
        ISSN={0001-8708},
     journal={Advances in Mathematics},
      volume={371},
       pages={107210, 56},
         url={https://mathscinet.ams.org/mathscinet-getitem?mr=4108218},
      review={\MR{4108218}},
}

\end{biblist}
\end{bibdiv}


\end{document}